\documentclass[10pt,dvipsnames]{amsart}

\usepackage[utf8]{inputenc}
\usepackage{lmodern}
\usepackage[T1]{fontenc}
\usepackage[english]{babel}

\usepackage{latexsym}
\usepackage{amsmath,amssymb,amsthm}
\usepackage{mathrsfs}
\usepackage{stmaryrd}

\usepackage{enumerate}
\usepackage{enumitem}

\usepackage{rotating}

\usepackage{todonotes}
\usepackage{tikz-cd}
\usetikzlibrary{shapes}
\usepackage{tikz}

\makeatletter
\DeclareRobustCommand{\rvdots}{%
	\vbox{
		\baselineskip4\p@\lineskiplimit\z@
		\kern-\p@
		\hbox{.}\hbox{.}\hbox{.}
}}
\makeatother

\usepackage{xcolor}
\usepackage{color}
\usepackage{hyperref}
\hypersetup{
	colorlinks=true,
	linkcolor=blue,
	citecolor=blue,
	urlcolor=blue
}

\usepackage{ulem}
\usepackage{soul}

\newtheorem{theoremAlph}{Theorem}
\newtheorem{corollaryAlph}[theoremAlph]{Corollary}

\newtheorem{theorem}{Theorem}[section]
\newtheorem{lemma}[theorem]{Lemma}	
\newtheorem{proposition}[theorem]{Proposition}
\newtheorem{corollary}[theorem]{Corollary}
\theoremstyle{definition}
\newtheorem{definition}[theorem]{Definition} 
\newtheorem{remark}[theorem]{Remark}

\theoremstyle{definition} 
\newtheorem*{ack}{Acknowledgements}

\numberwithin{equation}{section}

\newcommand{\C}{\mathbb{C}} 
\newcommand{\R}{\mathbb{R}} 
\newcommand{\Z}{\mathbb{Z}} 
\newcommand{\N}{\mathbb{N}} 

\newcommand{\Ric}{\textup{Ric}}

\newcommand{\ttimes}{\mathrel{\widetilde{\times} }}

\newcommand{\Bigslant}[2]{{\raisebox{0em}{$#1$}\left/\raisebox{-.2em}{$#2$}\right.}}
\newcommand{\bigslant}[2]{{\raisebox{0em}{$#1$}\big/\raisebox{-.2em}{$#2$}}}

\makeatletter
\DeclareRobustCommand*\uell{\mathpalette\@uell\relax}
\newcommand*\@uell[2]{
	\setbox0=\hbox{$#1\ell$}
	\setbox1=\hbox{\rotatebox{10}{$#1\ell$}}
	\dimen0=\wd0 \advance\dimen0 by -\wd1 \divide\dimen0 by 2
	\mathord{\lower 0.1ex \hbox{\kern\dimen0\unhbox1\kern\dimen0}}
}

\begin{document}
	\title[\rmfamily Positive Ricci Curvature on Twisted Suspensions]{\rmfamily Positive Ricci Curvature on Twisted Suspensions}
	\date{\today}
	\subjclass[2020]{53C20, 57R65, 57S15}
	\keywords{}
	\author{Philipp Reiser}
	\address{Department of Mathematics, University of Fribourg, Switzerland}
	\email{\href{mailto:philipp.reiser@unifr.ch}{philipp.reiser@unifr.ch}}
	\thanks{The author acknowledges funding by the SNSF-Project 200020E\textunderscore 193062 and the DFG-Priority programme SPP 2026.}
	
	\normalem
	
	\begin{abstract}
		The twisted suspension of a manifold is obtained by surgery along the fibre of a principal circle bundle over the manifold. It generalizes the spinning operation for knots and preserves various topological properties. In this article, we show that Riemannian metrics of positive Ricci curvature can be lifted along twisted suspensions. As an application we show that the maximal symmetry rank of a closed, simply-connected Riemannian manifold of positive Ricci curvature is $(n-2)$ in all dimensions $n\geq 4$. Further applications include simply-connected 6-manifolds whose homology has torsion, (rational) homology spheres in all dimensions at least 4, and manifolds with prescribed third homology.
	\end{abstract}

	\maketitle

	\section{Introduction and Main Results}
	
	Spinning, which was first introduced by Artin \cite{Ar25}, see e.g.\ \cite{Fr05}, is a useful tool in knot theory to construct a higher dimensional knot out of a given knot. A similar construction can be made for any given manifold. Roughly speaking, the \emph{$p$-spinning} of a manifold $M$ is the manifold obtained from $M\times S^p$ by surgery along $\{x\}\times S^p$ for some $x\in M$, see e.g.\ \cite{Su87,Su90}. One can further generalize this construction by replacing the product $M\times S^p$ by a linear sphere bundle with fibre $S^p$ and base $M$.
	
	In the special case $p=1$, the $1$-spinning operation is called \emph{suspension} in \cite{Du22} and denoted by $\Sigma_0 M$ due to its resemblance with the classical suspension operation for topological spaces. Indeed, the operation $\Sigma_0$ increases the dimension by $1$, preserves many of the topological properties of the original manifold and maps spheres to spheres. The corresponding generalized spinning operation was subsequently called the \emph{twisted suspension} in \cite{GR23} and denoted $\Sigma_e M$, where $e\in H^2(M;\Z)$ is the Euler class of the principal $S^1$-bundle over $M$ on which surgery is performed. In both articles \cite{Du22} and \cite{GR23} the (twisted) suspension naturally appears in the study of free circle actions.
	
	When considering these operations in the context of curvature, it was shown by Sha--Yang \cite{SY91} that Riemannian metrics of positive Ricci curvature can be lifted along a generalized $p$-spinning operation for any $p\geq 2$ and along the classical $1$-spinning operation, i.e.\ the suspension $\Sigma_0$. The main result of this article is the following theorem, which completes the Sha--Yang result to all generalized spinning operations.
	
	\begin{theoremAlph}\label{T:Twist_sus_Ric}
		Let $M$ be a closed manifold of dimension $n\geq 3$ that admits a Riemannian metric of positive Ricci curvature and let $e\in H^2(M;\Z)$. Then the twisted suspension $\Sigma_e M$ admits a Riemannian metric of positive Ricci curvature.
	\end{theoremAlph}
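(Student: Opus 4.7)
My plan is to construct a metric of positive Ricci curvature on $\Sigma_e M$ by combining a connection metric on the principal $S^1$-bundle with the Sha--Yang surgery technique along the fibre. Let $P\to M$ denote the principal $S^1$-bundle with Euler class $e$ and $\gamma\cong S^1$ the fibre along which the surgery is performed, so that $\Sigma_e M=(P\setminus\nu(\gamma))\cup_{\partial}(D^2\times S^{n-1})$, where $\nu(\gamma)\cong S^1\times D^n$ is a tubular neighbourhood.

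The first step is to place a suitable metric on $P\setminus\nu(\gamma)$. I would use a Kaluza--Klein connection metric: choose a principal connection on $P$ with curvature $\Omega$ representing $e$, and for small $\ell>0$ take the metric on $P$ under which $P\to M$ is a Riemannian submersion with horizontal space isometric to $(M,g)$ and totally geodesic fibres of length $2\pi\ell$. By O'Neill's formulas, the horizontal Ricci on $P$ is $\Ric_g-O(\ell^2)$, hence positive for small $\ell$, while the vertical Ricci is proportional to $\ell^2|\Omega|^2$ and is therefore nonnegative. When $e\neq 0$ one can often arrange $\Omega$ to be nowhere vanishing on a neighbourhood of $\gamma$ and obtain strict positivity there; when $e=0$ this term vanishes and further deformation is needed.

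The second step is to trivialize $\nu(\gamma)\cong S^1\times D^n$ and replace the metric there, interpolating smoothly from the Kaluza--Klein metric, by a doubly-warped product of the form $d\theta^2+dr^2+f(r)^2 g_{S^{n-1}}$ near the boundary. This extends smoothly across the surgery to a metric $h(\rho)^2 d\theta^2+d\rho^2+f(r_0)^2 g_{S^{n-1}}$ on $D^2\times S^{n-1}$, where $\rho$ is the radial coordinate on $D^2$ and $h(0)=0$, $h'(0)=1$. Following the classical Sha--Yang template, suitable choices of $f$ and $h$ give positive Ricci on the glued piece and match the second fundamental forms at the interface so that the global metric is $C^1$, and by standard smoothing arguments $C^\infty$, with positive Ricci preserved.

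The main obstacle is the interpolation in the collar between the connection metric and the warped-product model, where every Ricci eigenvalue must remain positive. In particular, in the case $e=0$ the bundle $P=M\times S^1$ has a flat direction and one must rely on the warping function $f$ to supply the missing positivity through its Hessian terms in the Ricci tensor; the key point is to choose $f$ concave enough to force positive vertical Ricci but with boundary behaviour compatible with smooth extension across $D^2\times S^{n-1}$. Producing a single construction that handles all Euler classes $e$ uniformly, and in particular the delicate case $e=0$, is the main technical step going beyond the Sha--Yang $p$-spinning result.
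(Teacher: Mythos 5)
Your outline captures the right ingredients (a connection metric on the $S^1$-bundle, a doubly-warped model on the surgery piece, matching in a collar), but the step you label as an "interpolation in the collar between the connection metric and the warped-product model" is precisely the step that does not go through as stated, and it is the main difficulty the paper singles out. Near the fibre $\gamma$, a local trivialization $D^n\times S^1\hookrightarrow P$ does \emph{not} carry the Kaluza--Klein metric to a product $d\theta^2+\pi^*g$ in general: the connection form restricted to the trivialization is $dt+\text{(something)}$, and the "something" is constrained by the cohomology class $e$ and cannot be made to vanish on a prescribed open set. Writing the model metric near the gluing region as $d\theta^2+dr^2+f(r)^2 g_{S^{n-1}}$ silently assumes the connection is flat (indeed trivial) there, which is exactly the "standard form" one cannot arrange. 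This is also why the remark that "when $e\neq0$ one can often arrange $\Omega$ to be nowhere vanishing on a neighbourhood of $\gamma$" is not available to you: $\Omega$ is not freely prescribable pointwise once $e$ is fixed, and in any case you need to kill the connection, not keep its curvature.

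The paper resolves this differently: it never brings the metric into a product form. On the surgery piece $[0,s_\lambda]\times S^{n-1}\times S^1$ it keeps a doubly-warped \emph{submersion} metric $\check g_f+e^{2\phi}A^2$, where $A$ is a connection form that is $dt$ only near $s=0$ (where the circle is collapsed) and, via a partition of unity, smoothly extends the fixed connection form coming from $P$ at $s=s_\lambda$. The "untwisting" therefore happens inside the connection form, not inside the metric. The Ricci formulas for such a metric (the paper's Lemma~\ref{L:g_Ric} specialized as in Proposition~\ref{P:g_Ric>0}) produce extra terms in $\mathcal{F}$ which are controlled by scaling the warping function $h$ of the $S^1$-factor down, after which the strict differential inequalities \eqref{INEQ:1}--\eqref{INEQ:3} (which must hold \emph{strictly}, a point where the Sha--Yang functions have to be modified) absorb them. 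Two further ingredients your sketch omits but that the paper needs are: (i) an initial deformation of the base metric to constant sectional curvature $1$ near the surgery point so the warping functions can be matched explicitly, and (ii) a final application of Ehrlich's deformation theorem to pass from non-negative to positive Ricci curvature globally, since the metric on the complement $P\setminus\varphi(D^n\times S^1)^\circ$ is only non-negatively Ricci curved (this covers in particular the case $e=0$, for which your Kaluza--Klein vertical Ricci term vanishes everywhere). Without these, and above all without a mechanism to untwist the connection rather than the metric, the argument does not close.
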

	
	In fact, we will show that for any $\ell\in\N_0$ the connected sum $\Sigma_e M\#_\ell(S^2\times S^{n-1})$ admits a Riemannian metric of positive Ricci curvature, see Corollary \ref{C:Twist_sus_conn_sum} below.
	
	Theorem \ref{T:Twist_sus_Ric} cannot be extended to the case $n=2$. Indeed, by Lemma \ref{L:Twist_susp_topology} below, the fundamental group of $\Sigma_0\R P^2$ is isomorphic to $\Z$, which, by the theorem of Bonnet--Myers, implies that this space does not admit a Riemannian metric of positive Ricci curvature.
	
	The main difficulty in the proof of Theorem \ref{T:Twist_sus_Ric}, when compared to the Sha--Yang result, lies in the fact that for a principal $S^1$-bundle it is not clear if there exists a submersion metric of non-negative Ricci curvature with totally geodesic fibres which is a product metric on a given local trivialization, i.e.\ whether it is possible to bring the metric into a \textquotedblleft standard form\textquotedblright\ on the part where the surgery is performed. We will therefore use a doubly warped submersion metric approach where we additionally need to \textquotedblleft untwist\textquotedblright\ the metric locally when performing the surgery.
	
	\begin{remark}
		In \cite{Du22} and \cite{GR23} another suspension operation, denoted $\Sigma_1$ and $\widetilde{\Sigma}_e$, respectively, is introduced, which is obtained by additionally twisting the normal bundle trivialization in the surgery process. This operation can be seen as an analogue of twist spinning for knots introduced by Zeeman \cite{Ze65}. We note that it remains open whether positive Ricci curvature can be lifted along these suspension operations. However, in some cases the suspensions $\Sigma_e$ and $\widetilde{\Sigma}_e$ in fact coincide, see Remark \ref{R:TW_TW_SUSP} below, showing that Theorem \ref{T:Twist_sus_Ric} extends to $\widetilde{\Sigma}_e$ in these cases.
	\end{remark}

	We can use Theorem \ref{T:Twist_sus_Ric} to construct examples of Riemannian manifolds of positive Ricci curvature with an isometric torus action of low cohomogeneity. Recall that the \emph{symmetry rank} of a Riemannian manifold, which was introduced by Grove and Searle \cite{GS94}, is the rank of its isometry group. It was asked in \cite{CG20} and \cite{Mo22} what the maximal symmetry rank for a closed, simply-connected $n$-dimensional Riemannian manifold of positive Ricci curvature is. By \cite[Corollary D]{CG20}, see also \cite[p.\ 23]{Mo22} and \cite[p.\ 3796]{Mo22a}, it is given by $(n-2)$ in dimensions $n=4,5,6$, and in dimension $n\geq 7$ it lies between $(n-4)$ and $(n-2)$. We can now use the twisted suspension to show that it is given by $(n-2)$ in all dimensions $n\geq 4$.

	For that we first note that the twisted suspension admits a circle action induced from the free circle action on the principal $S^1$-bundle used in its definition, and the metric constructed in the proof of Theorem \ref{T:Twist_sus_Ric} is invariant under this action. In particular, the manifold
	\[ \#_\ell (S^2\times S^2)\cong \Sigma_0S^3\#_\ell(S^2\times S^2) \]
	admits a Riemannian metric of positive Ricci curvature that is invariant under a circle action.	As pointed out by Michael Wiemeler, this fact can be used in conjunction with the lifting results of Gilkey--Park--Tuschmann \cite{GPT98} by taking principal torus bundles over this manifold to construct examples of closed, simply-connected Riemannian manifolds of positive Ricci curvature with an isometric torus action of cohomogeneity 3 in any dimension, showing that the maximal symmetry rank for a closed, simply-connected Riemannian manifold of positive Ricci curvature is at least $(n-3)$ in all dimensions at least $4$.
	
	To show that the maximal symmetry rank is given by $(n-2)$ we can now additionally lift an isometric torus action on a given manifold to the twisted suspension, provided this action has fixed points, see Theorem \ref{T:Twist_susp_Ric_equiv} below. We obtain the following result.
	
	\begin{theoremAlph}\label{T:cohomo_2}
		Let $n\geq 4$ and $k\in\N_0$ and assume that $k$ is even when $n=4$. Then there exists a closed simply-connected $n$-dimensional spin manifold $M$ with $b_2(M)=k$ that admits a Riemannian metric of positive Ricci curvature that is invariant under an effective action of a torus of dimension $(n-2)$, and if $n\geq5$ and $k\geq 1$ there also exists a non-spin manifold with these properties. In particular, any closed, 2-connected manifold that admits a torus action of cohomogeneity 2 admits a Riemannian metric of positive Ricci curvature that is invariant under this action.
	\end{theoremAlph}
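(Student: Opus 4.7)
The plan is to prove Theorem \ref{T:cohomo_2} by combining low-dimensional base cases with a single application of the equivariant twisted suspension and then iterated equivariant principal $S^1$-bundles. For the base case $n=4$, I would invoke the cohomogeneity-two $T^2$-invariant positive Ricci metrics on closed simply-connected $4$-manifolds from \cite{CG20}, which realize every admissible value of $b_2$ in both the spin and non-spin cases. For $n=5$, I would apply Theorem \ref{T:Twist_susp_Ric_equiv} to such a base $M^4$: the hypothesis that the $T^2$-action has a fixed point is satisfied, and because $M$ is simply connected every ordinary Euler class $e\in H^2(M;\Z)$ admits an equivariant lift (the Serre spectral sequence of the Borel fibration $M\to M_{T^2}\to BT^2$ degenerates in degree $2$, since $H^1(M)=0$ and $H^3(BT^2)=0$). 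Then the equivariant principal $S^1$-bundle $P\to M$ carries a $T^2$-action, and fibre surgery produces $\Sigma_e M^5$ with a $T^3$-invariant positive Ricci metric of cohomogeneity two.

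For $n\geq 6$, a further direct iteration of Theorem \ref{T:Twist_susp_Ric_equiv} is unavailable: an effective $T^r$-action on $\R^n$ must satisfy $2r\leq n$ at any fixed point, so no cohomogeneity-two torus action on a closed manifold of dimension $\geq 5$ can have a fixed point. To bridge this gap I would take iterated equivariant principal $S^1$-bundles over the $5$-dimensional model and invoke the lifting result of Gilkey--Park--Tuschmann \cite{GPT98}, which was already used in the introduction to obtain the cohomogeneity-three bound. Each equivariant principal $S^1$-bundle raises the dimension by one and the torus rank by one, preserving cohomogeneity two. Varying the Euler classes of the principal bundles and mixing in connected sums with copies of $S^2\times S^{n-1}$ as permitted by Corollary \ref{C:Twist_sus_conn_sum}, every admissible pair of Betti number and spin type can be realized: the Gysin sequence of each principal $S^1$-bundle controls the change in $b_2$, while the mod-$2$ reduction of the Euler class together with $w_2$ of the base controls the spin type. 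The parity constraint for $n=4$ reflects the fact that closed simply-connected spin $4$-manifolds have even $b_2$.

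The ``in particular'' statement then follows from the equivariant classification of closed $2$-connected torus manifolds of cohomogeneity two (due to Wiemeler and others): any such manifold is equivariantly diffeomorphic to one of the models constructed in the existence part, so the invariant positive Ricci metric transfers via the equivariant diffeomorphism.

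The main obstacle I anticipate is the bookkeeping required to realize every admissible triple of dimension, Betti number and spin type through consistent choices of Euler classes at each stage. A secondary subtlety, reflected in the hybrid character of the argument, is that equivariant twisted suspension is iterable at most once in the cohomogeneity-two regime and must therefore be complemented by the Gilkey--Park--Tuschmann construction for all further dimension increases.
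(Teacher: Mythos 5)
Your proposal follows essentially the same architecture as the paper's proof: handle the lowest dimensions by the equivariant twisted suspension (Theorem \ref{T:Twist_susp_Ric_equiv}) applied to a torus action with fixed points, then climb to higher dimensions via equivariant principal torus bundles and the Gilkey--Park--Tuschmann lifting of \cite{GPT98}. Two details diverge and deserve attention. First, the paper does not import the $n=4$ base case from \cite{CG20}: it builds the spin manifolds $\#_\ell(S^2\times S^2)\cong\Sigma_0 S^3\#_\ell(S^2\times S^2)$ directly from the circle action on $S^3$ via Theorem \ref{T:Twist_susp_Ric_equiv}, which is precisely why the $n=4$ clause of the theorem is restricted to spin with $b_2$ even. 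Second, because the paper's $n=5$ construction twists only the spin bases $\#_\ell(S^2\times S^2)$, it cannot reach the non-spin case $b_2=1$ in dimension $5$ this way (for $\ell=0$ the Euler class is forced to vanish, giving a spin total space) and must invoke a biquotient example from \cite{Pa04}, \cite{GS11}, \cite{ST04}; your proposal glosses over this case entirely. Your wider pool of $4$-dimensional bases would actually sidestep the biquotient — for instance $\Sigma_0\C P^2$ is non-spin with $b_2=1$ by Lemma \ref{L:Twist_susp_topology} — but you would need to say so explicitly. Finally, the phrase ``mixing in connected sums with copies of $S^2\times S^{n-1}$'' is imprecise: these connected sums come from Corollary \ref{C:Twist_sus_conn_sum} at the $5$-dimensional twisted-suspension stage (the paper simply takes the $5$-dimensional base $B$ with $b_2(B)=k+n-5$) and are not performed after the principal bundle iterations; and the ``in particular'' clause is proved in the paper by its own Proposition \ref{P:cohomo_b2}, not by citing an external classification.
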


	We note that the manifolds we construct in the proof of Theorem \ref{T:cohomo_2} are, without any assumptions on symmetries, already known to admit Riemannian metrics of positive Ricci curvature, see Remark \ref{R:Ric>0_conn_sum} below. However, to the best of our knowledge, Theorem \ref{T:cohomo_2} provides the first examples in all dimensions $n\geq 7$ of closed, simply-connected Riemannian $n$-manifolds of positive Ricci curvature with an isometric torus action of rank $(n-2)$.
	
	We now consider further applications of Theorem \ref{T:Twist_sus_Ric}. An immediate consequence is the following. By results of Bérard-Bergery \cite{BB78}, metrics of positive Ricci curvature can be lifted along principal $S^1$-bundles with non-trivial real Euler class (cf.\ Corollary \ref{C:g_Ric_tot-geod} below), see also the work by Gilkey--Park--Tuschmann \cite{GPT98}. It is open, however, if an analogous result holds if the base merely decomposes into a connected sum where each individual summand admits a Riemannian metric of positive Ricci curvature (and we assume that the base has finite fundamental group). It is also open whether connected sums of the total space with other manifolds of positive Ricci curvature are possible. We can now use Theorem \ref{T:Twist_sus_Ric} to cover the following special cases.
	\begin{corollaryAlph}
		\begin{enumerate}
			\item Let $M^{4m}$ be a $4m$-dimensional closed manifold with $m\geq 2$ that admits a Riemannian metric of positive Ricci curvature and let $e\in H^2(M;\Z)$. For $\ell\in\N$ let $P\to M\#_\ell (\pm\C P^{2m})$ be the principal $S^1$-bundle whose Euler class restricted to $M$ is $e$ and a generator on each $\pm\C P^{2m}$-summand. Then $P$ admits a Riemannian metric of positive Ricci curvature.
			\item Let $M^n$ be a closed, simply-connected manifold of dimension $n\geq 5$ that admits a Riemannian metric of positive Ricci curvature and let $P\to M$ be a principal $S^1$-bundle with primitive Euler class (or, equivalently, $P$ is simply-connected). Let $N$ be the product $S^2\times S^{n-1}$ if $M$ is non-spin and the non-trivial linear $S^{n-1}$-bundle over $S^2$ if $M$ is spin. Then, for any $\ell\in\N_0$, the manifold $P\#_\ell N$ admits a Riemannian metric of positive Ricci curvature.
		\end{enumerate}\label{C:S1-bundle}
	\end{corollaryAlph}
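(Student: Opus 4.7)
The plan is to reduce both parts of Corollary~\ref{C:S1-bundle} to Corollary~\ref{C:Twist_sus_conn_sum} (which follows Theorem~\ref{T:Twist_sus_Ric}), providing a positive Ricci metric on $\Sigma_e M\#_\ell(S^2\times S^{n-1})$ for every $\ell\in\N_0$. The crucial topological input, presumably contained in Lemma~\ref{L:Twist_susp_topology}, is the identification $\Sigma_e M\cong P\#N$ for primitive Euler class, with $N=S^2\times S^{n-1}$ when $M$ is non-spin and $N=S^2\ttimes S^{n-1}$ when $M$ is spin.

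For part (1), I would first observe that for $n\geq 3$ a principal $S^1$-bundle over a connected sum of $n$-manifolds is the fibre-sum of the bundles over the summands, under the natural splitting of $H^2$. Since the bundle over $\pm\C P^{2m}$ with generator Euler class is the Hopf bundle $S^{4m+1}$, and a fibre-sum with $S^{4m+1}$ is precisely a surgery on a fibre, $P$ is obtained from the principal $S^1$-bundle $P_0\to M$ with Euler class $e$ by $\ell$ surgeries on disjoint fibres. The first of these produces $\Sigma_e M$ by definition. The remaining $\ell-1$ surgeries act on circles that are null-homotopic in the current intermediate manifold (the fibres of $P_0$ are mutually homotopic, and the disk glued in at the first step provides a null-homotopy) with trivial normal bundles, so each contributes a connected-sum factor isomorphic to $S^2\times S^{4m-1}$ or to $S^2\ttimes S^{4m-1}$. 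As the factor is determined by an invariant in $\pi_1(SO(4m))=\Z/2$ intrinsic to $(M,e)$, the same factor $N'$ appears for every surgery, so $P\cong\Sigma_e M\#_{\ell-1}N'$. If $N'=S^2\ttimes S^{4m-1}$, then $\Sigma_e M$ is non-spin (already containing this summand from the first surgery), and I would apply the identity $X\#(S^2\times S^{4m-1})\cong X\#(S^2\ttimes S^{4m-1})$ for non-spin $X$ to convert all summands to the untwisted form. In either case $P\cong\Sigma_e M\#_{\ell-1}(S^2\times S^{4m-1})$, and Corollary~\ref{C:Twist_sus_conn_sum} supplies the desired metric.

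For part (2) with $\ell=0$, positive Ricci on $P$ is immediate from Bérard-Bergery's theorem (Corollary~\ref{C:g_Ric_tot-geod}), since the primitive Euler class is non-zero. For $\ell\geq 1$, I would apply Corollary~\ref{C:Twist_sus_conn_sum} to $\Sigma_e M\cong P\#N$, obtaining positive Ricci on $P\#N\#_{\ell-1}(S^2\times S^{n-1})$; it then remains to verify the diffeomorphism $N\#_{\ell-1}(S^2\times S^{n-1})\cong\#_\ell N$. This is immediate for $N=S^2\times S^{n-1}$, and for $N=S^2\ttimes S^{n-1}$ it follows by iterating the non-spin interchange identity on $P\#N\#_k(S^2\times S^{n-1})$, which is non-spin for every $k\geq 0$ since it carries the non-spin factor $N$.

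The principal technical obstacle is the topological identification $\Sigma_e M\cong P\#N$ with the correct matching of $N$ to the spin type of $M$. The natural framing of the fibre comes from horizontally lifting a fixed basis of $T_xM$ around the $S^1$-orbit, and it traces a loop in $SO(n)$ whose $\Z/2$-class governs whether the framing extends over a disk bounding the fibre in the simply-connected total space. The step I would expect to require the most care is showing that this class matches the spin dichotomy of $M$ exactly as demanded by the statement; this is what I would expect Lemma~\ref{L:Twist_susp_topology} to supply. With that identification in hand, both parts of the corollary follow essentially formally from Corollary~\ref{C:Twist_sus_conn_sum} together with the non-spin interchange identity for connected sums.
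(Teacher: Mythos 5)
Your approach to part (2) matches the paper's: both reduce to $\Sigma_e M\cong P\# N$ (which the paper cites as \cite[Theorem B]{GR23}), apply Corollary \ref{C:Twist_sus_conn_sum}, and use the non-spin interchange identity to convert $N\#_{\ell-1}(S^2\times S^{n-1})$ to $\#_\ell N$ (the paper cites \cite[Corollary 4.2]{GR23} for this). Your explicit separation of the $\ell=0$ case via B\'erard-Bergery is a reasonable touch, since the paper's displayed chain formally requires $\ell\geq 1$.

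For part (1), however, you take a genuinely different route, and it has a gap. The paper invokes \cite[Theorem A]{GR23} to obtain a connected-sum decomposition $P\cong\Sigma_e M\#\Sigma_{x_1}(\pm\C P^{2m})\#\dots\#\Sigma_{x_{\ell-1}}(\pm\C P^{2m})\# P_0$ and then \cite[Theorem B]{GR23} to pin down $\Sigma_{x_i}(\pm\C P^{2m})\cong S^2\times S^{4m-1}$ exactly. You attempt to re-derive this by tracking the surgeries directly, but two steps are not justified: (i) the claim that the first fibre surgery ``produces $\Sigma_e M$ by definition'' requires matching the fibre-sum gluing with the identity gluing in Definition \ref{D:twist_susp}, which is precisely part of what \cite[Theorem A]{GR23} proves and not automatic; and (ii) your justification ``if $N'=S^2\ttimes S^{4m-1}$, then $\Sigma_e M$ is non-spin (already containing this summand from the first surgery)'' is incorrect as stated: the first surgery is on a fibre of $P_0\to M$ that generates $\pi_1(P_0)$ and is typically \emph{not} null-homotopic, so it is not a connected-sum surgery and does not put an $S^2\ttimes S^{4m-1}$ summand inside $\Sigma_e M$. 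Your conclusion can in fact be rescued by a different argument — $w_2(P)=\pi^*w_2(\text{base})$ together with the Gysin sequence shows that $P$ is spin iff $w_2(M)\equiv e\bmod 2$, which by Lemma \ref{L:Twist_susp_topology}(2) is iff $\Sigma_e M$ is spin, so $S^2\ttimes S^{4m-1}$ summands force $\Sigma_e M$ non-spin — but that is not the reasoning you gave. The vague appeal to ``an invariant in $\pi_1(SO(4m))=\Z/2$ intrinsic to $(M,e)$'' is also not enough to conclude that the same summand $N'$ appears after each surgery, since the surgered manifold changes at each step. The paper's use of \cite[Theorems A and B]{GR23} sidesteps all of this.
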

	To prove Corollary \ref{C:S1-bundle} we will show that $P$ is diffeomorphic to $\Sigma_e M\#_{\ell-1}(S^2\times S^{4m-1})$ in (1) and that $\Sigma_e M$ is diffeomorphic to $P\# N$ in (2). We note that in (1) the metric constructed on $P$ is not invariant under the (free) $S^1$-action.
	
	Next we focus on dimension 6. A large class of examples of closed, simply-connected $6$-manifolds with a Riemannian metric of positive Ricci curvature was constructed in \cite{Re22a,Re23} and a list of all known examples is given in \cite[Section 5.1]{Re23}. However, none of these examples has torsion in its homology. We can now use Theorem \ref{T:Twist_sus_Ric} in combination with the work of Boyer--Galicki \cite{BG02,BG06a,BG08} and Kollár \cite{Ko05,Ko09} on positive Sasakian structures in dimension 5 to construct examples where infinitely many finite groups can be realized as the torsion group of the second homology.
	
	\begin{theoremAlph}\label{T:Hom_Sphere}
		\begin{enumerate}
			\item For every $k\in\N$ that is not a multiple of $30$ there exists a closed, simply-connected rational homology $6$-sphere $\Sigma^6$ with $H_2(\Sigma;\Z)\cong H_3(\Sigma;\Z)\cong \Z/k\oplus\Z/k$ and $H_4(\Sigma;\Z)=0$ such that for any $\ell\in\N_0$ the manifold
			\[ \Sigma\#_\ell(S^2\times S^4) \]
			admits a Riemannian metric of positive Ricci curvature.
			\item For any $k\in\N$ there exists a closed, simply-connected 6-manifold $M_k$, which we can assume to be either spin or non-spin, with $\textrm{Tors}(H_2(M_k;\Z))\cong\Z/k\oplus\Z/k$ such that for any $\ell\in\N_0$ the manifold
			\[ M_k\#_{\ell}(S^2\times S^4) \]
			admits a Riemannian metric of positive Ricci curvature.
		\end{enumerate}
	\end{theoremAlph}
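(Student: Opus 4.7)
The plan is to combine Theorem~\ref{T:Twist_sus_Ric} and Corollary~\ref{C:Twist_sus_conn_sum} with the existence of positive Sasakian structures on closed simply-connected $5$-manifolds established by Boyer--Galicki and Kollár, each such structure providing a Riemannian metric of positive Ricci curvature.

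For Part (1), I would invoke Kollár's theorem producing, for each $k\in\N$ not divisible by $30$, a closed simply-connected rational homology $5$-sphere $N$ with a positive Sasakian structure and $H_2(N;\Z)\cong \Z/k\oplus\Z/k$. Applying Theorem~\ref{T:Twist_sus_Ric} with trivial Euler class $e=0$ gives the untwisted suspension $\Sigma := \Sigma_0 N$, a closed $6$-manifold of positive Ricci curvature. A Mayer--Vietoris computation applied to the decomposition $\Sigma_0 N = ((N\setminus D^5)\times S^1)\cup_{S^1\times S^4}(D^2\times S^4)$, as recorded in Lemma~\ref{L:Twist_susp_topology}, then shows that $\Sigma$ is simply-connected with $H_2(\Sigma;\Z)\cong H_3(\Sigma;\Z)\cong \Z/k\oplus\Z/k$ and $H_4(\Sigma;\Z)=0$; the vanishing of the free parts comes from $N$ being a rational homology sphere. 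Corollary~\ref{C:Twist_sus_conn_sum} finally provides the metric of positive Ricci curvature on $\Sigma\#_\ell(S^2\times S^4)$ for every $\ell\in\N_0$.

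For Part (2), the key new input is removing the divisibility restriction on $k$. Here I would take $N$ to be a closed simply-connected $5$-manifold carrying a positive Sasakian structure, whose $H_2$ has torsion subgroup $\Z/k\oplus\Z/k$ but is allowed to contain a nontrivial free part; such examples are supplied by the Boyer--Galicki catalogue of links of weighted homogeneous hypersurface singularities, combined if necessary with Sha--Yang Ricci-positive connected sums with $S^2\times S^3$ to realise arbitrary $k$. Applying Theorem~\ref{T:Twist_sus_Ric} with an appropriately chosen Euler class $e\in H^2(N;\Z)$ produces the $6$-manifold $M_k := \Sigma_e N$ of positive Ricci curvature, and the topology lemma gives $\textrm{Tors}(H_2(M_k;\Z))\cong \Z/k\oplus\Z/k$. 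Since $w_2(M_k)$ is determined by $w_2(N)$ together with the mod-$2$ reduction of $e$, choosing $e$ in opposite parity classes produces both the spin and the non-spin case, and a second application of Corollary~\ref{C:Twist_sus_conn_sum} closes the argument.

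The main obstacle I expect lies in Part (2): producing, for every $k\in\N$, a positive Sasakian $5$-manifold whose torsion in $H_2$ is exactly $\Z/k\oplus\Z/k$. Kollár's classification only covers $k$ not divisible by $30$ in the rational homology sphere setting, so for the remaining $k$ the source manifolds must carry a nontrivial free part and require a careful selection from the broader Boyer--Galicki construction. A secondary point is checking that both parities of $w_2(M_k)$ are realisable, which amounts to verifying that an $e\in H^2(N;\Z)$ of each parity mod $2$ exists; this is automatic once $H^2(N;\Z)$ has a free summand, confirming that the flexibility gained by dropping the rational homology sphere condition suffices for both parts of the statement.
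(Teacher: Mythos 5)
Your treatment of Part (1) matches the paper's proof: take the Smale manifold $N_k$ (the closed simply-connected spin $5$-manifold with $H_2 \cong \Z/k\oplus\Z/k$), use the Boyer--Galicki/Koll\'ar positive Sasakian result for $k$ not a multiple of $30$, form $\Sigma_0 N_k$, compute the homology via Lemma~\ref{L:Twist_susp_topology} and \eqref{EQ:Twist_sus_hom}, and invoke Corollary~\ref{C:Twist_sus_conn_sum}.

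For Part (2) your overall plan (suspend a $5$-manifold with a nontrivial free part in $H_2$ and torsion $\Z/k\oplus\Z/k$, then vary the divisibility of the Euler class to toggle spin) is correct and is exactly what the paper does, but the mechanism you propose for producing the source manifold is where the argument breaks. You suggest, as a fallback, forming Sha--Yang type connected sums with $S^2\times S^3$; this cannot work when $k$ is a multiple of $30$, precisely because in that case $N_k$ itself is not known to admit a metric of positive Ricci curvature, so there is nothing to start the connected-sum gluing from. The actual input the paper uses is a sharper result of Boyer--Galicki and Koll\'ar, recorded as Theorem~\ref{T:pos_sasaki}(2), asserting that the specific manifold $N_k\#_{7}(S^2\times S^3)$ admits a positive Sasakian structure for \emph{every} $k\in\N$, with no divisibility restriction. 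This is a genuinely independent result coming from the classification of Fano orbifolds and Seifert $S^1$-bundles, not something you can assemble from the $k\nmid 30$ case plus Ricci-positive connected sums. You correctly identified this as the main obstacle; the point is that the catalogue you gesture at does in fact already contain the required examples, and no Sha--Yang fallback is available or needed. Once that reference is in place, your computation of $\mathrm{Tors}(H_2)$, the spin/non-spin distinction via the parity of $e$ (using that $N_k\#_7(S^2\times S^3)$ is spin so $\Sigma_e$ is spin iff $e$ has even divisibility by Lemma~\ref{L:Twist_susp_topology}), and the final application of Corollary~\ref{C:Twist_sus_conn_sum} all go through as you describe.
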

	
	When considering non-simply-connected rational homology spheres, lens spaces provide examples of rational homology spheres with cyclic fundamental group in odd dimensions. By taking twisted suspensions of lens spaces, we can construct such examples in even dimensions as well. More generally, we can take twisted suspensions of spherical space forms such as the Poincaré homology sphere. In the latter case, we even obtain an integral homology sphere.
	
	\begin{theoremAlph}
		\begin{enumerate}
			\item For any integer $k$ and any even dimension $n\geq 4$ there exists a rational homology sphere $M^n_k$ with $\pi_1(M_k)\cong\Z/k$ and $H_i(M_k;\Z)=0$ for all $2\leq i\leq n-1$ such that for any $\ell\in \N_0$ the manifold
			\[ M_k\#_{\ell}(S^2\times S^{n-2}) \]
			admits a Riemannian metric of positive Ricci curvature.
			\item For any $n\geq 4$ there exists a homology $n$-sphere $\Sigma^n$ whose fundamental group is the binary icosahedral group such that for any $\ell\in\N_0$ the manifold
			\[ \Sigma\#_\ell(S^2\times S^{n-2}) \]
			admits a Riemannian metric of positive Ricci curvature.
		\end{enumerate}\label{T:Space_forms}
	\end{theoremAlph}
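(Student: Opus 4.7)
The plan is to apply Theorem~A iteratively to spherical space forms.

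I would treat Part~(2) first, as it is cleaner. Start with the Poincar\'e homology $3$-sphere $\Sigma^3 = S^3/I^*$, which as a spherical space form carries a metric of constant positive sectional curvature. It is an integral homology $3$-sphere with $\pi_1 = I^*$; since $I^*$ is perfect, $H_1 = 0$ and hence $H^2(\Sigma^3;\Z) = 0$ by the universal coefficient theorem, so the only twisted suspension available is the untwisted one $\Sigma_0$. Define $\Sigma^n := \Sigma_0 \Sigma^{n-1}$ for $n\geq 4$, and prove by induction that $\Sigma^n$ is an integral homology $n$-sphere with $\pi_1(\Sigma^n)=I^*$. The inductive step is a Mayer--Vietoris computation on the decomposition
\[
\Sigma_0 \Sigma^{n-1} = \bigl((\Sigma^{n-1}\setminus D^{n-1})\times S^1\bigr)\cup_{S^{n-2}\times S^1}(D^2 \times S^{n-2}),
\]
together with the observation that the surgery curve $\{*\}\times S^1$ generates the $\Z$-factor of $\pi_1(\Sigma^{n-1}\times S^1) = I^* \times \Z$ and is killed by the surgery, so $\pi_1(\Sigma^n) = I^*$. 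Applying Theorem~A iteratively produces a metric of positive Ricci curvature on $\Sigma^n$, and Corollary~\ref{C:Twist_sus_conn_sum} (applied to the last suspension with base $\Sigma^{n-1}$) yields the same on $\Sigma^n \#_\ell (S^2 \times S^{n-2})$ for every $\ell\in\N_0$.

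For Part~(1), I would proceed analogously but start from a lens space. Set $M^n_k := \Sigma_e L^{n-1}_k$, where $L^{n-1}_k = S^{n-1}/(\Z/k)$ carries constant positive sectional curvature and $e \in H^2(L^{n-1}_k;\Z) \cong \Z/k$ is a generator. The associated principal $S^1$-bundle $P$ is the mapping torus of a generator $\sigma$ of the $\Z/k$-action on $S^{n-1}$. Since $\sigma$ acts by orientation-preserving isometries and hence trivially on $H_*(S^{n-1})$, the manifold $P$ is homotopy equivalent to $S^1\times S^{n-1}$ and satisfies $\pi_1(P) = \Z$. The $S^1$-fibre wraps the base of the mapping torus $k$ times, so it represents $k$ times a generator of $\pi_1(P)$; after the surgery one therefore obtains $\pi_1(M^n_k) = \Z/k$. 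A Mayer--Vietoris computation along the same lines as in Part~(2), now tracking the factor of $k$ arising from the intersection of the $S^1$-fibre with a fibre of the projection $P\to S^1$, shows that $M^n_k$ has the rational homology of the sphere with the required fundamental group. Theorem~A and Corollary~\ref{C:Twist_sus_conn_sum} then provide the Riemannian metric of positive Ricci curvature on $M^n_k \#_\ell(S^2\times S^{n-2})$.

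The main obstacle is the topological verification, and the key calculation is the Mayer--Vietoris connecting map $H_{n-1}(P)\to H_{n-2}(S^1\times S^{n-2})$, where the target is the homology of the boundary of a tubular neighborhood of the surgery circle. The degree of this map equals the intersection number of the $S^1$-fibre with a natural representative of a generator of $H_{n-1}(P)$, realized as a fibre of the mapping torus projection in the lens-space case and as $\{*\}\times S^3$ (after one iteration) in the Poincar\'e case; this intersection number is $k$ in Part~(1) and $1$ in Part~(2). Once this degree is known, the remaining homology groups are read off from the long exact sequence, in combination with Lemma~\ref{L:Twist_susp_topology} for the $\pi_1$ computation, and the metric assertions follow immediately from Theorem~A and Corollary~\ref{C:Twist_sus_conn_sum}.
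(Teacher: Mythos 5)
Your proposal uses the same construction as the paper: in (1), $M^n_k = \Sigma_e L^{n-1}_k$ with $e$ a generator of $H^2(L^{n-1}_k;\Z)\cong\Z/k$, and in (2), the iterated untwisted suspension $\Sigma_0\cdots\Sigma_0\Sigma^3$ of the Poincar\'e homology sphere, with the metric conclusion drawn from Corollary~\ref{C:Twist_sus_conn_sum} in both cases. You verify the fundamental group and homology by hand (via the mapping-torus picture of $P$, van Kampen, and Mayer--Vietoris) rather than by citing Lemma~\ref{L:Twist_susp_topology}, equation~\eqref{EQ:Twist_sus_hom}, and the Gysin sequence as the paper does, but these amount to the same calculations, so the approach is essentially identical.
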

	
	As final application we construct examples of odd-dimensional simply-connected manifolds of positive Ricci curvature with prescribed third homology group. We note that it was shown in \cite{CW17} that any $2$-connected $7$-manifold admits a Riemannian metric of positive Ricci curvature and in particular any finitely generated abelian group appears as the third homology group of such a manifold. By taking products or twisted suspensions one can extend this result to higher dimensions and to homology groups of higher degrees as well. Here we give an alternative construction with a rather simple rational cohomology ring.
		
	\begin{theoremAlph}\label{T:prescr_G}
		Let
		\[ G\cong \bigslant{\Z}{k_1\Z}\oplus\dots\oplus\bigslant{\Z}{k_{\ell_1}\Z}\oplus\Z^{\ell_2} \]
		be an arbitrary finitely generated abelian group and set $\ell=\ell_1+\ell_2$. Then, for any $m\geq 3$, there exists a closed, simply-connected manifold $M^{2m+1}_G$ of positive Ricci curvature with the rational cohomology ring of $\#_{\ell} (S^2\times S^{2m-1})\#_{\ell_2}(S^3\times S^{2m-2})$ and such that $H_3(M_G)\cong G$.
	\end{theoremAlph}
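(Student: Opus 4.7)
The plan is to realize $M_G$ as a twisted suspension of a carefully chosen positive-Ricci $2m$-manifold, followed by a controlled connected sum with copies of $S^2 \times S^{2m-1}$. The case $\ell_1 = 0$ (so $G = \Z^{\ell_2}$) is handled directly: setting $M_G := \#_{\ell_2}(S^2 \times S^{2m-1})\#_{\ell_2}(S^3 \times S^{2m-2})$ gives a manifold with $H_3(M_G) = \Z^{\ell_2} = G$ and the correct rational cohomology by inspection, with positive Ricci by Sha--Yang \cite{SY91}. The remainder of the plan treats $\ell_1 \geq 1$.

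The key building block is $\Sigma_{kh}\C P^m$, where $h \in H^2(\C P^m;\Z)$ is the positive generator. Its principal $S^1$-bundle is the lens space $L^{2m+1}_k$, and the complement of a tubular neighborhood of a fiber deformation-retracts onto the restriction of the circle bundle to $\C P^{m-1} \subset \C P^m$, namely $L^{2m-1}_k$. A Mayer--Vietoris argument for the decomposition
\[
\Sigma_{kh}\C P^m \simeq L^{2m-1}_k \cup_{S^1\times S^{2m-1}} D^2 \times S^{2m-1}
\]
shows that $\Sigma_{kh}\C P^m$ is simply connected with $H_2 = \Z$, $H_3 = \Z/k$, $H_{2m-3} = \Z/k$, $H_{2m-1} = \Z$, trivial intermediate homology, and the rational cohomology ring of $S^2 \times S^{2m-1}$. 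Theorem \ref{T:Twist_sus_Ric} then gives positive Ricci.

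To realize the full group $G$, I take as base
\[
B^{2m} := (\#_{\ell_1}\C P^m)\#(\#_{\ell_2}(S^3 \times S^{2m-3})),
\]
equipped with the Euler class $e := (k_1 h_1,\ldots,k_{\ell_1}h_{\ell_1},0,\ldots,0) \in H^2(B;\Z)$ supported on the $\C P^m$-summands. A Gysin-sequence computation for the principal circle bundle $P \to B$, using that $\cup e$ acts diagonally as $h_i \mapsto k_i h_i^2$ on the $\C P^m$-summands and vanishes on the $S^3 \times S^{2m-3}$-summands, combined with the Mayer--Vietoris argument from the building-block step, yields
\[
H_2(\Sigma_e B) = \Z^{\ell_1}, \qquad H_3(\Sigma_e B) = \bigoplus_{i=1}^{\ell_1}\Z/k_i \oplus \Z^{\ell_2},
\]
and the rational cohomology ring of $\#_{\ell_1}(S^2 \times S^{2m-1})\#_{\ell_2}(S^3 \times S^{2m-2})$, with the potentially non-trivial cup products $h_i \cup h_i$ killed rationally by the $\cup e$ action in the Gysin sequence. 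Setting
\[
M_G := \Sigma_e B\ \#_{\ell_2}(S^2 \times S^{2m-1})
\]
raises $b_2$ and $b_{2m-1}$ each by $\ell_2$ while preserving $H_3(M_G) = G$, matching the target topology. For the edge dimension $m = 3$, the base must be slightly adapted since $H^3(S^3 \times S^3) = \Z^2$ rather than $\Z$; this can be done using the author's prior constructions in dimension $6$ (cf.\ \cite{Re22a,Re23}).

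The main obstacle is producing a positive-Ricci metric on the base $B$. Individually, $\#_{\ell_1}\C P^m$ admits positive Ricci by Perelman's theorem (for $m = 2$) and by analogous higher-dimensional constructions, while $\#_{\ell_2}(S^3 \times S^{2m-3})$ admits positive Ricci by Sha--Yang \cite{SY91}; combining these on the connected sum $B$ requires careful invocation of existing Sha--Yang-type or Perelman-type gluing constructions in the relevant dimensions. Once this is achieved, Theorem \ref{T:Twist_sus_Ric} provides positive Ricci on $\Sigma_e B$, and Corollary \ref{C:Twist_sus_conn_sum} propagates it to $M_G$.
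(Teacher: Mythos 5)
Your proposal correctly identifies the key building block $\Sigma_{e_i}\C P^m$ and its cohomology (this matches the paper's Lemma~\ref{L:Twist_susp_CPm}), and you are right that the theorem should be proved by taking twisted suspensions of a connected sum of $\C P^m$'s with some other summands. However, your choice of $S^3\times S^{2m-3}$ as the second building block produces the wrong rational cohomology. Using formula~\eqref{EQ:Twist_sus_hom}, the space $\Sigma_0(S^3\times S^{2m-3})$ has rational Betti numbers $b_3=b_4=b_{2m-3}=b_{2m-2}=1$ (and $b_3=b_4=2$ when $m=3$), so your $M_G = \Sigma_e B\,\#_{\ell_2}(S^2\times S^{2m-1})$ carries extra copies of $\Q$ in degrees $4$ and $2m-3$ that do not appear in $\#_\ell(S^2\times S^{2m-1})\#_{\ell_2}(S^3\times S^{2m-2})$. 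The group $H_3(M_G)$ does come out to $G$, but the rational cohomology ring requirement fails. The paper instead uses $S^2\times S^{2m-2}$ as the second summand: by \eqref{EQ:Twist_sus_hom} one gets $\Sigma_0(S^2\times S^{2m-2})\cong(S^2\times S^{2m-1})\#(S^3\times S^{2m-2})$ (this diffeomorphism is from \cite[Theorem~B]{GR23}), which hits exactly the target cohomology, with no extra $\#_{\ell_2}(S^2\times S^{2m-1})$ attached afterwards.

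A secondary issue is that your argument for positive Ricci on the base $B$ is left at the level of ``careful invocation of existing Sha--Yang-type or Perelman-type gluing constructions.'' The paper resolves this concretely via Burdick's core metrics: $\C P^m$ admits a core metric by \cite[Theorem~C]{Bu19}, $S^2\times S^{2m-2}$ by \cite[Theorem~C]{Re23}, and \cite[Theorem~B]{Bu19} then gives positive Ricci on the connected sum $\#_{\ell_1}\C P^m\#_{\ell_2}(S^2\times S^{2m-2})$, after which Theorem~\ref{T:Twist_sus_Ric} applies. You should replace the $S^3\times S^{2m-3}$ summands by $S^2\times S^{2m-2}$, drop the extra $\#_{\ell_2}(S^2\times S^{2m-1})$, and cite the core-metric theorems to close both gaps.
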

	
	For homology groups of smaller degree, any finite group is a subgroup of $\mathrm{SU}(n)$ for some $n$ and can therefore be realized as the fundamental group of a closed manifold of positive Ricci curvature. Alternatively, if one only considers the first homology group, appropriate products of lens spaces can produce any given finite abelian group as fundamental group. Note that, by the theorem of Bonnet--Myers, infinite fundamental groups cannot occur. If we fix the dimension, there are also restrictions for finite groups, see \cite{CW06}. For the second homology, to the best of our knowledge, the corresponding problem is open.

	This article is laid out as follows. In Section \ref{S:PREL} we recall basic facts on principal $S^1$-bundles and introduce the twisted suspension. In Section \ref{S:Ric>0} we construct metrics of positive Ricci curvature on twisted suspensions to prove Theorem \ref{T:Twist_sus_Ric}. Finally, in Section \ref{S:Appl} we consider applications and prove Theorems \ref{T:cohomo_2}--\ref{T:prescr_G}. In Appendix \ref{A:cohom_2} we recall basic facts on cohomogeneity-two torus actions and prove a proposition we need in the proof of Theorem \ref{T:cohomo_2}.
	
	\begin{ack}
		The author would like to thank Fernando Galaz-García and Philippe Kupper for helpful discussions, and them as well as Sam Hagh Shenas Noshari, Michael Wiemeler and the anonymous referee for helpful comments on an earlier version of this article. The author would also like to thank the Department of Mathematical Sciences of Durham University for its hospitality during a stay where parts of this work were carried out.
	\end{ack}
	
	\section{Preliminaries}\label{S:PREL}
			
	\subsection{Principal $S^1$-bundles}\label{SS:S1-bundles}
	
	In this section we review basic facts on the topology and the geometry of principal $S^1$-bundles. See for example \cite{Ba14}, \cite{Mo01} for further details. All manifolds, bundles and maps between manifolds are, unless stated otherwise, considered in the smooth category.
	
	Recall that for a manifold $M$ isomorphism classes of principal $S^1$-bundles $P\xrightarrow{\pi}M$ over $M$ are in bijection with the second integral cohomology group $H^2(M;\Z)$, a bijection is given by the Euler class $e(\pi)\in H^2(M;\Z)$.
	
	Now given a principal $S^1$-bundle $P\xrightarrow{\pi}M$ we define the \emph{vertical distribution} $\mathcal{V}\subseteq TP$ by $\mathcal{V}=\ker(d\pi)$, i.e.\ $\mathcal{V}$ is the line bundle tangent to each fibre. A canonical section of $\mathcal{V}$ is given by the action field $\partial_t$ defined  by $\partial_t=\frac{d}{dt}|_{t=0}x\cdot e^{it}$ for each $x\in P$.
	
	A \emph{connection form} is a differential 1-form $A$ on $P$ such that $A(\partial_t)=1$ and $A$ is invariant under the $S^1$-action. From a connection form $A$ we obtain the \emph{horizontal distribution} $\mathcal{H}\subseteq TP$ via $\mathcal{H}=\ker(A)$.
	
	The \emph{curvature form} $\mathcal{F}$ associated to a connection form $A$ is the 2-form $\mathcal{F}=dA$. Since $\mathcal{F}$ is invariant under the $S^1$-action and trivial on $\mathcal{V}$, it is the pull-back of a form on $M$, which we will also denote by $\mathcal{F}$.
	\begin{proposition}[{\cite[Satz 3.23]{Ba14}}]\label{P:CURV_FORM}
		The cohomology class of $-\frac{1}{2\pi}\mathcal{F}$ in $H^2_{dR}(M)$ equals the image of $e(\pi)$ in $H^2(M;\R)$. Moreover, for every representative $\omega$ of this class there exists a connection form $A$ whose curvature form satisfies $-\frac{1}{2\pi}\mathcal{F}=\omega$.
	\end{proposition}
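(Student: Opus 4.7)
The plan is to prove the two assertions in sequence: first that the de~Rham class $[-\tfrac{1}{2\pi}\mathcal{F}]$ is a well-defined invariant of $\pi$ that coincides with the image of $e(\pi)$, and then that every representative of this class is realized by some connection.

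First I would dispose of the well-definedness. Given two connection forms $A_0,A_1$ on $P$, the difference $A_1-A_0$ is $S^1$-invariant and annihilates $\partial_t$, so it is the pull-back $\pi^*\alpha$ of a $1$-form $\alpha$ on $M$. Hence $\mathcal{F}_1-\mathcal{F}_0=d(\pi^*\alpha)=\pi^*(d\alpha)$, which is exact on $M$. Thus $[-\tfrac{1}{2\pi}\mathcal{F}]\in H^2_{dR}(M)$ depends only on $\pi$.

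For the identification with $e(\pi)$ I would use a \v{C}ech--de~Rham comparison. Choose a good cover $\{U_\alpha\}$ of $M$ on which $\pi$ trivializes with transition functions $g_{\alpha\beta}\colon U_\alpha\cap U_\beta\to S^1$; on each double intersection pick a lift $g_{\alpha\beta}=e^{2\pi i f_{\alpha\beta}}$ with $f_{\alpha\beta}\colon U_\alpha\cap U_\beta\to\R$. The cocycle $n_{\alpha\beta\gamma}:=f_{\alpha\beta}+f_{\beta\gamma}-f_{\alpha\gamma}\in\Z$ represents $e(\pi)$ in \v{C}ech cohomology. Using a partition of unity $\{\rho_\alpha\}$ subordinate to the cover, build a connection form by patching the trivial local connections $dt_\alpha$ (in fibre coordinates) with correction terms proportional to $\rho_\beta\,df_{\alpha\beta}$; then one computes that $-\tfrac{1}{2\pi}\mathcal{F}$ on $U_\alpha$ equals $-\sum_\beta d(\rho_\beta\,df_{\alpha\beta})$, and a standard diagonal chase through the double complex shows that this de~Rham representative maps to the \v{C}ech cocycle $\{n_{\alpha\beta\gamma}\}$ under the \v{C}ech--de~Rham isomorphism. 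This step, the explicit matching of the two cocycles up to the correct sign and $2\pi$ factor, is the main technical obstacle; bookkeeping of signs is the usual pitfall, and one should double-check by evaluating on the Hopf bundle $S^3\to S^2$ where both classes equal a generator of $H^2(S^2;\Z)$.

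For the second assertion, fix any connection form $A_0$ with curvature $\mathcal{F}_0$, and let $\omega$ be a representative of $[-\tfrac{1}{2\pi}\mathcal{F}_0]$. Then $\mathcal{F}_0+2\pi\omega$ is exact on $M$, so write $\mathcal{F}_0+2\pi\omega=d\gamma$ for some $\gamma\in\Omega^1(M)$. Define
\[
A:=A_0-\pi^*\gamma.
\]
Because $\pi^*\gamma$ annihilates $\partial_t$ and is $S^1$-invariant, $A$ is again a connection form: $A(\partial_t)=A_0(\partial_t)=1$ and $S^1$-invariance is preserved. Its curvature form on $P$ equals $\mathcal{F}_0-\pi^*d\gamma$, which descends on $M$ to $\mathcal{F}_0-d\gamma=-2\pi\omega$, giving $-\tfrac{1}{2\pi}\mathcal{F}=\omega$ as desired. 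This final step is purely formal once Step~1 is known, so the only real work is the \v{C}ech--de~Rham identification in the middle.
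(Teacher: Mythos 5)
The paper does not give its own proof of this proposition: it is cited verbatim from \cite[Satz 3.23]{Ba14}, so there is no in-paper argument to compare against. Your sketch is the standard Chern--Weil/\v{C}ech--de~Rham proof and is correct in outline. The well-definedness step (the difference $A_1-A_0$ of two connection forms is horizontal and invariant, hence basic, so the curvatures differ by $\pi^*(d\alpha)$) and the realizability step (replace $A_0$ by $A_0-\pi^*\gamma$ where $d\gamma=\mathcal{F}_0+2\pi\omega$) are both complete and correct as stated. The middle step, matching $-\tfrac{1}{2\pi}\mathcal{F}$ built from a partition-of-unity connection to the integral \v{C}ech cocycle $n_{\alpha\beta\gamma}$, is only sketched; that is appropriate, since it is the usual staircase chase in the \v{C}ech--de~Rham double complex, and you correctly flag the sign and $2\pi$ normalization (and the sanity check on the Hopf bundle) as the only real sources of error. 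One small point of care: in your patched connection the correction term should be $2\pi\sum_\gamma\rho_\gamma\,df_{\gamma\alpha}$ (not merely ``proportional to $\rho_\beta\,df_{\alpha\beta}$''), since the factor $2\pi$ and the index placement are exactly what make the local expressions agree on overlaps and produce the correctly normalized class; this is folded into the sign bookkeeping you already acknowledge.
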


	Given a Riemannian metric $\check{g}$ on $M$, a connection form $A$ and a smooth function $\phi\colon M\to \R$, we define the following $S^1$-invariant metric $g$ on $P$:
	\begin{equation}\label{EQ:g}
		g=\pi^*\check{g}+e^{2\phi}A^2.
	\end{equation}
	By construction $\pi\colon(P,g)\to(M,\check{g})$ is a Riemannian submersion with horizontal distribution given by $\mathcal{H}$. We will be interested in the Ricci curvatures of this metric.
	\begin{lemma}[{\cite[Lemma 1.3]{GPT98}}]\label{L:g_Ric}
		Let $P\xrightarrow{\pi}M$ be a principal $S^1$-bundle and let $g$ be the metric on $P$ defined in \eqref{EQ:g}. Then, for horizontal vectors $X,Y\in\mathcal{H}$, the Ricci curvatures of the metric $g$ are given as follows, where we set $T=e^{-\phi}\partial_t$.
		\begin{align*}
			\Ric(T,T)&= \Delta \phi-\|d\phi\|^2+\frac{e^{2\phi}}{4}\|\mathcal{F}\|^2,\\
			\Ric(T,X)&=\frac{e^\phi}{2}\left(-\delta \mathcal{F}(X)+3\mathcal{F}(X,\nabla\phi) \right),\\
			\Ric(X,Y)&=\Ric_{\check{g}}(X,Y)-\frac{e^{2\phi}}{2}\sum_i\mathcal{F}(X,e_i)\mathcal{F}(Y,e_i)-\textup{Hess}_\phi(X,Y)-X(\phi)Y(\phi).
		\end{align*}
	\end{lemma}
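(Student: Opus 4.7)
The map $\pi\colon (P,g)\to (M,\check g)$ is a Riemannian submersion whose circular fibres have length $2\pi e^\phi$ and are therefore \emph{not} totally geodesic unless $\phi$ is constant. My plan is to apply O'Neill's submersion formulas in an adapted orthonormal frame, with the standard correction for non-geodesic fibres.

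Concretely, I would choose a local orthonormal frame $e_1,\ldots,e_n$ on $(M,\check g)$, let $e_i^h$ denote its horizontal lifts with respect to $A$, and set $T=e^{-\phi}\partial_t$, so that $\{T,e_1^h,\ldots,e_n^h\}$ is orthonormal for $g$. The two structural identities that drive the computation are
\[
[e_i^h,e_j^h]^{\mathcal{V}}=-\mathcal{F}(e_i,e_j)\partial_t,\qquad [T,e_i^h]=-e_i(\phi)\,T,
\]
which encode O'Neill's integrability tensor (via $\mathcal{F}$) and the shape tensor of the fibres (via $\nabla\phi$), respectively. Together with the explicit form $g=\pi^*\check g+e^{2\phi}A^2$, the Koszul formula yields every Christoffel symbol in the adapted frame; in particular $\nabla_T T=-(\nabla\phi)^h$, the vertical part of $\nabla_{e_i^h}e_j^h$ is $\tfrac{e^\phi}{2}\mathcal{F}(e_i,e_j)T$, and the horizontal part differs from $(\check\nabla_{e_i}e_j)^h$ only by an $\mathcal{F}$--term.

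From these connection coefficients I would compute $R(\cdot,\cdot)\cdot$ and take the relevant traces. The term $\Ric(T,T)=-\sum_i\langle R(e_i^h,T)T,e_i^h\rangle$ picks up $\Delta\phi-\|\nabla\phi\|^2$ from differentiating the mean curvature of the fibres along the horizontal frame, and $\tfrac{e^{2\phi}}{4}\|\mathcal{F}\|^2$ from O'Neill's $\|A\|^2$ contribution (the $e^{2\phi}$ arising because each $\mathcal{F}(e_i,e_j)$ is paired against the \emph{unit} vertical $T$). The mixed term $\Ric(T,X)$ is the standard Kaluza--Klein codifferential $-\tfrac12 e^\phi\delta\mathcal{F}(X)$ plus the warping correction $\tfrac32 e^\phi\mathcal{F}(X,\nabla\phi)$ coming from the non-constant fibre length. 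Finally, $\Ric(X,Y)$ is the base Ricci $\Ric_{\check g}(X,Y)$ corrected by the A--tensor contribution $-\tfrac{e^{2\phi}}{2}\sum_i\mathcal{F}(X,e_i)\mathcal{F}(Y,e_i)$ and the warping terms $-\textup{Hess}_\phi(X,Y)-X(\phi)Y(\phi)$.

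The main obstacle is purely bookkeeping: factors of $e^\phi$ appear whenever one trades the Killing field $\partial_t$ for the unit vertical $T$ or pulls $\mathcal{F}$ from $M$ up to $P$, and the signs must be coordinated consistently with the conventions $\mathcal{F}=dA$ and for the Ricci and codifferential operators. A clean way to organize the calculation is to first verify the formulas in the unwarped case $\phi\equiv 0$, where they reduce to the classical Bérard-Bergery formulas for principal $S^1$-bundles, and then track how each term transforms under the conformal rescaling of the fibre metric by $e^{2\phi}$.
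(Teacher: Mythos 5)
The paper does not prove this lemma: it is quoted verbatim from Gilkey--Park--Tuschmann \cite[Lemma 1.3]{GPT98}, so there is no in-paper argument to compare against. Your proposal therefore fills in a proof the paper delegates to a reference.

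As an outline, the approach is the right one and is, as far as I can tell, essentially what \cite{GPT98} does: treat $\pi\colon(P,g)\to(M,\check g)$ as a Riemannian submersion with non--totally-geodesic circular fibres, derive the Christoffel symbols in the adapted frame $\{T,e_1^h,\dots,e_n^h\}$ from the Koszul formula, and trace. The two structural identities you isolate are indeed the engine of the computation. Your suggestion to first check the totally geodesic case $\phi\equiv\mathrm{const}$ (which must reproduce Corollary~\ref{C:g_Ric_tot-geod} / the Bérard-Bergery formulas) and then track the $e^{2\phi}$-rescaling of the fibre is a sensible sanity check.

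Two sign slips to flag, since you correctly identify sign coordination as the main hazard. First, with $T=e^{-\phi}\partial_t$, $\phi$ pulled back from $M$, and $e_i^h$ the $S^1$-invariant horizontal lift, one has $[\partial_t,e_i^h]=0$, hence $[T,e_i^h]=e_i(\phi)\,T$, not $-e_i(\phi)\,T$ (you have written the bracket for $[e_i^h,T]$). Second, with $\mathcal{F}=dA$ and $A(e_i^h)=0$, the Koszul formula gives $g(\nabla_{e_i^h}e_j^h,T)=\tfrac12 g([e_i^h,e_j^h],T)=-\tfrac{e^\phi}{2}\mathcal{F}(e_i,e_j)$, so the vertical part of $\nabla_{e_i^h}e_j^h$ is $-\tfrac{e^\phi}{2}\mathcal{F}(e_i,e_j)T$. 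Neither sign affects the $\mathcal{F}^2$-terms in $\Ric(T,T)$ and $\Ric(X,Y)$, but both matter for getting the coefficients $-\delta\mathcal{F}(X)$ and $+3\mathcal{F}(X,\nabla\phi)$ in $\Ric(T,X)$ to come out with the stated signs, so these are exactly the places where the bookkeeping needs to be done with the care you already anticipate.
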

	Here $(e_i)$ is a horizontal orthonormal basis and we identified $X$ and $Y$ with their images under $\pi_*$. We use the convention $\Delta\phi=-\textup{tr}(\textup{Hess}_\phi)$.
	
	\begin{corollary}\label{C:g_Ric_tot-geod}
		In the situation of Lemma \ref{L:g_Ric} suppose that $\phi$ is constant. Then the Ricci curvatures of $g$ are given as follows:
		\begin{align*}
			\Ric(T,T)&= \frac{e^{2\phi}}{4}\|\mathcal{F}\|^2,\\
			\Ric(T,X)&=-\frac{e^\phi}{2}\delta \mathcal{F}(X),\\
			\Ric(X,Y)&=\Ric_{\check{g}}(X,Y)-\frac{e^{2\phi}}{2}\sum_i\mathcal{F}(X,e_i)\mathcal{F}(Y,e_i).
		\end{align*}
	\end{corollary}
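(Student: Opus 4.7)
The plan is to obtain this as an immediate specialization of Lemma \ref{L:g_Ric}, setting all derivatives of $\phi$ to zero. Concretely, the hypothesis that $\phi$ is constant means $d\phi = 0$ on $M$, hence also $\nabla\phi = 0$, $\|d\phi\|^2 = 0$, $\Delta\phi = 0$, and $\textup{Hess}_\phi \equiv 0$; moreover $X(\phi) = Y(\phi) = 0$ for any vectors $X,Y$.

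Substituting these vanishings term by term into the three formulas of Lemma \ref{L:g_Ric}: for $\Ric(T,T)$, the first two summands $\Delta\phi - \|d\phi\|^2$ drop out, leaving $\frac{e^{2\phi}}{4}\|\mathcal{F}\|^2$. For the mixed curvature $\Ric(T,X)$, the term $3\mathcal{F}(X,\nabla\phi)$ vanishes since $\nabla\phi = 0$, leaving $-\frac{e^\phi}{2}\delta\mathcal{F}(X)$. For $\Ric(X,Y)$ with horizontal $X,Y$, both $\textup{Hess}_\phi(X,Y)$ and $X(\phi)Y(\phi)$ vanish, giving the stated formula.

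There is no genuine obstacle here: the corollary is a direct consequence obtained by evaluating the general expressions of Lemma \ref{L:g_Ric} at a constant $\phi$. The only thing worth remarking is the geometric content behind the name of the corollary, namely that a constant $\phi$ corresponds to the fibres of $\pi\colon (P,g)\to (M,\check g)$ being totally geodesic (all fibres have the same constant length $2\pi e^{\phi}$), which is consistent with the disappearance of the terms coming from the variation of the fibre length along horizontal directions.
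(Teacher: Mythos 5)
Your proof is correct and coincides with the paper's (implicit) argument: the corollary is stated in the paper without proof precisely because it is the immediate substitution of a constant $\phi$ into Lemma \ref{L:g_Ric}, which is exactly what you carry out. Your closing remark about the geometric meaning (totally geodesic fibres of constant length) is accurate and a nice addition, though not needed for the deduction.
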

	Corollary \ref{C:g_Ric_tot-geod} shows that, by choosing $A$ so that $\mathcal{F}$ is harmonic, the metric $g$ has non-negative Ricci curvature for $\phi$ sufficiently small, provided $\check{g}$ has positive Ricci curvature and $M$ is compact. To obtain strictly positive Ricci curvature one can use the fact that if $P$ has finite fundamental group, the form $\mathcal{F}$ does not vanish identically, showing that there is a point where all Ricci curvature of $g$ are positive. The deformation results of \cite{Eh76} allow then to deform the metric to have positive Ricci curvature everywhere. This is basically the approach taken in \cite{BB78}. Alternatively, instead of perturbing the metric, one can perturb the function $\phi$ as in \cite{GPT98} to obtain strictly positive Ricci curvature. The latter approach has the advantage that the resulting metric is again of the form \eqref{EQ:g}.

	\subsection{Twisted suspensions}\label{SS:Twisted_sus}
	
	In this section we consider the twisted suspension as introduced in \cite{GR23}, which generalizes the suspension operation introduced by Duan \cite{Du22} and is based on the spinning operation for knots due to Artin \cite{Ar25}.
	
	For a connected manifold $M^n$ and a class $e\in H^2(M;\Z)$ we consider the unique principal $S^1$-bundle $P\xrightarrow{\pi}M$ with Euler class $e$. Let $D^n\hookrightarrow M$ be an embedding, which we assume to be orientation preserving if $M$ is orientable, and let $\varphi_\pi\colon D^n\times S^1\hookrightarrow P$ be a local trivialization covering this embedding.
	
	\begin{definition}\label{D:twist_susp}
		The \emph{suspension of $M$ twisted by $e$}, denoted $\Sigma_e M$, is defined by
		\[ \Sigma_e M=P\setminus \varphi_\pi(D^n\times S^1)^\circ\cup_{\mathrm{id}_{S^{n-1}\times S^1}}(S^{n-1}\times D^2).  \]
	\end{definition}
	For $e=0$ the twisted suspension $\Sigma_0 M$ is the suspension introduced by Duan \cite{Du22}. Note that in \cite{GR23} also the twisted suspension $\widetilde{\Sigma}_e M$, which is obtained by gluing along a non-trivial diffeomorphism in Definition \ref{D:twist_susp}, is introduced. We will not consider this suspension operation in this article and refer to Remark \ref{R:TW_TW_SUSP} below, where we discuss in which cases the two suspension operations coincide.
	
	Some topological properties of the space $\Sigma_e M$ are given as follows:
	\begin{lemma}[{\cite[Lemma 5.2]{GR23}}]\label{L:Twist_susp_topology}
		Let $M^n$ be a connected manifold with $n\geq 2$ and let $e\in H^2(M;\Z)$. Then the fundamental group and (co-)homology of $\Sigma_e M$ are given as follows:
		\begin{enumerate}
			\item $\pi_1(\Sigma_e M)\cong \pi_1(M)$ if $n> 2$ and $\pi_1(\Sigma_e M)\cong \pi_1(M\setminus D^2)$ if $n=2$,
			\item $H^2(\Sigma_e M)\cong H^2(M)$ if $M$ is simply-connected and similarly for homology, where we can choose coefficients in any commutative ring, and $\Sigma_e M$ is spin if and only if $w_2(M)\equiv e\mod 2$,
			\item The inclusion $P\setminus\varphi_\pi(D^n\times S^1)^\circ\hookrightarrow \Sigma_e M$ induces isomorphisms in\linebreak (co-)homology in all degrees $3\leq i\leq n$ with coefficients in any commutative ring.
		\end{enumerate}
	\end{lemma}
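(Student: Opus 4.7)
The plan is to use two complementary decompositions of the surgery along the common boundary $T = S^{n-1}\times S^1$: the defining one $\Sigma_e M = C\cup_T (S^{n-1}\times D^2)$ and the original one $P = C\cup_T (D^n\times S^1)$, where $C = P\setminus\varphi_\pi(D^n\times S^1)^\circ$. The main tools are van Kampen, the long exact sequence of the pair $(\Sigma_e M, C)$, and the Gysin and homotopy exact sequences of $\pi\colon P\to M$. The key computational input is that, by excision and the relative K\"unneth formula for $(D^2, S^1)$,
\[ H_i(\Sigma_e M, C; R) \cong H_{i-2}(S^{n-1}; R), \]
which vanishes except in degrees $i=2$ and $i=n+1$.

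For part (1), for $n\geq 3$ two applications of van Kampen give first $\pi_1(C)\cong \pi_1(P)$ (as the inclusion $T\hookrightarrow D^n\times S^1$ induces an iso on $\pi_1$) and then $\pi_1(\Sigma_e M)\cong \pi_1(C)/\langle[\gamma]\rangle$, where $[\gamma]$ is the fiber class (since $\pi_1(S^{n-1}\times D^2)=1$); the homotopy exact sequence $\pi_1(S^1)\to \pi_1(P)\to \pi_1(M)\to 0$ identifies this with $\pi_1(M)$. For $n=2$, the bundle $P|_{M\setminus D^2}$ is trivial (by $H^2(M\setminus D^2;\Z)=0$), so $C\simeq(M\setminus D^2)\times S^1$; van Kampen with the gluing relations then kills the fiber and identifies the core of $S^1\times D^2$ with the boundary of the removed disk, yielding $\pi_1(M\setminus D^2)$. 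For part (3), the long exact sequence of $(\Sigma_e M, C; R)$ immediately gives the iso in degrees $3\leq i\leq n-1$. At $i=n$, the possibly nontrivial connecting map $R\cong H_{n+1}(\Sigma_e M, C; R)\to H_n(C; R)$ factors through $H_n(T; R)\to H_n(C;R)$, and under $R$-orientability (automatic for $R=\Z/2$) the fundamental class $[T]=\partial[C, T]$ vanishes in $H_n(C; R)$ by the long exact sequence of $(C,\partial C)$, so the inclusion induces an iso in degree $n$ too.

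For part (2) with $M$ simply-connected: Gysin gives $H^2(P; R)\cong H^2(M; R)/Re$ and $H^1(P; R)\cong\ker(R\xrightarrow{\cup e} H^2(M; R))$, and combining with the $(P,C)$-analog of part (3) the long exact sequence of $(\Sigma_e M, C; R)$ in degree $2$ yields a short exact sequence $0\to R\to H^2(\Sigma_e M; R)\to H^2(M; R)/Re\to 0$. To identify this extension with $H^2(M; R)$, I would construct the 2-handle cobordism $W = P\times I\cup_{S^1\times D^n}(D^2\times D^n)$ and the map $\bar\pi\colon W\to M$ extending $\pi$ by the projection $D^2\times D^n\to D^n\hookrightarrow M$ on the handle; the induced $\bar\pi^*\colon H^2(M; R)\to H^2(W; R)\cong H^2(\Sigma_e M; R)$ is the desired isomorphism, compatible with Gysin. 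Higher-degree (co)homology then follows from part (3) combined with Gysin. For the spin characterization, compute $w_2(\Sigma_e M)$ piecewise: on $C$, $T\Sigma_e M|_C = TP|_C = \pi^*TM\oplus\epsilon$, so $w_2(\Sigma_e M)|_C = \pi^*w_2(M)|_C$; on $S^{n-1}\times D^2$, $T\Sigma_e M = TS^{n-1}\oplus\epsilon^2$, so $w_2$ restricts trivially. Under the cohomology isomorphism above this translates to $w_2(\Sigma_e M) = w_2(M) - e\bmod 2$, giving the spin condition.

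The main technical obstacle will be the careful identification of the extension in degree $2$: verifying that the extra $R$-summand produced by the surgery corresponds precisely to $e\in H^2(M; R)$ under the claimed isomorphism $H^2(\Sigma_e M; R)\cong H^2(M; R)$, which geometrically amounts to identifying the Poincar\'e dual of the co-core $\{\ast\}\times D^2$ of the surgery with a class matching $e$, and then checking compatibility with the piecewise $w_2$ computation.
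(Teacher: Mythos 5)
This lemma is quoted in the paper directly from \cite[Lemma 5.2]{GR23} with no in-paper proof, so there is no internal argument to compare against; I am evaluating your sketch on its own terms. The overall architecture (the two decompositions of the surgery, the relative K\"unneth computation $H_i(\Sigma_e M,C;R)\cong H_{i-2}(S^{n-1};R)$, van Kampen, Gysin, and the handle cobordism $W$) is sensible, and parts (1) and the range $3\le i\le n-1$ of part (3) are fine.

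However, there are genuine gaps. The most concrete is the asserted short exact sequence $0\to R\to H^2(\Sigma_e M;R)\to H^2(M;R)/Re\to 0$ in part (2): in the long exact sequence of $(\Sigma_e M,C)$ the term $H^1(C;R)\cong H^1(P;R)$ sits to the left of $H^2(\Sigma_e M,C;R)\cong R$, and since $H^1(\Sigma_e M;R)=0$ the map $H^1(C;R)\to H^2(\Sigma_e M,C;R)$ is \emph{injective}. Whenever $e$ has divisibility $d>1$ one has $\pi_1(P)\cong\Z/d$, hence $H^1(P;R)\ne 0$ for suitable $R$ (e.g.\ $M=\C P^2$, $e=2x$, $R=\Z/2$ gives $H^1(C;\Z/2)\cong\Z/2$, which maps isomorphically onto $H^2(\Sigma_e M,C;\Z/2)$), so your sequence is not exact at the left and the claimed extension is simply false in these cases, even though the final isomorphism $H^2(\Sigma_e M;R)\cong H^2(M;R)$ still holds. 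Your proposed cure via the cobordism $W$ and $\bar\pi\colon W\to M$ is the right idea, but you only assert that $H^2(W)\cong H^2(\Sigma_e M)$ (this is where the dual $n$-handle matters, and for $n=3$, which the lemma allows, the 3-handle can change $H^2$) and that $\bar\pi^*$ is an isomorphism; the latter is precisely the extension identification you yourself flag as ``the main technical obstacle'' and do not carry out. In part (3), the degree-$n$ step uses $[T]=\partial[C,T]$, which needs $R$-orientability of $C$; you note this is automatic for $\Z/2$ but leave the general case (e.g.\ $\Z$ coefficients with $M$ non-orientable, where $H_{n+1}(C,T;\Z)=0$) unresolved, so as written the argument does not cover the full stated generality. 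Finally, the piecewise $w_2$ computation only pins down the restrictions of $w_2(\Sigma_e M)$ to $C$ and to $S^{n-1}\times D^2$; that does not determine the class in $H^2(\Sigma_e M;\Z/2)$ without controlling the Mayer--Vietoris ambiguity from $H^1(T;\Z/2)$, and the translation to $w_2(M)-e$ presupposes the very isomorphism that is not yet established.
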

	For $e=0$ we can give a more explicit description of the (co-)homology groups using Lemma \ref{L:Twist_susp_topology}:
	\begin{equation}\label{EQ:Twist_sus_hom}
		H_i(\Sigma_0 M)\cong \begin{cases}
			H_i(M)\oplus H_{i-1}(M),\quad & i\neq 0,1,n,n+1\\
			H_i(M),\quad & i=0,1,\\
			H_{i-1}(M),\quad & i=n,n+1,
		\end{cases}
	\end{equation}
	(with coefficients in any commutative ring) and similarly for cohomology, cf.\ also \cite[Proposition 3.9]{Du22}. In particular, if $M$ is a (rational) homology sphere, the twisted suspension $\Sigma_0 M$ is also a (rational) homology sphere.

	An alternative description of the space $\Sigma_e M$ can be given in terms of plumbing. We refer to \cite{CW17}, \cite{Re23} and the references given therein for an introduction to plumbing. As in \cite{Re22a} we use the convention that the result of plumbing according to a non-connected graph is the boundary connected sum of the manifolds resulting from plumbing according to each connected component.	
	
	Denote by $\overline{\pi}\colon\overline{P}\to M$ the disc bundle corresponding to $\pi$ and by $\underline{D}^n_{S^2}$ the trivial bundle $S^2\times D^n\to S^2$. Then the space $\Sigma_e M$ is the boundary of the manifold obtained by plumbing according to the following graph:
	\begin{center}
		\begin{tikzpicture}
			\begin{scope}[every node/.style={circle,draw,minimum height=2.5em}]
				\node[label=center:$\overline{\pi}$] (V1) at (1,0) {\phantom{0}};
				\node[label=center:$\underline{D}^n_{S^2}$] (V2) at (3,0) {\phantom{0}};
			\end{scope}
			\path[-](V1) edge["{$\scriptstyle +$}"] (V2);
		\end{tikzpicture}
	\end{center}
	
	Using this description we can prove the following lemma. Recall that for manifolds $M_1^n,M_2^n$ with $n\geq 4$ we have a natural isomorphism $H^2(M_1\# M_2)\cong H^2(M_1)\oplus H^2(M_2)$.
	\begin{lemma}\label{L:Twist_sus_conn_sum}
		Let $M_1^n,\dots,M_\ell^n$ be connected manifolds with $n\geq 4$ and let $e_i\in H^2(M_i;\Z)$ for all $i=1,\dots,\ell$. Let $e=(e_1,\dots,e_\ell)\in H^2(M_1\#\dots\#M_\ell;\Z)$. Then
		\[ \Sigma_e(M_1\#\dots\# M_\ell)\cong \Sigma_{e_1}M_1\#\dots\#\Sigma_{e_\ell}M_\ell. \]
	\end{lemma}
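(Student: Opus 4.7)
The proof proceeds by induction on $\ell$. The case $\ell = 1$ is trivial, and for $\ell \geq 2$ we may write $M_1 \# \cdots \# M_\ell = (M_1 \# \cdots \# M_{\ell-1}) \# M_\ell$ and apply the inductive hypothesis, reducing to the case $\ell = 2$. Set $M = M_1 \# M_2$ and $e = (e_1, e_2)$.

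The principal $S^1$-bundle $P \to M$ with Euler class $e$ decomposes as follows. Let $\psi_i \colon D^n \times S^1 \hookrightarrow P_i$ be local trivializations covering the discs $D^n_i \hookrightarrow M_i$ used to form the connected sum. Then $P$ is obtained by gluing $P_i \setminus \psi_i(D^n \times S^1)^\circ$ along $S^{n-1} \times S^1$. I will choose the trivialization $\varphi$ defining $\Sigma_e M$ to lie in $P_1$, disjoint from $\psi_1$, and take the trivialization defining $\Sigma_{e_2} M_2$ to be precisely $\psi_2$. Then the surgery operations are compatible with the decomposition of $P$ and yield
\[
\Sigma_e M = \left[\Sigma_{e_1} M_1 \setminus \psi_1(D^n \times S^1)^\circ\right] \cup_{S^{n-1} \times S^1} \left[\Sigma_{e_2} M_2 \setminus (S^{n-1} \times D^2)^\circ\right].
\]

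The crucial geometric input is that the fiber $\psi_1(\{\mathrm{pt}\} \times S^1) \subset \Sigma_{e_1} M_1$ is null-homotopic: by Lemma \ref{L:Twist_susp_topology}, $\pi_1(\Sigma_{e_1} M_1) \cong \pi_1(M_1)$ (valid since $n \geq 4$), and the fiber class is killed under this isomorphism. Hence it bounds an embedded disc $D^2 \subset \Sigma_{e_1} M_1$, and a regular neighborhood of $\psi_1(D^n \times S^1) \cup D^2$ is an embedded disc $D^{n+1} \subset \Sigma_{e_1} M_1$ containing $\psi_1(D^n \times S^1)$ as the tubular neighborhood of an unknotted circle. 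Using this disc,
\[
\Sigma_{e_1} M_1 \setminus \psi_1(D^n \times S^1)^\circ = (\Sigma_{e_1} M_1 \setminus D^{n+1}) \cup_{S^n} (D^{n+1} \setminus \psi_1(D^n \times S^1)^\circ).
\]

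The main remaining step, and what I expect to be the crux of the argument, is to identify
\[
Y := (D^{n+1} \setminus \psi_1(D^n \times S^1)^\circ) \cup_{S^{n-1} \times S^1} (\Sigma_{e_2} M_2 \setminus (S^{n-1} \times D^2)^\circ) \cong \Sigma_{e_2} M_2 \setminus D^{n+1}.
\]
For this, I exploit the two natural decompositions of the sphere $S^{n+1} = \partial(D^n \times D^2)$: the standard splitting $S^{n+1} = D^{n+1}_1 \cup_{S^n} D^{n+1}_2$ and the tubular splitting $S^{n+1} = (D^n \times S^1) \cup_{S^{n-1} \times S^1} (S^{n-1} \times D^2)$. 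Positioning a small disc $D^{n+1}_1$ inside the $(S^{n-1} \times D^2)$ piece (so that $(D^n \times S^1) \subset D^{n+1}_2$), both subsets $D^{n+1}_2 \setminus (D^n \times S^1)$ and $(S^{n-1} \times D^2) \setminus D^{n+1}_1$ equal $S^{n+1} \setminus (D^{n+1}_1 \cup (D^n \times S^1))$, hence coincide as manifolds with boundary $S^n \sqcup (S^{n-1} \times S^1)$. Substituting this diffeomorphism, $Y$ becomes $((S^{n-1} \times D^2) \setminus D^{n+1}) \cup_{S^{n-1} \times S^1} (\Sigma_{e_2} M_2 \setminus (S^{n-1} \times D^2)^\circ) = \Sigma_{e_2} M_2 \setminus D^{n+1}$. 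Combining yields
\[
\Sigma_e M = (\Sigma_{e_1} M_1 \setminus D^{n+1}) \cup_{S^n} (\Sigma_{e_2} M_2 \setminus D^{n+1}) = \Sigma_{e_1} M_1 \# \Sigma_{e_2} M_2,
\]
completing the proof.
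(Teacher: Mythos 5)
Your proof takes a genuinely different route from the paper's. The paper identifies $\Sigma_e(M_1\#\dots\#M_\ell)$ as the boundary of a plumbing graph (via \cite[Proposition 3.4]{Re22a}) and then splits the graph edge by edge using \cite[Proposition 3.2]{Re22a}; those cited results carry the bookkeeping of orientations and framings that a plumbing decomposition encodes. You instead perform the cut-and-paste directly, reducing to $\ell=2$, writing $\Sigma_e M$ as $(\Sigma_{e_1}M_1\setminus\psi_1(D^n\times S^1)^\circ)\cup_{S^{n-1}\times S^1}(\Sigma_{e_2}M_2\setminus(S^{n-1}\times D^2)^\circ)$, and then trading a solid torus in $\Sigma_{e_1}M_1$ for a ball. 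The decomposition and the null-homotopy of the fibre $\psi_1(\{\mathrm{pt}\}\times S^1)$ in $\Sigma_{e_1}M_1$ (hence the existence of an embedded bounding $D^2$, since $n+1\geq 5$) are both correct.

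However, there is a genuine gap at the step where you assert that the regular neighbourhood $D^{n+1}$ contains $\psi_1(D^n\times S^1)$ ``as the tubular neighbourhood of an unknotted circle'' and then substitute the model pair $(D^{n+1}_2, D^n\times S^1)\subset S^{n+1}$. For your replacement of $D^{n+1}\setminus\psi_1(D^n\times S^1)^\circ$ by $(S^{n-1}\times D^2)\setminus D^{n+1}_1$ to be compatible with the gluing to $\Sigma_{e_2}M_2\setminus(S^{n-1}\times D^2)^\circ$, the diffeomorphism must restrict on the $S^{n-1}\times S^1$-boundary to the map prescribed by the original surgery and connected-sum gluings (the identity, in the paper's conventions). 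This requires not only that the core circle be unknotted in $D^{n+1}$, but that the \emph{framing} of its tubular neighbourhood given by the bundle trivialization $\psi_1$ agree with the standard product framing; the two framings can differ by the nontrivial element of $\pi_1(\mathrm{SO}(n))\cong\Z/2$. This is precisely the subtlety that distinguishes $\Sigma_e$ from $\widetilde{\Sigma}_e$ (see Remark~\ref{R:TW_TW_SUSP} and the twist map $\tilde\alpha$): if the framing were off by the nontrivial element, your argument would produce $\Sigma_{e_1}M_1\#\widetilde{\Sigma}_{e_2}M_2$ rather than $\Sigma_{e_1}M_1\#\Sigma_{e_2}M_2$, and these are in general not diffeomorphic. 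You neither verify that a bounding disc with the matching Seifert framing exists, nor invoke a result (such as \cite[Proposition 3.6]{GR23}, which only applies under a non-spin hypothesis on the universal cover) that would force the two framed solid tori to be isotopic. Filling this in---for instance by comparing the $\psi_1$-framed fibre to the $\varphi$-framed fibre, which visibly bounds the framed disc $\{v\}\times D^2$ in the attached $S^{n-1}\times D^2$---would complete the argument, but as written the crucial framing identification is asserted rather than proved.
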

	\begin{proof}
		Let $P_i\xrightarrow{\pi_i}M_i$, resp.\ $P\xrightarrow{\pi}M_1\#\dots\# M_\ell$, be the principal $S^1$-bundle with Euler class $e_i$, resp.\ $e$. Then the boundary connected sum of the bundles $\pi_1,\dots,\pi_\ell$ is isomorphic to the bundle $\pi$ since both bundles have Euler class $e$. Hence, $P$ is the boundary of the manifold obtained by plumbing as follows, see \cite[Proposition 3.4]{Re22a}.
		\begin{center}
			\begin{tikzpicture}
				\begin{scope}[every node/.style={circle,draw,minimum height=2.5em}]
					\node[label=center:$\overline{\pi}_1$] (V1) at (1,0) {\phantom{0}};
					\node[label=center:$\underline{D}^n_{S^2}$] (V2) at (3,0) {\phantom{0}};
					\node[label=center:$\overline{\pi}_2$] (V3) at (5,0) {\phantom{0}};
					\node[label=center:$\underline{D}^n_{S^2}$] (V4) at (7,0) {\phantom{0}};
					\node[label=center:$\cdots$,draw=none] (V5) at (9,0) {\phantom{0}};
					\node[label=center:$\overline{\pi}_\ell$] (V6) at (11,0) {\phantom{0}};
				\end{scope}
				\path[-](V1) edge["{$\scriptstyle +$}"] (V2);
				\path[-](V2) edge["{$\scriptstyle +$}"] (V3);
				\path[-](V3) edge["{$\scriptstyle +$}"] (V4);
				\path[-](V4) edge["{$\scriptstyle +$}"] (V5);
				\path[-](V5) edge["{$\scriptstyle +$}"] (V6);
			\end{tikzpicture}
		\end{center}
		Hence, the manifold $\Sigma_e(M_1\#\dots\#M_\ell)$ is the result of the following graph.
		\begin{center}
			\begin{tikzpicture}
				\begin{scope}[every node/.style={circle,draw,minimum height=2.5em}]
					\node[label=center:$\overline{\pi}_1$] (V1) at (1,0) {\phantom{0}};
					\node[label=center:$\underline{D}^n_{S^2}$] (V2) at (3,0) {\phantom{0}};
					\node[label=center:$\overline{\pi}_2$] (V3) at (5,0) {\phantom{0}};
					\node[label=center:$\underline{D}^n_{S^2}$] (V4) at (7,0) {\phantom{0}};
					\node[label=center:$\cdots$,draw=none] (V5) at (9,0) {\phantom{0}};
					\node[label=center:$\overline{\pi}_\ell$] (V6) at (11,0) {\phantom{0}};
					\node[label=center:$\underline{D}^n_{S^2}$] (V7) at (13,0) {\phantom{0}};
				\end{scope}
				\path[-](V1) edge["{$\scriptstyle +$}"] (V2);
				\path[-](V2) edge["{$\scriptstyle +$}"] (V3);
				\path[-](V3) edge["{$\scriptstyle +$}"] (V4);
				\path[-](V4) edge["{$\scriptstyle +$}"] (V5);
				\path[-](V5) edge["{$\scriptstyle +$}"] (V6);
				\path[-](V6) edge["{$\scriptstyle +$}"] (V7);
			\end{tikzpicture}
		\end{center}
		By \cite[Proposition 3.2]{Re22a} we can split this graph into the disjoint union of the graphs
		\begin{center}
			\begin{tikzpicture}
				\begin{scope}[every node/.style={circle,draw,minimum height=2.5em}]
					\node[label=center:$\overline{\pi}_i$] (V1) at (1,0) {\phantom{0}};
					\node[label=center:$\underline{D}^n_{S^2}$] (V2) at (3,0) {\phantom{0}};
				\end{scope}
				\path[-](V1) edge["{$\scriptstyle +$}"] (V2);
			\end{tikzpicture}
		\end{center}
		The resulting manifold is therefore the connected sum of the manifolds $\Sigma_{e_i}M_i$.
	\end{proof}
	
	We can repeat the surgery operation in the definition of the twisted suspension by performing multiple surgeries on fibre spheres. The resulting manifold can again be expressed using the twisted suspension.
	\begin{lemma}\label{L:Twist_susp_multiple}
		Let $M^n$ be a connected manifold, let $e\in H^2(M;\Z)$ and let $P\xrightarrow{\pi}M$ be the principal $S^1$-bundle with Euler class $e$. Let $\varphi_{1},\dots,\varphi_\ell\colon D^n\times S^1\hookrightarrow P$ be local trivializations as in Definition \ref{D:twist_susp} with pairwise disjoint image. Then the space
		\[ P'=\left(P\setminus\bigcup_i\varphi_i(D^n\times S^1)^\circ\right) \cup_{\textup{id}_{\sqcup_i (S^{n-1}\times S^1) } }\left(\bigsqcup_i(S^{n-1}\times D^2)\right) \]
		is diffeomorphic to the connected sum
		\[ P'\cong \Sigma_e M\#_{\ell-1}(S^2\times S^{n-1}). \]
	\end{lemma}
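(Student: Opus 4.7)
The plan is to argue by induction on $\ell$, using the plumbing description of the twisted suspension given just before Lemma \ref{L:Twist_sus_conn_sum}. The base case $\ell=1$ is immediate from Definition \ref{D:twist_susp}. For the inductive step, I first observe that $P'$ is the boundary of the manifold $\Gamma_\ell$ obtained by plumbing $\overline{\pi}$ with $\ell$ copies of $\underline{D}^n_{S^2}$, one attached at each of the distinct points $p_1,\ldots,p_\ell\in M$ where the trivializations $\varphi_i$ are centered. Equivalently, $\Gamma_\ell$ is obtained from the disk bundle $\overline{\pi}$ by attaching $\ell$ disjoint $2$-handles, the $i$-th attached along the fibre circle $\pi^{-1}(p_i)$ with framing induced by $\varphi_i$.

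It then suffices to show $\Gamma_\ell \cong \Gamma_{\ell-1}\,\natural\,(S^2\times D^n)$ as manifolds with boundary, whereupon passing to boundaries converts the boundary connect sum into a connect sum and yields $P' = \partial\Gamma_\ell\cong \partial\Gamma_{\ell-1}\#(S^2\times S^{n-1})$. Combined with the inductive hypothesis $\partial\Gamma_{\ell-1}\cong \Sigma_e M\#_{\ell-2}(S^2\times S^{n-1})$ this gives the lemma. For this, I need to check that the last $2$-handle is attached along a null-homotopic circle whose framing (coming from $\varphi_\ell$) extends over a bounding disk, since a $2$-handle attached in this way can be isotoped into a collared ball and thus realizes exactly a boundary connect sum with the trivial disk bundle $S^2\times D^n$.

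Null-homotopy of $\pi^{-1}(p_\ell)$ in $\Gamma_{\ell-1}$ follows from connectedness of $M$: a path $\gamma\subset M$ from $p_\ell$ to $p_1$ lifts through a local trivialization of $\pi$ along $\gamma$ to a cylinder in $\overline{\pi}$ joining $\pi^{-1}(p_\ell)$ to $\pi^{-1}(p_1)$, and the latter fibre bounds the core of the first $2$-handle inside $\Gamma_1\subset \Gamma_{\ell-1}$. Capping and applying general position (available since $\dim\Gamma_{\ell-1}=n+2\geq 5$) produces an embedded disk. That the framing from $\varphi_\ell$ extends over this disk is then obtained by matching the product framing along the cylinder, supplied by the local trivialization along $\gamma$, with the natural framing of the handle core $D^2\times\{0\}\subset D^2\times D^n$ at the join.

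The main obstacle is the framing verification. Null-homotopic circles in an $(n+2)$-manifold admit two inequivalent normal framings (parametrized by $\pi_1(\mathrm{SO}(n))\cong \Z/2$ when $n\geq 3$), and only the bounding framing yields the trivial summand $S^2\times S^{n-1}$, the other producing the non-trivial linear $S^{n-1}$-bundle over $S^2$. Verifying that the trivializations $\varphi_i$ supply the bounding framing comes down to checking compatibility of the local trivializations of the principal $S^1$-bundle $\pi$ over a contractible neighborhood of $\gamma\cup D^n_{p_1}\cup D^n_{p_\ell}$ in $M$, which is routine but is the only non-formal step in the argument.
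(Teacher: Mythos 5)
Your route is genuinely different from the paper's: the paper recasts $P'$ as the boundary of a star-shaped plumbing with $\overline{\pi}$ at the centre and $\ell$ leaves $\underline{D}^n_{S^2}$, and then cites \cite[Proposition 3.2]{Re22a} to delete all but one edge without changing the boundary, so the answer falls out in one line. You instead argue by induction, interpreting the $\ell$-th surgery as a $2$-handle attachment and trying to show $\Gamma_\ell\cong\Gamma_{\ell-1}\natural(S^2\times D^n)$. That strategy is reasonable (it is essentially the content of the cited plumbing lemma), but as written it has a gap.

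The gap is where your bounding disc lives. To conclude that attaching the $2$-handle $D^2\times D^n$ to $W=\Gamma_{\ell-1}$ along $\phi_\ell$ yields $W\natural(S^2\times D^n)$, you must isotope $\phi_\ell$ \emph{inside $\partial W$} into a coordinate ball in standard position, which requires the framed attaching circle to bound a framed $2$-disc in the $(n+1)$-manifold $\partial\Gamma_{\ell-1}$ -- not in the $(n+2)$-manifold $\Gamma_{\ell-1}$. Your disc (the cylinder $\pi^{-1}(\gamma)$ capped by the core of the first $2$-handle) is, after the general-position push you invoke, a properly embedded disc in the \emph{interior} of $\Gamma_{\ell-1}$. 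Bounding a framed disc in the interior does not determine the framing in $\partial W$: for instance if $W=S^2\ttimes D^n$ and $C\subset\partial W$ is a small unknot, one can tube an interior disc bounding $C$ with the zero-section $S^2$ (which has non-trivial normal bundle) and thereby realise the \emph{non}-bounding framing of $C$ by a framed interior disc; attaching a $2$-handle with that framing gives $W\natural(S^2\ttimes D^n)$, not $W\natural(S^2\times D^n)$. So "bounds a framed disc in $\Gamma_{\ell-1}$" is simply not the right criterion. What you actually need is a framed disc inside $\partial\Gamma_{\ell-1}$: take $\gamma$ to a point $q\in\partial D^n_{p_1}$ (avoiding the interiors of the surgered discs), so $\pi^{-1}(\gamma)\subset\partial\Gamma_{\ell-1}$, and cap with the meridian disc $\{q\}\times D^2$ of the first surgery cap $S^{n-1}\times D^2\subset\partial\Gamma_{\ell-1}$. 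Even then, verifying that $\varphi_\ell$, transported along $\pi^{-1}(\gamma)$, agrees with the product framing coming from $\{q\}\times D^2$ is not a step to wave off as "routine": it is precisely the point where $\Sigma_e M$ and $\widetilde{\Sigma}_e M$ diverge (the wrong framing would produce an $S^2\ttimes S^{n-1}$ summand), and the verification only goes through because the cap was glued by the \emph{identity} and all the relevant framings come from trivialisations of the principal $S^1$-bundle over a contractible region of $M$. That computation is the real content of the lemma and should be carried out.
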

	\begin{proof}
		The space $P'$ is the result of plumbing according to the following graph.
		\begin{center}
			\begin{tikzpicture}
				\begin{scope}[every node/.style={circle,draw,minimum height=2.5em}]
					\node (U) at (1,0) {$\overline{\pi}$};
					\node[label=center:$\underline{D}^n_{S^2}$] (G1) at (3,1) {\phantom{0}};
					\node[label=center:$\underline{D}^n_{S^2}$] (Gn) at (3,-1) {\phantom{0}};
					\node[draw=none] (dots) at (2,0) {$\rvdots_\ell$};
				\end{scope}
				\path[-](U) edge["{$\scriptstyle +$}"] (G1);
				\path[-](Gn) edge["{$\scriptstyle +$}"] (U);
			\end{tikzpicture}
		\end{center}
		By \cite[Proposition 3.2]{Re22a} we can remove all except one edge of this graph without changing the diffeomorphism type. In particular, $P'$ is diffeomorphic to the connected sum of $\Sigma_e M$ and $(\ell-1)$ copies of $S^2\times S^{n-1}$.		
	\end{proof}
	
	\begin{remark}\label{R:TW_TW_SUSP}
		In \cite{GR23} another suspension operation, denoted $\widetilde{\Sigma}_e$, is introduced, which is obtained by replacing the identity map on $S^{n-1}\times S^1$ in Definition \ref{D:twist_susp} by a map $\tilde{\alpha}\colon S^{n-1}\times S^1\to S^{n-1}\times S^1$ defined by $\tilde{\alpha}(x,y)=(\alpha_y x,y)$, where $\alpha\colon S^1\to\mathrm{SO}(n)$ represents a generator of $\pi_1(\mathrm{SO}(n))$. In general, the operations $\Sigma_e$ and $\widetilde{\Sigma}_e$ do not coincide. For example, if $M$ is a simply-connected non-spin manifold and $e\in H^2(M;\Z)$ satisfies $e\equiv w_2(M)\mod 2$, then $\Sigma_e M$ is spin, while $\widetilde{\Sigma}_e M$ is non-spin, see \cite[Lemma 5.2]{GR23}.
		
		However, if the universal cover of the total space $P$ of the principal $S^1$-bundle $P\xrightarrow{\pi} M$ is non-spin, then the spaces $\Sigma_e M$ and $\widetilde{\Sigma}_eM$ are in fact diffeomorphic. Indeed, in this case, by \cite[Proposition 3.6]{GR23}, any two embeddings $S^1\times D^n\hookrightarrow P$ that induce the same map on fundamental groups are isotopic. In particular, the embeddings $\varphi_\pi$ and $\varphi_\pi\circ\tilde{\alpha}$ are isotopic, which shows that the spaces $\Sigma_e M$ and $\widetilde{\Sigma}_eM$ are diffeomorphic.
		
		To characterise when the universal cover $\widetilde{P}$ of $P$ is non-spin, consider the pull-back of $P\xrightarrow{\pi}M$ along the universal cover $\widetilde{M}\xrightarrow{\pi_M}M$ of $M$ and denote the resulting space by $\hat{P}$. Then $\hat{P}\to\widetilde{M}$ is a principal $S^1$-bundle with Euler class $\pi_M^*e$ and $\hat{P}\to P$ is a covering. If $\hat{P}\to\widetilde{M}$ is the trivial bundle, then the universal cover is given by $\R\times \widetilde{M}$, which is spin if and only if $\widetilde{M}$ is spin. If $\hat{P}\to\widetilde{M}$ is non-trivial, denote by $d\in\N$ the divisibility of $\pi_M^*e\in H^2(\widetilde{M};\Z)$ (and note that this group is torsion-free since $\widetilde{M}$ is simply-connected). Then the total space $\widetilde{P}$ of the principal $S^1$-bundle over $\widetilde{M}$ with Euler class $\frac{1}{d}\pi^*e$ is simply-connected (see e.g.\ \cite[Lemma 2.3]{GR23}) and covers $\hat{P}$ and is therefore the universal cover of $\hat{P}$ and $P$. By \cite[Corollary 2.6]{GR23}, $\widetilde{P}$ is non-spin if and only if $\widetilde{M}$ is non-spin and $\frac{1}{d}\pi_M^*e\not\equiv w_2(\widetilde{M})\mod 2$.
		
		If one restricts to the suspension operations $\Sigma_0$ and $\widetilde{\Sigma}_0$, then it was shown in \cite{Du22} that for a manifold $M^n$ the spaces $\Sigma_0 M$ and $\widetilde{\Sigma}_0 M$ are diffeomorphic whenever $M$ is \emph{$\Sigma$-stable}, that is, whenever the map $\mathrm{Diff}(M,x_0)\to\mathrm{GL}(n)$, which assigns to a diffeomorphism of $M$ fixing $x_0\in M$ its differential on $T_{x_0}M$, induces a surjective map on fundamental groups. Examples of $\Sigma$-stable manifolds include spheres (see \cite[Proposition 3.2]{Du22}) and products $M_1\times M_2$ where at least one factor is $\Sigma$-stable (see \cite[Corollary 3.1]{Du22}).
	\end{remark}

	\section{Positive Ricci curvature on Twisted Suspensions}\label{S:Ric>0}
	
	To prove Theorem \ref{T:Twist_sus_Ric} we will show the following more general result.
	
	\begin{theorem}\label{T:Twist_sus_Ric_multiple}
		Let $M^n$ be a closed manifold of dimension $n\geq 3$ that admits a Riemannian metric of positive Ricci curvature and let $e\in H^2(M;\Z)$. Let $P\xrightarrow{\pi}M$ be the principal $S^1$-bundle with Euler class $e$ and let $\varphi_1,\dots,\varphi_\ell\colon D^n\times S^1\hookrightarrow P$ be local trivializations covering embeddings $D^n\hookrightarrow M$ with pairwise disjoint image. Then the space
		\begin{equation}\label{EQ:gluing}
			P'=\left(P\setminus\bigcup_i\varphi_i(D^n\times S^1)^\circ\right) \cup_{\textup{id}_{\sqcup_i (S^{n-1}\times S^1) } }\left(\bigsqcup_i(S^{n-1}\times D^2)\right)
		\end{equation}
		admits a Riemannian metric of positive Ricci curvature.
	\end{theorem}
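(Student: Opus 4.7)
The plan combines three ingredients: the construction of positive Ricci submersion metrics on principal $S^1$-bundles from Lemma \ref{L:g_Ric} and Corollary \ref{C:g_Ric_tot-geod}, a local \emph{untwisting} of the connection form near each surgery disk, and a doubly warped product filling of $S^{n-1}\times D^2$ in the spirit of Sha-Yang. Since the $\ell$ surgery disks are pairwise disjoint, it suffices to perform the construction near a single $\varphi_i$ and repeat independently at each. Starting from a metric $\check g$ of positive Ricci curvature on $M$, I would first deform $\check g$, preserving positive Ricci curvature, so that on the embedded disk underlying $\varphi_i$ it agrees with a rotationally symmetric warped product $dr^2 + h(r)^2 g_{S^{n-1}}$ on an annular region $\{r_1 \le r \le r_0\}$ for a suitable positive function $h$. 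Using Proposition \ref{P:CURV_FORM}, I then choose a connection form $A$ on $\pi\colon P\to M$ with harmonic curvature $\mathcal{F}$, and consider a metric $g_P = \pi^*\check g + e^{2\phi(r)} A^2$, where $\phi$ is small and constant outside the surgery disks; on the complement of $\bigcup_i \varphi_i(D^n)$ positivity of Ricci curvature follows from Corollary \ref{C:g_Ric_tot-geod} together with the Gilkey--Park--Tuschmann perturbation of $\phi$.

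The crux is the deformation on $\{r \le r_0\}\times S^1$. In the local trivialization provided by $\varphi_i$, one can write $A = dt + \pi^*\alpha_i$ with $d\alpha_i = \mathcal{F}|_{D^n}$. I would replace $A$ by the family $A_\chi = dt + \chi(r)\,\pi^*\alpha_i$, where $\chi$ is a radial cutoff with $\chi \equiv 1$ near $r_0$ and $\chi \equiv 0$ near $r_1$, and simultaneously deform the conformal factor $\phi(r)$ and the base warping $h(r)$. The resulting doubly warped submersion metric
\[
g_\chi \;=\; \pi^*\!\left(dr^2 + h(r)^2 g_{S^{n-1}}\right) + e^{2\phi(r)} A_\chi^2
\]
still has its Ricci tensor computable via Lemma \ref{L:g_Ric}, now applied to the trivial bundle $D^n\times S^1\to D^n$ with the (non-flat) connection $A_\chi$, whose curvature is $\chi'(r)\,dr\wedge\pi^*\alpha_i + \chi(r)\,\pi^*\mathcal{F}|_{D^n}$. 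By shrinking the disk, choosing $\phi$ strictly concave on the annulus so that its Hessian produces a definite positive contribution to $\Ric(\partial_r,\partial_r)$, and choosing $h$ with negative second derivative, the indefinite-sign terms involving $\chi'$ can be dominated. At $r = r_1$ the metric reduces to the Riemannian product $dr^2 + h(r_1)^2 g_{S^{n-1}} + \varepsilon^2 dt^2$, so the twist is completely removed on a neighborhood of the fibre circle $\{0\}\times S^1$.

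On this product neighborhood I replace $\{r \le r_1\}\times S^1$ by $S^{n-1}\times D^2$ carrying a Sha-Yang type doubly warped metric $ds^2 + f(s)^2 g_{S^{n-1}} + k(s)^2 dt^2$ with $k(0)=0$, $k'(0)=1$, $f(0)>0$, matching the product to second order at the interface. The Ricci curvatures of such a doubly warped metric are explicit second-order expressions in $f$ and $k$; since $n-1\ge 2$, standard choices yield positive Ricci curvature on the filling, and performing this construction at each $\varphi_i$ produces the required metric on $P'$.

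The principal obstacle is the untwisting step: the derivative $\chi'$ enters $\Ric(T,\partial_r)$ (through the $-\delta\mathcal{F}(X)+3\mathcal{F}(X,\nabla\phi)$ term) and $\Ric(\partial_r,\partial_r)$ (through the $\sum_i \mathcal{F}(\partial_r,e_i)^2$ term) with indefinite sign, and a priori grows like $(r_0-r_1)^{-1}$. Controlling these contributions simultaneously by the Hessian of $\phi$, the base Ricci curvature, and the fiber term $\tfrac{1}{4}e^{2\phi}\|\mathcal{F}_\chi\|^2$ requires a delicate balance between the cutoff width, the warping $h$, the conformal factor $\phi$, and the scale of $\alpha_i$. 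Producing a single profile that is compatible on the outside with the ambient submersion metric, on the inside with a Riemannian product, and throughout with positive Ricci curvature, is the new ingredient beyond Sha-Yang, and is precisely what the introduction singles out as the main difficulty for non-trivial principal $S^1$-bundles.
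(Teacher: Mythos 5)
Your high-level architecture matches the paper: a connection metric of the form $\pi^*\check g + e^{2\phi}A^2$ on the complement of the surgery regions, an interpolation (``untwisting'') of the connection form to $dt$ near the fibre circle, a doubly warped product filling of $S^{n-1}\times D^2$, and a $C^1$-gluing. You also correctly single out the untwisting step as the crux. But you do not close that step, and the mechanism you hint at is not the right one.

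The missing idea is that \emph{all} terms in Lemma~\ref{L:g_Ric} involving the curvature form $\mathcal F$ of the interpolated connection carry a factor of $e^{\phi}$ or $e^{2\phi}$. The paper therefore does not try to dominate the $\chi'$-type terms by concavity of $\phi$ or of the base warping (as you propose); instead it scales the fibre warping $h$ uniformly by a factor $r\to 0$ (Lemma~\ref{L:warping_fcts_smoothing} and Proposition~\ref{P:g_Ric>0}). This makes every $\mathcal F$-contribution -- including $\Ric(T,\partial_s)$ and the $\sum_i\mathcal F(\partial_s,e_i)^2$ term in $\Ric(\partial_s,\partial_s)$ -- tend to zero uniformly on the compact annulus, without any delicate balancing and without estimating derivatives of the cutoff $\chi$. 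For this to yield positive Ricci curvature in the limit, the underlying doubly warped product inequalities must hold \emph{strictly}; this is precisely what Lemma~\ref{L:warping_fcts} establishes by modifying the Sha--Yang profile (in which two of the inequalities are only non-strict). Your proposal omits both the scaling-in-$h$ mechanism and the need to upgrade the Sha--Yang inequalities to strict ones, so the ``delicate balance'' you describe remains unresolved. A secondary, smaller deviation: the paper deforms $\check g$ to have constant curvature $1$ near each $x_i$ (so the base warping near $s=s_\lambda$ is exactly $N\sin((s-s')/N)$ and the gluing with the outside is $C^\infty$ after rescaling), whereas a generic rotationally symmetric annulus as in your proposal would require additional matching of the base profile.
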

	Using Lemma \ref{L:Twist_susp_multiple} we obtain the following corollary, which implies Theorem \ref{T:Twist_sus_Ric} for $\ell=1$.
	\begin{corollary}\label{C:Twist_sus_conn_sum}
		Let $M^n$ be a closed manifold of dimension $n\geq 3$ that admits a Riemannian metric of positive Ricci curvature and let $e\in H^2(M;\Z)$. Then for any $\ell\in\N$ the manifold $\Sigma_e M\#_{\ell-1}(S^2\times S^{n-1})$ admits a Riemannian metric of positive Ricci curvature.
	\end{corollary}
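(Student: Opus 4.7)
The plan is to equip $P$ with a Bérard-Bergery--type submersion metric that can be brought into an explicit doubly warped product form on each tube $\varphi_i(D^n\times S^1)$, and then to carry out the surgery by interpolating with a Sha--Yang--style doubly warped metric on $S^{n-1}\times D^2$. Because the principal bundle is genuinely twisted, the submersion metric is not a product on the tube for an arbitrary connection; the key preparatory step is to choose the connection form so that its curvature $\mathcal{F}$ vanishes on a neighbourhood of each embedded disc $\varphi_i(D^n)\subset M$, so that after a gauge transformation the connection form equals $dt$ along each tube.

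Explicitly, I would first deform the given positive-Ricci metric $\check g_0$ on $M$, on a neighbourhood of each $\varphi_i(D^n)$, to a metric $\check g$ of positive Ricci curvature that agrees with a rotationally symmetric model $dr^2+h(r)^2\,ds^2_{S^{n-1}}$ on a small closed ball $\overline B_{r_0}(p_i)$; this is a standard radial modification preserving $\Ric>0$. Then, starting from any 2-form $\omega$ representing $-2\pi e\in H^2_{dR}(M)$, I would subtract compactly supported exact forms $d(\rho_i\eta_i)$, using a primitive $\eta_i$ of $\omega|_{B_{r_0}(p_i)}$ and bump functions $\rho_i$, to obtain a representative vanishing on a smaller neighbourhood $V_i$ of each $\varphi_i(D^n)$. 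Invoking Proposition~\ref{P:CURV_FORM} yields a connection form $A$ with $\mathcal{F}|_{V_i}=0$, and after the corresponding gauge change one has $A=dt$ on $\pi^{-1}(V_i)$. Choosing a smooth radial function $\phi$ that is constant outside the $V_i$, the submersion metric
\[
 g = \pi^*\check g + e^{2\phi}A^2
\]
takes the doubly warped form $dr^2 + h(r)^2\,ds^2_{S^{n-1}} + e^{2\phi(r)}\,dt^2$ on each tube, while Corollary~\ref{C:g_Ric_tot-geod} combined with $\Ric_{\check g}>0$ guarantees $\Ric(g)>0$ elsewhere once $\|\mathcal{F}\|$ and $e^{\phi}$ are small.

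The core of the proof, and the main obstacle, is then to replace the interior of each tube by a new doubly warped metric
\[
 \tilde g = dr^2 + \tilde h(r)^2\,ds^2_{S^{n-1}} + \tilde f(r)^2\,d\theta^2
\]
on $[0,R]\times S^{n-1}\times S^1$ realising the surgery. I need (i) $\tilde f(0)=0$ with $\tilde f'(0)=1$ and $\tilde h(0)>0$ an even function of $r$, so that the $\theta$-circle collapses smoothly and the resulting model is a smooth metric on $S^{n-1}\times D^2$; (ii) a $C^1$ match with $(h,e^{\phi})$ at $r=R$; and (iii) strict positivity of all Ricci curvatures. Writing the standard doubly warped Ricci formulas, this reduces to the three inequalities
\[
 -(n-1)\frac{\tilde h''}{\tilde h}-\frac{\tilde f''}{\tilde f}>0,\quad
 -\frac{\tilde h''}{\tilde h}-(n-2)\frac{(\tilde h')^2-1}{\tilde h^2}-\frac{\tilde h'\tilde f'}{\tilde h\tilde f}>0,\quad
 -\frac{\tilde f''}{\tilde f}-(n-1)\frac{\tilde h'\tilde f'}{\tilde h\tilde f}>0.
\]
Choosing $\tilde h$ close to a small positive constant $\varepsilon$ (concave, with $|\tilde h'|\ll 1$) and $\tilde f$ close to $\delta\sin(r/\delta)$ near $r=0$ for $\delta\ll\varepsilon$, interpolated to match $e^{\phi(R)}$ at $r=R$, the first and third inequalities are controlled by the strictly positive $-\tilde f''/\tilde f$ term, while the middle inequality is secured by the large positive contribution $(n-2)(1-(\tilde h')^2)/\tilde h^2$.

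The delicate book-keeping is the $C^1$-matching at $r=R$: one must simultaneously prescribe the values and derivatives of $\tilde h,\tilde f$ to equal $h(r_0),h'(r_0)$ and $e^{\phi(r_0)},(e^{\phi})'(r_0)$, while preserving the three positivity inequalities throughout $[0,R]$. This is arranged by shrinking $r_0$ and choosing $\phi$ so that the outer profile is already close to the inner profile at the matching radius. Smoothing the resulting $C^1$-metric by a small mollification preserves the strict inequalities, since positive Ricci curvature is $C^2$-open, and applying the construction simultaneously in each of the $\ell$ disjoint tubes yields the desired positive Ricci curvature metric on
\[
 P' = \biggl(P\setminus\bigcup_i\varphi_i(D^n\times S^1)^\circ\biggr)\cup\biggl(\bigsqcup_i S^{n-1}\times D^2\biggr),
\]
completing the proof of Theorem~\ref{T:Twist_sus_Ric_multiple}.
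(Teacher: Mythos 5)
Your plan --- subtracting a compactly supported exact form from $-2\pi e$ so that $\mathcal{F}$ vanishes on a neighbourhood $V_i$ of each disc and then taking $A=dt$ there --- is precisely the naive strategy that the introduction of the paper flags as the central obstruction: it is not clear that one can bring an $S^1$-bundle metric into product form on a given local trivialization while keeping non-negative Ricci curvature. The concrete gap is this: once you destroy harmonicity, $\delta\mathcal{F}\not\equiv 0$ on the annular region $B_{r_0}(p_i)\setminus V_i$ where the gauge is being untwisted. There $\phi$ is constant, so by Corollary~\ref{C:g_Ric_tot-geod} the mixed Ricci curvature is $\Ric(T,X)=-\tfrac{e^\phi}{2}\delta\mathcal{F}(X)$, while $\Ric(T,T)=\tfrac{e^{2\phi}}{4}\|\mathcal{F}\|^2$ and $\Ric(X,X)=\Ric_{\check g}(X,X)+O(e^{2\phi})$. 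The determinant of the $2\times 2$ block in the $T,X$ directions is therefore
\begin{align*}
\Ric(T,T)\Ric(X,X)-\Ric(T,X)^2
 &=\frac{e^{2\phi}}{4}\Bigl(\|\mathcal{F}\|^2\,\Ric_{\check g}(X,X)-\delta\mathcal{F}(X)^2\Bigr)+O(e^{4\phi}),
\end{align*}
whose sign as $e^\phi\to 0$ is governed by $\|\mathcal{F}\|^2\Ric_{\check g}(X,X)-\delta\mathcal{F}(X)^2$. Near $\partial V_i$ one has $\mathcal{F}\to 0$ while $\delta\mathcal{F}\neq 0$ (generically it cannot vanish where $\mathcal{F}$ transitions from $0$ to a nonzero harmonic form), so this quantity is negative and the Ricci tensor is indefinite for \emph{every} small $e^\phi$. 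Shrinking $e^\phi$ rescales the diagonal and off-diagonal entries by compatible powers and cannot fix the sign; you are implicitly asserting that only $\Ric(T,T)$ and $\Ric(X,X)$ matter, but the term $\Ric(T,X)$ is precisely the one that breaks the argument.

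The paper's route is different exactly at this point. It keeps $\mathcal{F}$ harmonic on all of $M$ (so $\delta\mathcal{F}\equiv 0$ and the mixed terms vanish on $P\setminus\bigcup_i\varphi_i(D^n\times S^1)^\circ$) and does \emph{not} flatten the connection on the tube. On $[0,s_\lambda]\times S^{n-1}\times S^1$ it takes an arbitrary connection form extending the harmonic one, required to be $dt$ only near $s=0$, and then kills every $\mathcal{F}$- and $\delta\mathcal{F}$-dependent Ricci term not by making $\mathcal{F}$ zero but by scaling the circle warping function $h\mapsto r\cdot h$. Because the Sha--Yang inequalities in Lemma~\ref{L:warping_fcts} are arranged to hold \emph{strictly}, the scale-invariant warping contributions to the diagonal Ricci curvatures have a positive lower bound, while every term involving $\mathcal{F}$ or $\delta\mathcal{F}$ carries a positive power of $r$ and hence vanishes as $r\to 0$; this is the content of Proposition~\ref{P:g_Ric>0}. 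The remainder of your outline --- the rotationally symmetric deformation around each $p_i$, the three doubly warped inequalities, and the $C^1$ gluing followed by a smoothing/deformation to strict positivity --- is in the right spirit and parallels Lemmas~\ref{L:warping_fcts} and \ref{L:warping_fcts_smoothing}, but without the harmonicity of $\mathcal{F}$ and the $r$-scaling of $h$ the construction does not close.
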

	
	To prove Theorem \ref{T:Twist_sus_Ric_multiple} we view $S^{n-1}\times D^2$ as the space obtained from $[0,s_\lambda]\times S^{n-1}\times S^1$ for some $s_\lambda>0$ (which will be specified later) by collapsing each circle $\{0\}\times \{v\}\times S^1$, where $v\in S^{n-1}$. On the product $[0,s_\lambda]\times S^{n-1}\times S^1$, which we view as a principal $S^1$-bundle, we then define a connection metric which we can glue to a connection metric on $P\setminus\bigcup_i\varphi_i(D^n\times S^1)^\circ$. We will now describe the metrics on each part.
	
	\subsection{The metric on $P\setminus\bigcup_i\varphi_i(D^n\times S^1)^\circ$}\label{SS:metric1} Let $x_1,\dots,x_\ell\in M$ be pairwise distinct points. We can now deform the metric of positive Ricci curvature on $M$ into a metric of positive Ricci curvature that has constant sectional curvature $1$ in a neighbourhood of each point $x_i$, see e.g.\ \cite{Wr02}, \cite[Corollary 4.4]{RW23} and Lemma~\ref{L:equiv_deform} below. We denote this metric by $\check{g}$. Thus, we have isometric embeddings $\check{\varphi}_1,\dots,\check{\varphi}_\ell\colon D^n\hookrightarrow M$, where on $D^n$ we consider the induced metric of a geodesic ball of some radius $\varepsilon>0$ in the round sphere of radius $1$. By the disc theorem of Palais \cite{Pa59} we can assume that these embeddings are the original embeddings $D^n\hookrightarrow M$ and we obtain local trivializations $\varphi_1,\dots,\varphi_\ell$ covering these embeddings.
	
	By the Hodge Theorem, see e.g.\ \cite[Theorem 4.16]{Mo01}, there exists a unique harmonic 2-form $\omega$ on $M$ that represents the class $e$. By Proposition \ref{P:CURV_FORM} there exists a connection form $A$ on $P$ with curvature form $\mathcal{F}=-2\pi\omega$. In particular, $\mathcal{F}$ is harmonic, so $\delta\mathcal{F}=0$. For a constant $\phi>0$ let $g_\phi$ be the metric \eqref{EQ:g}. It follows from Corollary \ref{C:g_Ric_tot-geod} that $g_\phi$ has non-negative Ricci curvature for $\phi$ sufficiently small.
	
	\subsection{The metric on $[0,s_\lambda]\times S^{n-1}\times S^1$} On $[0,s_\lambda]\times S^{n-1}\times S^1$ we define a doubly warped submersion metric as follows. Let $s_\lambda>0$ and let $f,h\colon [0,s_\lambda]\to[0,\infty)$ be smooth functions so that $f$ is strictly positive and $h$ only vanishes at $s=0$. On $[0,s_\lambda]\times S^{n-1}$ we then define the metric
	\[ \check{g}_f=ds^2+f(s)^2ds_{n-1}^2. \]
	On the principal $S^1$-bundle $[0,s_\lambda]\times S^{n-1}\times S^1$ we choose an arbitrary connection form $A$ that is given by $A=dt$ in a neighbourhood of $s=0$. We then define the metric \eqref{EQ:g} given by
	\[g_{f,h}=\check{g}_f+e^{2\phi}A^2, \]
	where $\phi=\ln(h)$. Since $A=dt$ in a neighbourhood of $s=0$, the metric $g_{f,h}$ is a doubly warped product metric in this neighbourhood. By \cite[Proposition 1.4.7]{Pe16} we obtain a smooth metric on the quotient $S^{n-1}\times D^2$ if and only if $f$ is even at $s=0$ and $h$ is odd at $s=0$ with $h'(0)=1$.	
	
	We will use the following warping functions.
	\begin{lemma}\label{L:warping_fcts}
		For every $\lambda\in(0,1)$ there exists $s_\lambda>0$ and smooth functions $f,h\colon [0,s_\lambda]\to[0,\infty)$ satisfying the differential inequalities
		\begin{align}
			& \label{INEQ:1}-(n-1)\frac{f''}{f}-\frac{h''}{h}>0,\\
			& \label{INEQ:2}-\frac{f''}{f}+(n-2)\frac{1-{f'}^2}{f^2}-\frac{f'h'}{fh}>0,\\
			& \label{INEQ:3}-\frac{h''}{h}-(n-1)\frac{f'h'}{fh}> 0,\\
			& \label{INEQ:4}f>0,\quad h|_{(0,s_\lambda]}>0,
		\end{align}
		and the boundary conditions
		\begin{enumerate}
			\item $f$ is even at $s=0$ and $h$ is odd at $s=0$ with $h'(0)=1$,
			\item There exist $N>0$, $s'\in\R$ such that $f(s)=N\sin\left(\frac{s-s'}{N}\right)$ in a neighbourhood of $s=s_\lambda$ and $f'(s_\lambda)=\lambda$, and
			\item All derivatives of $h$ vanish at $s=s_\lambda$.
		\end{enumerate}
	\end{lemma}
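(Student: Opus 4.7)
The inequalities (\ref{INEQ:1})--(\ref{INEQ:3}) are precisely the Ricci-positivity conditions in the three orthogonal directions for the doubly warped metric $ds^2 + f(s)^2 ds_{n-1}^2 + h(s)^2 dt^2$. My plan is to construct $f$ and $h$ as piecewise interpolations between three explicit pieces: a constant cap on $[0, s_1]$ that handles the smoothness at $s = 0$, the prescribed sine/flat model on $[s_2, s_\lambda]$ that handles the boundary conditions at $s = s_\lambda$, and a smooth transitional interpolation on $[s_1, s_2]$. The parameters I will tune are the length $s_\lambda$, the sphere radius $N$, an auxiliary $\alpha > 0$ shaping $h$ near $s=0$, and a height $h_\infty > 0$ matching the outer metric.

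For the model piece, set $a = \arccos\lambda \in (0, \pi/2)$ and $s' = s_\lambda - Na$, giving $f_{\textup{end}}(s) = N\sin((s-s')/N)$ with $f_{\textup{end}}'(s_\lambda) = \lambda$ as required; here $-f''_{\textup{end}}/f_{\textup{end}} = 1/N^2 > 0$ contributes positively to (\ref{INEQ:1}) and (\ref{INEQ:2}). For the cap, take $f \equiv f_* > 0$ constant and $h(s) = \sin(\alpha s)/\alpha$: then $f$ is trivially even, $h$ is odd with $h'(0) = 1$, and the three inequalities reduce to $\alpha^2 > 0$, $(n-2)/f_*^2 > 0$ (using $n \geq 3$), and $\alpha^2 > 0$, respectively. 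The middle uses a smooth cutoff $\chi$ with $\chi \equiv 0$ on $[0, s_1]$ and $\chi \equiv 1$ on $[s_2, s_\lambda]$, defining $f(s) = (1-\chi(s))f_* + \chi(s) f_{\textup{end}}(s)$ with $f_* = f_{\textup{end}}(s_1)$ so that $f$ is monotone and positive, and similarly interpolating $h$ from $\sin(\alpha s)/\alpha$ to the constant $h_\infty$; for the flatness required at $s = s_\lambda$ I take $h(s) = h_\infty - \varepsilon\exp(-(s_\lambda - s)^{-2})$ in a left neighbourhood of $s_\lambda$, which is $C^\infty$-flat there. Continuity of the ratios $f''/f$, $h''/h$, $f'h'/(fh)$ on the compact interval, together with a sufficiently large choice of $\alpha$, lets $-h''/h = \alpha^2$ dominate the cross terms in (\ref{INEQ:1}) and (\ref{INEQ:3}) throughout the cap and transition, while (\ref{INEQ:2}) holds because $f' \leq \lambda < 1$ forces $(n-2)(1-f'^2)/f^2 \geq (n-2)(1-\lambda^2)/f^2$ to dominate.

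The main technical obstacle will be securing (\ref{INEQ:3}) near $s = s_\lambda$, where $h$ is $C^\infty$-flat so that both $-h''/h$ and $(n-1)f'h'/(fh)$ tend to zero and one must show the former dominates. Writing $\psi := h_\infty - h = \varepsilon\exp(-(s_\lambda - s)^{-2})$, a direct computation yields $\psi''/|\psi'| \sim 2(s_\lambda - s)^{-3} \to +\infty$ as $s \to s_\lambda^-$, while $f'/f$ stays bounded; hence the expression $-h''/h - (n-1)(f'/f)(h'/h) = h^{-1}(\psi'' + (n-1)(f'/f)\psi')$ remains strictly positive on $[s_\lambda - \delta, s_\lambda)$ for some $\delta > 0$. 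Combined with the straightforward cap and transition verifications this yields (\ref{INEQ:3}) on all of $[0, s_\lambda)$; the degeneration to equality at the single point $s_\lambda$ is compatible with the surrounding construction because at the gluing the outer metric contributes the positive term $(h_\infty^2/4)\|\mathcal{F}\|^2$ to the total Ricci curvature in the $\partial_t$-direction.
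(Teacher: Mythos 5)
Your approach is genuinely different from the paper's. The paper constructs a single smooth $f$ as the solution of the ODE $f''=\tfrac{\alpha\lambda_0^2}{2}f^{-\alpha-1}$ (with $f(0)=1$, $f'(0)=0$), sets $h=\tfrac{2}{\alpha\lambda_0^2}f'$, and then verifies \eqref{INEQ:1}--\eqref{INEQ:3} by an explicit algebraic computation that works uniformly on the whole interval before a small boundary modification. You instead propose a piecewise construction (constant cap $\cup$ cutoff interpolation $\cup$ sine end). Unfortunately your sketch has genuine gaps precisely where the paper's ODE ansatz does the real work.

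The central difficulty you have under-weighted is the transition region for $f$. Since $f'(0)=0$ and $f'(s_\lambda)=\lambda>0$, while on the sine piece $f''=-f/N^2<0$, the second derivative $f''$ must be strictly positive on a set of positive measure inside $[s_1,s_2]$, and in fact $\int_{[s_1,s_2]}f''>\lambda$. At those points $-f''/f<0$, which is a negative contribution to \emph{both} \eqref{INEQ:1} and \eqref{INEQ:2}. Your argument that ``a sufficiently large choice of $\alpha$ lets $-h''/h=\alpha^2$ dominate'' is only valid on the cap $[0,s_1]$ where $h=\sin(\alpha s)/\alpha$; in the transition region $h$ is no longer that function, so $-h''/h$ bears no relation to $\alpha^2$ and may even be negative if the interpolated $h$ has $h''>0$ anywhere. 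Moreover, increasing $\alpha$ shrinks the cap's length to $O(1/\alpha)$, so it does not help the interpolation. For \eqref{INEQ:2} your claim that $f'\le\lambda<1$ is incorrect: on the sine piece $f'(s)=\cos((s-s')/N)>\cos a=\lambda$ for $s<s_\lambda$, so $f'$ \emph{exceeds} $\lambda$ there (and a fortiori $f'(s_2)>\lambda$, so the transition must carry $f'$ even further). Finally, the cutoff $\chi$ injects $\chi''$-terms of size $O((s_2-s_1)^{-2})$ into $f''$, which can overwhelm the bounded terms $(n-2)(1-f'^2)/f^2$ unless the transition length is chosen carefully, and you have not supplied this analysis. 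The paper sidesteps all of this: its $f$ has $f''>0$ \emph{everywhere}, but the compensating $-h''/h=\tfrac{\alpha(\alpha+1)}{2}\lambda_0^2f^{-\alpha-2}$ is automatically large because of the coupling $h=cf'$, which is exactly why the three inequalities reduce to elementary algebra in $\alpha$, $\lambda_0$, $f$.

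Two smaller remarks. Your analysis of \eqref{INEQ:3} near $s=s_\lambda$ (showing $\psi''/|\psi'|\to\infty$ beats the bounded $f'/f$) is correct and is in fact essentially the same estimate the paper uses after it replaces $h$ by $\tilde h$ with $\tilde h'=\psi h'$. Your closing observation — that the degeneracy of \eqref{INEQ:3} to equality at $s=s_\lambda$ is harmless because $\Ric(T,T)$ receives the additional nonnegative term $\tfrac{h^2}{4}\|\mathcal F\|^2$ — is a legitimate remark about how the lemma is used in Proposition \ref{P:g_Ric>0}, but it does not salvage the stated strict inequality on $[0,s_\lambda]$; you should flag this as a deliberate weakening rather than fold it silently into the proof.
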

	Note that the boundary conditions in particular imply that $f'(0)=h(0)=0$.
	\begin{proof}
		The construction in \cite{SY91} already provides functions $f$ and $h$ that satisfy all required conditions, except that \eqref{INEQ:1} and \eqref{INEQ:2} merely hold as non-strict inequalities. Since it will be crucial that these hold strictly in our case, we modify this construction as follows.
		
		Let $\lambda_0\in(\lambda,1)$ and choose $\alpha\in(n-2,\frac{n-2}{\lambda_0^2})$. Now define $f\colon [0,\infty)\to (0,\infty)$ as the unique smooth function satisfying
		\begin{align*}
			f(0)&=1,\\
			f'(0)&=0,\\
			f''&= \frac{\alpha\lambda_0^2}{2}f^{-\alpha-1}.
		\end{align*}
		From the definition it follows that $f''>0$, and since $f'(0)=0$, the derivative $f'$ is positive and monotone increasing on $(0,\infty)$. In particular, $f(s)$ converges to $\infty$ as $s\to\infty$.
		
		By integrating both sides of the equation $f'' f'=\frac{\alpha\lambda_0^2}{2}f^{-\alpha-1}f'$ we obtain
		\[ f'^2=\lambda_0^2\left(1- f^{-\alpha}\right). \]
		Hence, since $f(s)\to \infty$ as $s\to\infty$, it follows that $f'$ converges to $\lambda_0>\lambda$. In particular, there exists $s_\lambda>0$ with $f'(s_\lambda)=\lambda$.		
		
		Next, we define $h\colon [0,\infty)\to[0,\infty)$ by
		\[ h=\frac{2}{\alpha\lambda_0^2}f'. \]
		We then have
		\begin{align*}
			h'&= f^{-\alpha-1},\\
			h''&=-(\alpha+1)f^{-\alpha-2}f'. 
		\end{align*}
		One can show now inductively that $f$ is an even function and $h$ is an odd function at $s=0$. In particular, the required boundary conditions at $s=0$ and inequalities \eqref{INEQ:4} are all satisfied. For the inequalities \eqref{INEQ:1}--\eqref{INEQ:3} we calculate
		\begin{align*}
			\frac{f''}{f}&= \frac{\alpha\lambda_0^2}{2}f^{-\alpha-2},\\
			\frac{h''}{h}&=-\frac{\alpha(\alpha+1)}{2}\lambda_0^2 f^{-\alpha-2},\\
			\frac{1-f'^2}{f^2}&=\frac{1-\lambda_0^2+\lambda_0^2f^{-\alpha}}{f^2},\\
			\frac{f'h'}{fh}&= \frac{\alpha\lambda_0^2}{2}f^{-\alpha-2}. 
		\end{align*}
		From this it follows immediately that inequalities \eqref{INEQ:1} and \eqref{INEQ:3} are satisfied. For \eqref{INEQ:2} we use that $f\geq 1$ and $(n-2)<\alpha<\frac{n-2}{\lambda_0^2}$ as follows:
		\begin{align*}
			-\frac{f''}{f}+(n-2)\frac{1-{f'}^2}{f^2}-\frac{f'h'}{fh}&=f^{-2}\left( (-\alpha\lambda_0^2+(n-2)\lambda_0^2)f^{-\alpha}+(n-2)(1-\lambda_0^2) \right)\\
			&\geq f^{-2}\left( -\alpha\lambda_0^2+(n-2) \right)\\
			&>0.
		\end{align*}
		
		It remains to modify $f$ and $h$ near $s=s_\lambda$ to satisfy the required boundary conditions. Note that we have already achieved $f'(s_\lambda)=\lambda$. Now let $N>0$ and $s'\in\R$ so that the function $s\mapsto N\sin\left(\frac{s-s'}{N}\right)$ extends $f$ as a $C^1$-function at $s=s_\lambda$. Note that, since \eqref{INEQ:1}--\eqref{INEQ:3} depend linearly on the second derivative of $f$, these inequalities are satisfied at $s=s_\lambda$ for the function
		\[ s\mapsto \mu f(s)+(1-\mu)N\sin\left(\frac{s-s'}{N}\right). \]
		for all $\mu\in[0,1]$, and therefore also in a small neighbourhood of $s=s_\lambda$. Thus, by using a suitable cut-off function, we can modify $f$ in a small neighbourhood of $s=s_\lambda$ so that $(2)$ is satisfied and \eqref{INEQ:1}--\eqref{INEQ:3} still hold on $[0,s_\lambda]$, see \cite[Theorem 1.2]{BH22}, cf.\ also \cite{Wr02}.
		
		Finally, we modify $h$ near $s=s_\lambda$ to satisfy $(3)$. For that, let $\varepsilon>0$ and let $\psi\colon\R\to\R$ be a smooth function that is constant $1$ on $(-\infty,s_\lambda-\varepsilon]$ and constant $0$ on $[s_\lambda,\infty)$ with $\psi'\leq 0$ and $\psi>0$ on $(-\infty,s_\lambda]$. We replace $h$ on $[s_\lambda-\varepsilon,s_\lambda]$ by the function $\tilde{h}$ defined by
		\begin{align*}
			\tilde{h}(s_\lambda-\varepsilon)&=h(s_\lambda-\varepsilon),\\
			\tilde{h}'&=\psi\cdot h'.
		\end{align*}
		Then we have $\tilde{h}''=h''\psi+h'\psi'\leq h''\psi$, so
		\[ -\frac{\tilde{h}''}{\tilde{h}'}\geq -\frac{h''}{h'}, \]
		which implies that \eqref{INEQ:3} holds. Moreover, since $\tilde{h}''\leq 0$ and $\tilde{h}'\in[0,h'(s_\lambda-\varepsilon)]$, while $\tilde{h}(s_\lambda)\to h(s_\lambda)$ as $\varepsilon\to 0$ and $f''<0$ in a small neighbourhood of $s=s_\lambda$, also \eqref{INEQ:1} and \eqref{INEQ:2} hold on $[s_\lambda-\varepsilon,s_\lambda]$ for $\varepsilon$ sufficiently small.		
	\end{proof}
	For the metric on $[0,s_\lambda]\times S^{n-1}\times S^1$ the functions $f$ and $h$ will be warping functions for the spheres $S^{n-1}$ and $S^1$, respectively. It will be important to scale down the size of the $S^1$-factor, i.e.\ to scale down the function $h$. While inequalities \eqref{INEQ:1}--\eqref{INEQ:3} are invariant under scaling of $h$, the boundary condition $h'(0)=1$ is not. This, however, can be adjusted easily.
	\begin{lemma}\label{L:warping_fcts_smoothing}
		Let $\lambda\in(0,1)$ and let $s_\lambda$, $f$ and $h$ as in Proposition \ref{L:warping_fcts}. Then for any $r\in(0,1)$ and any $\varepsilon>0$ we can modify the functions $f$ and $r\cdot h$ on $[0,\varepsilon]$ so that on $[\varepsilon',s_\lambda]$ for some $\varepsilon'\in(0,\varepsilon)$ they satisfy the conclusions of Proposition \ref{L:warping_fcts} (and we consider the boundary conditions at $s=\varepsilon'$ instead of $s=0$).
	\end{lemma}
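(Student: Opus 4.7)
The starting observation is that inequalities \eqref{INEQ:1}--\eqref{INEQ:3} are each homogeneous of degree zero in $h$ (every term is either independent of $h$ or a ratio in which a derivative of $h$ is divided by $h$ itself), so replacing $h$ by $r h$ leaves them, \eqref{INEQ:4}, and the boundary conditions $(2)$ and $(3)$ at $s = s_\lambda$ intact. Consequently, the only conclusion of Lemma \ref{L:warping_fcts} that fails for the pair $(f, r h)$ is condition $(1)$: we have $(r h)'(0) = r \neq 1$.

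My plan is therefore to introduce a new inner boundary at some $\varepsilon' \in (0, \varepsilon)$ and modify $f$ and $r h$ on $[0, \varepsilon]$ by interpolating between a shifted copy of the original functions (which yields the correct boundary data at $\varepsilon'$ automatically) and the pair $(f, r h)$ (which must be preserved on $[\varepsilon, s_\lambda]$). Concretely, I would fix a smooth non-increasing cut-off $\rho \colon [\varepsilon', \varepsilon] \to [0, 1]$ with $\rho \equiv 1$ near $\varepsilon'$ and $\rho \equiv 0$ near $\varepsilon$, and set
\[
	\tilde f(s) = \rho(s) f(s - \varepsilon') + (1 - \rho(s)) f(s), \qquad \tilde h(s) = \rho(s) h(s - \varepsilon') + (1 - \rho(s)) r h(s)
\]
on $[\varepsilon', \varepsilon]$, extended by $(f, r h)$ on $[\varepsilon, s_\lambda]$. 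Near $\varepsilon'$ these agree with the translated pair $(f(\cdot - \varepsilon'), h(\cdot - \varepsilon'))$, so $\tilde f$ is even at $\varepsilon'$ and $\tilde h$ is odd at $\varepsilon'$ with $\tilde h'(\varepsilon') = h'(0) = 1$; near $\varepsilon$ they agree to all orders with $(f, r h)$, so the modification is smooth across $\varepsilon$ and the conditions at $s_\lambda$ are inherited unchanged. A calibration argument forces $\varepsilon'$ to be chosen of order $(1-r)\varepsilon$, so that the values of $h(s - \varepsilon')$ and $r h(s)$ are comparable in the transition region, making the convex combination well-behaved.

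The key obstacle is to verify that \eqref{INEQ:1}--\eqref{INEQ:3} persist on the interpolation region $(\varepsilon', \varepsilon)$, and especially that \eqref{INEQ:3}, which at $\varepsilon'$ reduces to the limiting condition $-\tilde h'''(\varepsilon') > (n-1)\tilde f''(\varepsilon')/\tilde f(\varepsilon')$, remains strict. Two features help: first, both $h(\cdot - \varepsilon')$ and $r h$ are concave, so the dominant terms in $\tilde h''$ are nonpositive, and the $\rho'$-contributions can be kept arbitrarily small by taking $\rho$ to vary slowly across a wide enough transition window; second, the original inequalities for $(f, h)$ are strict with a uniform margin on a compact neighborhood of $s = 0$, with the limiting form of \eqref{INEQ:3} reducing to $(\alpha \lambda_0^2 / 2)(\alpha + 2 - n) > 0$ thanks to the crucial assumption $\alpha > n - 2$ built into the proof of Lemma \ref{L:warping_fcts}. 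Combining these observations with the smoothing techniques of \cite[Theorem 1.2]{BH22} already used in the proof of Lemma \ref{L:warping_fcts}, one can choose $\rho$ so that $(\tilde f, \tilde h)$ is $C^2$-close enough to the translated pair near $\varepsilon'$ and to $(f, r h)$ near $\varepsilon$ for the strict inequalities to be preserved throughout $[\varepsilon', \varepsilon]$.
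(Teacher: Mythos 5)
Your argument and the paper's diverge at the key step, and yours is incomplete. The paper first exploits $f'(0)=0$ to perturb $f$ so that it is \emph{exactly constant} near $s=0$; once $f''\equiv f'\equiv 0$ there, both \eqref{INEQ:1} and \eqref{INEQ:3} collapse to $-h''/h>0$ and \eqref{INEQ:2} to the automatic $(n-2)/f^2>0$, so essentially any concave positive replacement for $h$ will do. The paper then takes $rh$ on the inner region to be the explicit function $R\sin\left(\frac{s-\varepsilon'}{R}\right)$, which has slope $1$ at $\varepsilon'$, chooses $R,\varepsilon'$ so that this glues $C^1$ to $rh$ at a point $s_0$, and smooths at $s_0$. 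No cutoff interpolation of two copies of $h$ is involved.

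Your convex combination $\tilde h=\rho\,h(\cdot-\varepsilon')+(1-\rho)\,rh$ gives
\[
 \tilde h'' = \rho\,h''(\cdot-\varepsilon') + (1-\rho)\,r h'' + 2\rho'\bigl[h'(\cdot-\varepsilon')-rh'\bigr] + \rho''\bigl[h(\cdot-\varepsilon')-rh\bigr],
\]
and the two cutoff terms are never estimated. Your calibration $\varepsilon'\sim(1-r)\varepsilon$ forces the transition window to have width $\sim r\varepsilon$, so $|\rho'|\gtrsim(r\varepsilon)^{-1}$ and $|\rho''|\gtrsim(r\varepsilon)^{-2}$; there is no room to \textquotedblleft take $\rho$ to vary slowly across a wide enough window,\textquotedblright\ because widening the window undoes the calibration. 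The factor $h'(s-\varepsilon')-rh'(s)$ is of order $1-r$ throughout the window --- not small for the small $r$ that matter in Proposition \ref{P:g_Ric>0} --- and $h(s-\varepsilon')-rh(s)$ changes sign across it, so the $\rho''$-term contributes with the wrong sign on parts of the ramp. Both cutoff contributions to $\tilde h''$ (and the analogous ones to $\tilde f''$) come out of order $(1-r)/(r\varepsilon)$ with competing signs, so whether \eqref{INEQ:1}--\eqref{INEQ:3} survive on the interior of the window depends on constants you never bound; a soft \textquotedblleft $C^2$-closeness near the endpoints\textquotedblright\ argument does not settle it, since $\rho\equiv1$ near $\varepsilon'$ (so the inequalities hold there trivially) and the obstruction lives strictly inside the ramp. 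The clean repair is precisely the paper's device of flattening $f$ first, after which the choice of the inner $h$ becomes essentially free and no competing cutoff terms arise.
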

	\begin{proof}
		Since $f'(0)=0$, we can slightly perturb the function $f$ to be constant in a neighbourhood of $s=0$. For given $s_0\in(0,\varepsilon)$ in this neighbourhood we then replace $h$ on $[\varepsilon',s_0]$ with the function
		\[ s\mapsto \frac{R}{r}\sin\left( \frac{s-\varepsilon'}{R} \right), \]
		where $R$ and $\varepsilon'$ are chosen so that $h$ is $C^1$ at $s=s_0$. It is easily verified that all required properties are satisfied. Finally, we smooth $h$ in a small neighbourhood of $s=s_0$ while keeping inequalities \eqref{INEQ:1}--\eqref{INEQ:3} satisfied, cf.\ \cite[Corollary 3.2]{Re23}.
	\end{proof}
	\begin{proposition}\label{P:g_Ric>0}
		For given $\lambda,r\in(0,1)$ let $f,h_r$ be the functions obtained in Lemma \ref{L:warping_fcts_smoothing}, where $\varepsilon$ is chosen so that $A=dt$ on $[0,\varepsilon]$. Then for all $r$ sufficiently small the metric $g_{f,h_r}$ has positive Ricci curvature.
	\end{proposition}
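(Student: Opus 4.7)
The strategy is to apply Lemma \ref{L:g_Ric} with $\phi=\ln h_r$, using the crucial observation that the inequalities \eqref{INEQ:1}--\eqref{INEQ:3} are invariant under the rescaling $h\mapsto r\cdot h$ (since both $h''/h$ and $h'/h$ are unchanged), while the corrections to the Ricci tensor coming from the curvature form $\mathcal{F}$ of the connection $A$ are of order $h_r=O(r)$ or $h_r^2=O(r^2)$ and therefore become negligible as $r\to 0$. Since $\mathcal{F}$ and $\delta\mathcal{F}$ are fixed once $A$ has been chosen and the base $[0,s_\lambda]\times S^{n-1}$ is compact, these perturbations admit uniform bounds that can be dominated by the strict lower bounds coming from \eqref{INEQ:1}--\eqref{INEQ:3}.

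Working in the orthonormal frame $(T,\partial_s,e_1,\dots,e_{n-1})$ with $T=h_r^{-1}\partial_t$ and $e_j$ horizontal lifts of a local orthonormal frame on $(S^{n-1},f^2 ds_{n-1}^2)$, and noting that $\phi'=h_r'/h_r$ is independent of $r$, a short calculation with the standard warped-product Hessian and Laplacian yields
\begin{align*}
\Delta\phi-\|d\phi\|^2 &= -\frac{h_r''}{h_r}-(n-1)\frac{f'}{f}\frac{h_r'}{h_r},\\
\textup{Hess}_\phi(\partial_s,\partial_s)+(\partial_s\phi)^2 &= \frac{h_r''}{h_r},\\
\textup{Hess}_\phi(e_j,e_j) &= \frac{f'}{f}\frac{h_r'}{h_r},
\end{align*}
with all off-diagonal Hessian and gradient-product contributions vanishing because $\phi$ depends only on $s$. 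Substituting into Lemma \ref{L:g_Ric}, the diagonal Ricci components $\Ric(T,T)$, $\Ric(\partial_s,\partial_s)$ and $\Ric(e_j,e_j)$ become exactly the left-hand sides of \eqref{INEQ:3}, \eqref{INEQ:1} and \eqref{INEQ:2} respectively, each perturbed by an $\mathcal{F}$-quadratic term of order $O(r^2)$.

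For the off-diagonals, the purely horizontal entries $\Ric(\partial_s,e_j)$ and $\Ric(e_i,e_j)$ reduce to $\mathcal{F}$-bilinear corrections of order $O(r^2)$, while $\nabla\phi=(h_r'/h_r)\partial_s$ gives $\Ric(T,X)=-\tfrac{h_r}{2}\delta\mathcal{F}(X)+\tfrac{3}{2}h_r'\mathcal{F}(X,\partial_s)$, which is of order $O(r)$ uniformly since $h_r=rh$ and $h_r'=rh'$ while $\mathcal{F}$ and $\delta\mathcal{F}$ are independent of $r$. On the subinterval where $A=dt$ one has $\mathcal{F}=0$, so the diagonal entries coincide with the strictly positive quantities in \eqref{INEQ:1}--\eqref{INEQ:3} and no issue arises at the collapsing $S^1$-fiber. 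By compactness of $[0,s_\lambda]\times S^{n-1}$ and the strict positivity in \eqref{INEQ:1}--\eqref{INEQ:3}, the diagonal entries admit a fixed positive lower bound independent of $r$, and a standard matrix perturbation argument then yields positive definiteness of the Ricci tensor for all $r$ sufficiently small. The main obstacle is essentially bookkeeping: one must carefully separate the warped-product contribution, which is positive by the inequalities, from the $\mathcal{F}$-induced corrections, whose uniform smallness in $r$ must be verified.
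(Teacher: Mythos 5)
Your proof is correct and follows essentially the same route as the paper: apply Lemma \ref{L:g_Ric} to the doubly warped submersion metric, identify the diagonal Ricci entries with the scale-invariant quantities in \eqref{INEQ:1}--\eqref{INEQ:3}, and observe that all $\mathcal{F}$-dependent corrections (diagonal and off-diagonal) carry explicit factors of $h_r$ or $h_r^2$ and hence vanish as $r\to 0$, while $\mathcal{F}\equiv 0$ on the collar where $A=dt$. The only cosmetic difference is that the paper first records the Levi--Civita connection of $\check{g}_f$ via Koszul and then lists all Ricci components explicitly, whereas you express the same content through the warped-product Hessian and Laplacian of $\phi=\ln h_r$.
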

	\begin{proof}
		Let $X,Y$ be local unit vector fields on $S^{n-1}$. We extend $X$ and $Y$ constantly in $s$-direction to obtain local vector fields on $[0,s_\lambda]\times S^{n-1}$. Using the Koszul formula one obtains the following expressions for the Levi--Civita connection $\nabla^{\check{g}_f}$ of $\check{g}_f$.
		\begin{align*}
			\nabla^{\check{g}_f}_{\partial_s}\partial_s&=0,\\
			\nabla^{\check{g}_f}_{\partial_s}X&=\nabla^{\check{g}_f}_X\partial_s=\frac{f'}{f}X,\\
			\nabla^{\check{g}_f}_X Y&=-f f'\langle X,Y\rangle_{S^{n-1}}\partial_s+\nabla^{S^{n-1}}_X Y.
		\end{align*}
		We then obtain from Lemma \ref{L:g_Ric} the following expressions for the Ricci curvatures of $g_{f,h}$:
		\begin{align*}
			\Ric(T,T)=&-\frac{h_r''}{h_r}-(n-1)\frac{f'h_r'}{fh_r}+\frac{h_r^2}{4}\|\mathcal{F}\|_{\check{g}_f}^2,\\
			\Ric\left(X/f,Y/f\right)=&\left( \frac{-f''}{f}+(n-2)\frac{1-f'^2}{f^2}-\frac{f'h_r'}{fh_r} \right)\langle X,Y\rangle_{S^{n-1}}\\
			&-\frac{h_r^2}{2f^4}\sum_i\mathcal{F}(X,e_i)\mathcal{F}(Y,e_i),\\
			\Ric(\partial_s,\partial_s)=& -\frac{h_r''}{h_r}-(n-1)\frac{f''}{f}-\frac{h_r^2}{2f^2}\sum_i\mathcal{F}(\partial_s,e_i)^2,\\
			\Ric\left(T,X/f\right)=&\frac{h_r}{2}\left( -\delta_{\check{g}_f}\mathcal{F}(X)+3\frac{h'_r}{h_r}\mathcal{F}(X,\partial_s ) \right),\\
			\Ric(T,\partial_s)=&-\frac{h_r}{2}\delta_{\check{g}_f}\mathcal{F}(\partial_s),\\
			\Ric(X/f,\partial_s)=&-\frac{h_r^2}{2f^3}\sum_i\mathcal{F}(X,e_i)\mathcal{F}(\partial_s,e_i).
		\end{align*}
		Here $(e_i)$ is an orthonormal basis with respect to $ds_{n-1}^2$.
		
		Since the functions $f,h_r$ satisfy inequalities \eqref{INEQ:1}--\eqref{INEQ:3}, and since $h_r=r\cdot h$ on $[\varepsilon,s_\lambda]$, the Ricci curvatures $\Ric(T,T)$, $\Ric(X/f,X/f)$ and $\Ric(\partial_s,\partial_s)$ are strictly positive for $r$ sufficiently small and converge to a positive function as $r\to 0$. The absolute values of the mixed Ricci curvatures $\Ric(T,X/f)$, $\Ric(T,\partial_s)$ and $\Ric(X/f,\partial_s)$ can be bounded above by any positive constant by possibly choosing $r$ smaller. It follows that $g_{f,h_r}$ has positive Ricci curvature for $r$ sufficiently small.
	\end{proof}
		
		\subsection{Gluing and Deforming} We have now defined metrics of non-negative Ricci curvature on both parts of the gluing \eqref{EQ:gluing}. To finish the proof of Theorem \ref{T:Twist_sus_Ric_multiple} it remains to glue these metrics together to obtain a metric of non-negative Ricci curvature on $P'$ and to deform it into a metric of positive Ricci curvature.
		
		By construction, the metric $\check{g}$ considered in Subsection \ref{SS:metric1} is of the form $ds^2+\sin^2(s)ds_{n-1}^2$ near each $x_i$ where $s$ denotes the distance from $x_i$. Let $s_0>0$ such that for all $x_i$ the metric has this form on the ball of radius $s_0$. We set $\lambda=\cos(s_0)$ and consider $\ell$ copies of the metric $\check{g}_f$ on $[0,s_\lambda]\times S^{n-1}$ with $f$ as obtained in Lemma \ref{L:warping_fcts}. By using a partition of unity we define for each $i$ a connection form $A_i$ on the $i$-th copy of $[0,s_\lambda]\times S^{n-1}\times S^1$ that smoothly extends the connection form given on $P\setminus\bigcup_i\varphi_i(D^n\times S^1)^\circ$ according to the gluing \eqref{EQ:gluing} and is given by $dt$ in a neighbourhood of $s=0$.
		
		For $r\in(0,1)$ we now consider the metric $g_{f,h_r}$ with connection form $A_i$ and with $h_r$ as obtained in Lemma \ref{L:warping_fcts_smoothing} and $f$ modified accordingly. Since $f(s)=N\sin\left(\frac{s-s'}{N}\right)$ in a neighbourhood of $s=s_\lambda$ and $f'(s_\lambda)=\lambda$, the metric $\check{g}_{f,h_r}$ glues smoothly with the metric $g_\phi$ after rescaling by $\frac{1}{N}$ for suitable values of $r$ and $\phi$. By possibly choosing $r$ and $\phi$ smaller, we obtain a smooth metric of non-negative Ricci curvature on $P'$ by Proposition \ref{P:g_Ric>0}.
		
		Finally, since the metric $g_{f,h_r}$ has strictly positive Ricci curvature by Proposition \ref{P:g_Ric>0}, it follows from the deformation results of Ehrlich \cite{Eh76} that this metric can be deformed into a metric of positive Ricci curvature. This concludes the proof of Theorem \ref{T:Twist_sus_Ric_multiple}.
			
	\section{Applications}\label{S:Appl}
	
	\subsection{Isometric torus actions}
	
	In this section we will prove the following result, which can be seen as an equivariant version of Theorem \ref{T:Twist_sus_Ric} and Corollary \ref{C:Twist_sus_conn_sum}.
	
	\begin{theorem}\label{T:Twist_susp_Ric_equiv}
		Let $(M^n,g)$, $n\geq3$, be a closed Riemannian manifold of positive Ricci curvature that admits an effective isometric action of a torus $T^k$. Suppose that this action has at least $\ell$ fixed points and let $e\in H^2(M;\Z)$. Then the manifold
		\[ \Sigma_e M\#_{\ell-1}(S^2\times S^{n-1}) \]
		admits a Riemannian metric of positive Ricci curvature and an effective isometric $T^{k+1}$-action.
	\end{theorem}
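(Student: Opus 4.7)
The plan is to upgrade the proof of Theorem \ref{T:Twist_sus_Ric_multiple} so that every piece of data is invariant under a $T^{k+1}$-action, where the extra circle factor is the structure group of the principal $S^1$-bundle $\pi\colon P\to M$. First I would lift the isometric $T^k$-action from $M$ to $P$. Because the action has a fixed point, a standard lifting argument for torus actions on principal $S^1$-bundles shows that $\pi$ admits a $T^k$-action covering the one on $M$ and commuting with the principal circle action. The resulting $T^{k+1}$-action on $P$ is effective, since $T^k\curvearrowright M$ is effective and the principal $S^1$ acts freely. This lifted action should then be shown to persist through the surgery construction of $\Sigma_eM\#_{\ell-1}(S^2\times S^{n-1})$ and to be isometric for the metric built in Section \ref{S:Ric>0}.

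Next I would redo Subsection \ref{SS:metric1} equivariantly. Pick $\ell$ distinct fixed points $x_1,\ldots,x_\ell$ of $T^k$ and apply the equivariant version of Wraith's local deformation (Lemma \ref{L:equiv_deform}) to deform $g$ through $T^k$-invariant metrics of positive Ricci curvature into a $T^k$-invariant metric $\check g$ which is round of constant sectional curvature $1$ on $T^k$-invariant geodesic balls about each $x_i$. On each such ball the $T^k$-action agrees with its orthogonal isotropy representation, so the embeddings $\check\varphi_i$ and their equivariant covers $\varphi_i\colon D^n\times S^1\hookrightarrow P$ can be chosen so that $T^k$ acts linearly on $D^n$ and trivially on the $S^1$-factor while the principal $S^1$ rotates the latter. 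Because $\check g$ is $T^k$-invariant, uniqueness of the harmonic representative $\omega$ of $e$ forces $\omega$ to be $T^k$-invariant, so averaging the connection form of Proposition \ref{P:CURV_FORM} over $T^{k+1}$ yields a $T^{k+1}$-invariant connection form $A$ with $\mathcal{F}=-2\pi\pi^*\omega$; then $g_\phi=\pi^*\check g+e^{2\phi}A^2$ is automatically $T^{k+1}$-invariant.

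For each glued piece $[0,s_\lambda]\times S^{n-1}\times S^1$ the connection form $A_i$ can be chosen $T^{k+1}$-invariant, since it equals $dt$ near $s=0$ and must match $A$ near $s=s_\lambda$, both of which are invariant. The doubly warped submersion metric $g_{f,h_r}$ of Proposition \ref{P:g_Ric>0} depends only on the radial variable and on the standard round metrics on $S^{n-1}$ and $S^1$, and hence is invariant under the linear $T^k$-action on $S^{n-1}$ coming from the isotropy representation at $x_i$ together with the rotation of $S^1$; collapsing the $S^1$-fibre at $s=0$ produces a $T^{k+1}$-invariant metric on $S^{n-1}\times D^2$ that matches $g_\phi$ equivariantly across the common boundary. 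This yields a $T^{k+1}$-invariant metric of non-negative Ricci curvature on $P'\cong\Sigma_eM\#_{\ell-1}(S^2\times S^{n-1})$, and a final application of Ehrlich's deformation, averaged over $T^{k+1}$, upgrades it to a $T^{k+1}$-invariant metric of strictly positive Ricci curvature.

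The main obstacle is the very first step: ensuring that the $T^k$-action genuinely lifts to $P$ commuting with the principal $S^1$, so that the combined action is a well-defined effective isometric $T^{k+1}$-action. Once this lift is in place, the remaining work is routine, because every other ingredient (the harmonic representative, the averaged connection form, the purely radial warping data, the Ehrlich deformation) is sufficiently canonical that equivariance survives essentially for free.
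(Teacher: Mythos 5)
Your proposal is correct and follows essentially the same route as the paper's proof: equivariant local flattening near the fixed points via Lemma~\ref{L:equiv_deform}, lifting the torus action to $P$ commuting with the principal circle, a $T^{k+1}$-invariant connection form with harmonic curvature, an equivariant doubly-warped gluing piece, and an isometry-preserving Ehrlich deformation. The one notable difference is your justification for lifting the $T^k$-action to $P$: you appeal to the existence of a fixed point, while the paper instead uses that $\pi_1(M)$ is finite (automatic from positive Ricci curvature and Bonnet--Myers) and cites \cite[Proposition 6.2]{Su63}; both are adequate here.
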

	Note that Theorem \ref{T:Twist_susp_Ric_equiv} in particular applies to the trivial action of the trivial group, resulting in a Riemannian metric of positive Ricci curvature which is invariant under a circle action.
	
	To prove Theorem \ref{T:Twist_susp_Ric_equiv}, we need to deform the metric equivariantly around fixed points into a metric of constant sectional curvature $1$.
	\begin{lemma}\label{L:equiv_deform}
		Let $(M^n,g)$ be a Riemannian manifold of positive Ricci curvature and suppose that $G$ is a Lie group that acts isometrically on $(M,g)$. Let $p\in M$ be a fixed point of this action. Then there exists a metric $g'$ of positive Ricci curvature on $M$ with the following properties:
		\begin{enumerate}
			\item There exists $\varepsilon>0$ so that on $D_\varepsilon(p)\subseteq M$, which we identify with $D_\varepsilon(0)\subseteq T_p M$ via $\exp$, the action is linear and the metric $g'$ coincides with the induced metric of a geodesic ball of radius $\varepsilon$ in the sphere of radius 1,
			\item The metric $g'$ coincides with $g$ outside an arbitrarily small neighbourhood of $p$,
			\item The metric $g'$ is invariant under the action of $G$.
		\end{enumerate}
	\end{lemma}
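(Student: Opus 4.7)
The plan is to reduce to the non-equivariant deformation of \cite[Corollary 4.4]{RW23} (or \cite{Wr02}) and observe that, since $p$ is a fixed point of $G$, the construction can be carried out $G$-equivariantly essentially for free. The starting observation is that the differential $d\rho_p\colon G\to O(T_pM,g_p)$ realizes the $G$-action, via the exponential map $\exp_p$, as a linear orthogonal action on a small ball around $p$. Consequently, both the geodesic distance $r=d(p,\cdot)$ and the pulled-back round unit sphere metric on $T_pM$ are $G$-invariant.

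In geodesic normal coordinates at $p$ I would write $g=dr^2+g_r$, where $g_r$ is a smooth family of metrics on the unit sphere $S^{n-1}\subseteq T_pM$, each of which is invariant under $d\rho_p(G)\subseteq O(n)$ since $G$ acts by isometries of $g$. I would then apply the deformation machinery of \cite[Corollary 4.4]{RW23} to interpolate $g_r$ between the round model $\sin^2(r)\,ds_{n-1}^2$ on $r\in[0,\varepsilon]$ and the original family $g_r$ for $r$ close to some $\delta$, keeping the resulting metric $dr^2+\tilde g_r$ of positive Ricci curvature. The construction in loc.\ cit.\ is purely radial: it uses cut-off functions of $r$ only, applied to the intrinsic family $g_r$ and the round model. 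Hence the deformation commutes with any linear isometry of $T_pM$ that preserves each $g_r$ and the round metric simultaneously; every element of $d\rho_p(G)$ is such an isometry, so the deformed family $\tilde g_r$ is again $G$-invariant. Gluing with the unmodified metric $g$ on $M\setminus B_\delta(p)$ (which is $G$-invariant by assumption) yields the desired metric $g'$.

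Property (1) then holds because on $B_\varepsilon(p)$ the metric coincides with the model $dr^2+\sin^2(r)\,ds_{n-1}^2$ and the $G$-action is identified via $\exp_p$ with the linear action of $d\rho_p(G)$; property (2) holds because the deformation is supported in $B_\delta(p)$ and $\delta$ can be chosen arbitrarily small; and property (3) is precisely the $G$-equivariance just established. Positive Ricci curvature is preserved throughout by \cite[Corollary 4.4]{RW23}.

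The main obstacle is verifying that the cited deformation genuinely commutes with the orthogonal $G$-action on $T_pM$; this amounts to inspecting the argument and confirming that only the $G$-invariant data (radial distance, the family $g_r$, and the round model) enter the construction. Should an alternative, more self-contained equivariant argument be preferred, one can construct the deformation directly as a radial warping of the family $g_r$, using that both $g_r$ and $\sin^2(r)\,ds_{n-1}^2$ are $d\rho_p(G)$-invariant, and verify the Ricci curvature estimates of Wraith's approach by hand; this bypasses any appeal to the equivariance of the cited deformation.
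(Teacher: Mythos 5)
Your proposal is correct and follows essentially the same route as the paper: linearize the action via $\exp_p$, observe that the pullback $g_\varepsilon$ of the round metric on $T_pM$ is $G$-invariant because the differential action preserves $g_p$, and interpolate between $g_\varepsilon$ and $g$ by a cutoff in the radial distance to $p$. The one place where your argument is less tight than the paper's is the concern you flag at the end about whether the cited deformation ``genuinely commutes'' with the $G$-action. The paper sidesteps this entirely: the relevant result (\cite[Theorem 1.10]{Wr02}, in the paper's citation) produces a cutoff function $\psi$ and the deformed metric in the explicit form $g'=\psi(d_p)\,g_\varepsilon+(1-\psi(d_p))\,g$. Since $d_p$, $g_\varepsilon$ and $g$ are each $G$-invariant, $g'$ is manifestly $G$-invariant — no inspection of the proof of the cited theorem, nor any re-derivation of Ricci estimates ``by hand,'' is needed. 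Your observation that both the radial distance and the round model are invariant is exactly the right input; the point you can add is that the deformation theorem already delivers the metric in a form where this input suffices on its face.
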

	\begin{proof}
		We consider the induced linear action of $G$ on the tangent space $T_pM$ via the differential. Since the action of $G$ on $M$ is isometric, the action of $G$ on $T_p M$ preserves the inner product $g_p$. Hence, if we equip $D_{\frac{\pi}{2}}(0)\subseteq T_p M$, i.e.\ the ball of radius $\frac{\pi}{2}$ with respect to $g_p$, with the Riemannian metric $ds_n^2$, i.e.\ round metric of radius $1$, the metric $ds_n^2$ is invariant under the action of $G$.
		
		Now let $\varepsilon>0$ so that the exponential map $\exp\colon T_pM\to M$ is a diffeomorphism on $D_\varepsilon(0)$ and consider the metric
		\[ g_\varepsilon=\left(\exp|_{D_\varepsilon(0)}^{-1}\right)^*ds_n^2 \]
		on $D_\varepsilon(p)\subseteq M$. Since the exponential map is equivariant with respect to the $G$-action, the metric $g_\varepsilon$ is $G$-invariant. Furthermore, since $g_\varepsilon$ and $g$ induce the same normal coordinate system around $p$, their $1$-jets coincide at $p$. Hence, by \cite[Theorem 1.10]{Wr02}, there exists a cutoff function $\psi\colon [0,\infty)\to[0,1]$ with $\psi|_{[0,\varepsilon']}\equiv1$ for some $\varepsilon'\in(0,\varepsilon)$ and $\psi|_{[\varepsilon,\infty)}\equiv 0$, such that the metric
		\[ g'=\psi(d_p)g_\varepsilon+(1-\psi(d_p))g \]
		has positive Ricci curvature, where $d_p\colon M\to \R$ denotes the distance function to $p$. Since $d_p$ is invariant under the $G$-action, the metric $g'$ is invariant under the $G$-action as well and, by definition, coincides with $g_\varepsilon$ in a neighbourhood of $p$ and with $g$ outside $D_\varepsilon(p)$.
	\end{proof}
	
	\begin{proof}[Proof of Theorem \ref{T:Twist_susp_Ric_equiv}]
		We first apply Lemma \ref{L:equiv_deform} to $\ell$ distinct fixed points, so we obtain isometric embeddings $\check{\varphi}_1,\dots,\check{\varphi}_\ell\colon D^n\hookrightarrow M$, where on $D^n$ we consider the induced metric of a geodesic ball of some radius $\varepsilon>0$ in the round sphere of radius $1$, and the action on each of these discs is linear. As in the proof of Theorem \ref{T:Twist_sus_Ric_multiple}, we denote by $\varphi_1,\dots,\varphi_\ell\colon D^n\times S^1\hookrightarrow P$ local trivializations covering the embeddings $\check{\varphi}_1,\dots,\check{\varphi}_\ell$.
		
		The action on each local trivialization $D^n\times S^1$ is now of the form
		\[h(x,y)\mapsto (h\cdot x,\phi(h)y),\]
		where $h\in T^k$, $(x,y)\in D^n\times S^1$, $h\cdot x$ denotes the action of $T^k$ on $M$, and $\phi\colon D^n\to S^1$ is a smooth map. We can modify $\phi$ to be constant $1$ in a neighbourhood of $0\in D^n$. By restricting the local trivializations $\varphi_i$ to sufficiently small discs, we can then assume that the resulting action is trivial on the $S^1$-factor in each local trivialization.
		
		Now let $P\to M$ be the principal $S^1$-bundle with Euler class $e$. Since $M$ is closed and admits a Riemannian metric of positive Ricci curvature, its fundamental group is finite. Hence, by \cite[Proposition 6.2]{Su63}, see also \cite{HY76}, we can lift the $T^k$-action on $M$ to $P$ so that it commutes with the free $S^1$-action on $P$, hence we obtain a $T^{k+1}$-action on $P$.
		
		Next, as shown in \cite[p.\ 459]{GPT98}, there exists a connection form on $P$ with harmonic curvature form representing $-2\pi e\in H^2_{dR}(M)$ as in Proposition \ref{P:CURV_FORM} which is $T^k$-invariant. Hence, the metric $g_\phi$ for any $\phi>0$ is $T^{k+1}$-invariant.
		
		We now proceed along the same lines as in the proof of Theorem \ref{T:Twist_sus_Ric_multiple}. Since the action of $T^{k+1}$ on each boundary component of $P\setminus\bigcup_i\varphi_i(D^m\times S^1)^\circ$, which we identify with $S^{n-1}\times S^1$ via $\varphi_i$, is linear on each factor and constant in orthogonal direction to the boundary, the doubly warped submersion metric $\check{g}_{f,h}$ constructed in the proof of Theorem \ref{T:Twist_sus_Ric_multiple} is invariant under the action as long as we choose the connection form $A$ on $[0,s_\lambda]\times S^{n-1}\times S^1$ to be invariant.
		
		Finally, the deformation at the end of the proof of Theorem \ref{T:Twist_sus_Ric_multiple}, which ensures that the resulting metric has strictly positive Ricci curvature, can be chosen to preserve the isometry group as explained in \cite[p.\ 20]{Eh76}.
	\end{proof} 
	
	\begin{proof}[Proof of Theorem \ref{T:cohomo_2}]
		We start by considering the action of $S^1$ on $S^3\subset \C^2$ given by $\lambda(z_1,z_2)=(\lambda z_1,z_2)$. Since the action is linear, it preserves the round metric on $S^3$. Its fixed point set is given by $\{0\}\times S^1$, in particular, we have an infinite number of fixed points. By Theorem \ref{T:Twist_susp_Ric_equiv}, it follows that the manifold
		\[ \Sigma_0 S^3\#_{\ell}(S^2\times S^2)\cong \#_\ell(S^2\times S^2) \]
		admits a Riemannian metric of positive Ricci curvature that is invariant under an effective $T^2$-action for any $\ell\in\N_0$. This proves Theorem \ref{T:cohomo_2} for $n=4$ and $k$ even.
		
		For the remaining cases first note that the $T^2$-action on $\#_\ell (S^2\times S^2)$ has precisely $2\ell+2$ fixed points. This follows for example from the fact that the fixed point set has the same Euler characteristic as the manifold itself, see \cite{Ko58}. It can also seen directly from the construction, where the fixed point set on each attached $S^2\times D^2$ is given by $\{x_0,x_1\}\times\{0\}$, where $x_0,x_1$ are the two fixed points of the rotational action of $S^1$ on $S^2$.
		
		It then follows from Theorem \ref{T:Twist_susp_Ric_equiv}, that the manifold
		\[ M_{e,\ell'}=(\Sigma_e\#_\ell (S^2\times S^2 ))\#_{\ell'}(S^2\times S^3) \]
		for any $e\in H^2(\#_\ell (S^2\times S^2);\Z)$ and $\ell'\in\N_0$ admits a Riemannian metric of positive Ricci curvature that is invariant under a $T^3$-action. Further, by Lemma \ref{L:Twist_susp_topology}, the manifold $M_{e,\ell'}$ is spin if and only if $e$ has even divisibility and $M_{e,\ell'}$ has second Betti number given by $2\ell+\ell'$. In particular, we can realize any number $k\geq 0$ as the second Betti number of a spin manifold and any number $k\geq 2$ as the second Betti number of a non-spin manifold (since for $\ell=0$ the class $e\in H^2(S^4;\Z)$ vanishes). Finally, for $k=1$, by \cite{Pa04}, see also \cite{GS11}, we can construct a non-spin manifold with a $T^3$-action as a biquotient, and by \cite{ST04} it also admits an invariant Riemannian metric of positive Ricci curvature. 
		
		Now suppose $n>5$ and $k\in\N_0$. Let $B^5$ be a manifold constructed above with $b_2(B)=k+n-5$ such that $B$ is spin if and only if $k=0$. Consider a principal $T^{n-5}$-bundle $P\xrightarrow{\pi}B$ with simply-connected total space. This can be achieved by choosing an Euler class $e(\pi)\in H^2(B;\Z)^{n-5}$ that can be extended to a basis of $H^2(B;\Z)\cong\Z^{k+n-5}$, and in this case we have
		\[ b_2(P)=b_2(B)-(n-5)=k, \]
		see e.g.\ \cite[Lemma 2.3]{GR23}. Further, by \cite[Corollary 2.6]{GR23}, the manifold $P$ is spin if and only if $w_2(B)\in H^2(B;\Z/2)$ is contained in the subspace generated by $e(\pi)\mod 2$. Thus, by choosing a suitable Euler class, we can realize $P$ as either a spin or non-spin manifold whenever $k>0$.
		
		By \cite[Theorem 0.1]{GPT98}, the manifold $P$ admits a Riemannian metric of positive Ricci curvature that is invariant under the free $T^{n-5}$-action. Further, as explained in \cite[p.\ 459]{GPT98}, the $T^3$-action on $B$ constructed above can be lifted to $P$ such that it commutes with the $T^{n-5}$-action and the Riemannian metric can be chosen to be invariant under this action. Thus, we obtain an isometric $T^{n-2}$-action on the $n$-dimensional manifold $P$.
		
		Finally, assume that $M^n$ is a closed, 2-connected manifold with an effective action of $T^{n-2}$. By Proposition \ref{P:cohomo_b2}, after applying a torus automorphism, $M$ is equivariantly diffeomorphic to the manifold constructed above with $k=0$, thus admitting an invariant Riemannian metric of positive Ricci curvature.
	\end{proof}

	\begin{remark}\label{R:Ric>0_conn_sum}
		The manifolds constructed in the proof of Theorem \ref{T:cohomo_2} are total spaces of principal torus bundles $P\to M_{e,\ell'}$ over the manifolds $M_{e,\ell'}$. It follows from \cite[Theorems B and C]{GR23} that each total space $P$ is diffeomorphic to a connected sum of products of spheres and possibly a non-trivial linear sphere bundle over $S^2$ (in case $P$ is non-spin). The existence of Riemannian metrics of positive Ricci curvature (without any assumption on symmetries) on manifolds of this form has already been established in \cite{Bu20} in combination with \cite[Theorem C]{Re23}.
	\end{remark}
		
	\subsection{Connected sums and principal $S^1$-bundles}
	
	In this subsection we prove Corollary \ref{C:S1-bundle}, which is an immediate consequence of Corollary \ref{C:Twist_sus_conn_sum} together with the results of \cite{GR23}.
	
	\begin{proof}[Proof of Corollary \ref{C:S1-bundle}]
		For (1) let $M^{4m}$ be a closed manifold that admits a Riemannian metric of positive Ricci curvature, let $e\in H^2(M;\Z)$ and let $P\to M\#_\ell(\pm\C P^{2m})$ be the principal $S^1$-bundle with Euler class $e+\sum_{i=1}^\ell x_i$, where $x_i\in H^2(\C P^{2m};\Z)$ is a generator of the cohomology ring of the $i$-th $\pm\C P^{2m}$-summand.
		
		By \cite[Theorem A]{GR23}, we can decompose $P$ as follows:
		\[ P\cong\Sigma_e M\#\Sigma_{x_1}(\pm\C P^{2m})\#\dots\#\Sigma_{x_{\ell-1}}(\pm\C P^{2m})\# P_0,  \]
		where $P_0\to \C P^{2m}$ is the principal $S^1$-bundle with Euler class $x_\ell$.
		
		By \cite[Theorem B]{GR23}, each twisted suspension $\Sigma_{x_i}(\pm\C P^{2m})$ is diffeomorphic to $S^2\times S^{4m-1}$ and, since $x_\ell\in H^2(\C P^{2m})$ is a generator, we have $P_0\cong S^{4m+1}$. Hence $P$ is diffeomorphic to
		\[ P\cong \Sigma_e M\#_{\ell-1}(S^2\times S^{4m-1}) \]
		and the claim now follows from Corollary \ref{C:Twist_sus_conn_sum}.
		
		For (2) let $M^n$ be a closed, simply-connected manifold that admits a Riemannian metric of positive Ricci curvature and let $P\to M$ be a principal $S^1$-bundle with primitive Euler class $e$. Then, by \cite[Theorem B]{GR23}, we have
		\[ \Sigma_e M\cong P\# N. \]
		Hence, by Corollary \ref{C:Twist_sus_conn_sum}, the manifold
		\[ (P\# N)\#_{\ell-1}(S^2\times S^{n-1}) \]
		admits a Riemannian metric of positive Ricci curvature. The claim now follows from the fact that the manifold $N\#_{\ell-1}(S^2\times S^{n-1})$ is diffeomorphic to $\#_\ell N$, see e.g.\ \cite[Corollary 4.2]{GR23}.
	\end{proof}
	
	\begin{remark}
		Note that, in order to apply the results of \cite[Theorem B]{GR23}, we do not need that $M$ is simply-connected in (2) of Corollary \ref{C:S1-bundle}. Indeed, we only need to require that the fibre inclusion $S^1\hookrightarrow P$ is null-homotopic, or, equivalently, that the pull-back of the Euler class to the universal cover of $M$ is primitive. In this case we define $N$ as $S^2\times S^{n-1}$, whenever the universal cover of $M$ is non-spin and as the unique non-trivial linear $S^{n-1}$-bundle over $S^2$ otherwise.
	\end{remark}
	
	\subsection{Simply-connected 6-manifolds}
	
	In this subsection we prove Theorem \ref{T:Hom_Sphere}. The manifolds $\Sigma$ and $M_k$ in Theorem \ref{T:Hom_Sphere} will be constructed as twisted suspensions of corresponding 5-manifolds. Recall from Smale's classification of closed, simply-connected spin 5--manifolds \cite{Sm62} that for each $k\in\N$ there exists a unique closed, simply-connected spin 5-manifold $N_k$ with $H_2(N_k)\cong\Z/k\oplus\Z/k$.
	
	\begin{theorem}[{\cite[Corollary 10.2.20 and Table B.4.2]{BG08}, see also \cite{Ko05,Ko09}}]\label{T:pos_sasaki}
		The following manifolds admit a positive Sasakian structure and therefore a Riemannian metric of positive Ricci curvature:
		\begin{enumerate}
			\item The manifold $N_k$ whenever $k$ is not a multiple of $30$.
			\item The manifold $N_k\#_{7}(S^2\times S^3)$ for all $k\in\N$.
		\end{enumerate}
	\end{theorem}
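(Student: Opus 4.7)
My plan is to realize both families as links of weighted homogeneous polynomials with isolated hypersurface singularities in $\C^4$. Given weights $(w_0,w_1,w_2,w_3)\in\N^4$ and a weighted homogeneous polynomial $f\in\C[z_0,z_1,z_2,z_3]$ of degree $d$ whose only critical point is the origin, the link $L_f=f^{-1}(0)\cap S^7$ is a closed simply-connected $5$-manifold that inherits a natural quasi-regular Sasakian structure from the weighted Sasakian structure on $S^7$. Its orbit space under the induced $S^1$-action is a log del Pezzo orbifold $\mathcal{X}_f$, and the Sasakian structure on $L_f$ is positive precisely when $\mathcal{X}_f$ is log Fano and carries an orbifold Kähler-Einstein metric of positive scalar curvature. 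My proof therefore splits into two largely independent tasks: (i) producing weights and polynomials whose link realises the prescribed Smale type, and (ii) checking that the resulting log del Pezzo orbifolds admit orbifold Kähler-Einstein metrics.

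For task (i), I would begin with Brieskorn-Pham links $L(a_0,a_1,a_2,a_3)$ associated to $f=z_0^{a_0}+z_1^{a_1}+z_2^{a_2}+z_3^{a_3}$. The diffeomorphism type of $L(a_0,a_1,a_2,a_3)$, including the torsion of $H_2$ with its linking form, is computable from the exponents $a_i$ via Brieskorn's formulas, and comparison with Smale's classification identifies the link as some $N_m$. For part (1), a number-theoretic analysis using that the Fano condition $\sum 1/a_i>1$ constrains the $a_i$ to small values shows that the $m$ reachable as such a positive link are precisely those not divisible by $30$; this is the source of the hypothesis on $k$. For part (2), one gains more flexibility either by allowing $f$ to be a non-Brieskorn weighted polynomial, or, in the spirit of Kollár, by taking Seifert $S^1$-bundles over suitably chosen log del Pezzo surfaces; this introduces additional $S^2\times S^3$-summands into the total space, and seven of them suffice to absorb the remaining obstructions for every $k\in\N$.

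The hard part is task (ii): proving existence of orbifold Kähler-Einstein metrics on these log del Pezzo orbifolds. The standard tool is the Demailly-Kollár criterion, which yields existence whenever the global log canonical threshold (equivalently the $\alpha$-invariant) of the orbifold exceeds $\frac{2}{3}$. Verifying this bound requires a delicate case-by-case analysis of the log resolution of each orbifold that depends intricately on the chosen weights. This algebro-geometric machinery lies well outside the Riemannian-geometric methods developed in the present article, and is exactly what occupies Kollár's papers \cite{Ko05,Ko09} together with the tables in Section 10.2 and Appendix B.4 of \cite{BG08}. In practice I would therefore simply quote the statement as a black box from those sources rather than reprove it, since it enters only as an input to the twisted-suspension construction carried out in the rest of this section.
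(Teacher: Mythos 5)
The paper does not prove Theorem \ref{T:pos_sasaki}; it is quoted verbatim from Boyer--Galicki \cite{BG08} and Kollár \cite{Ko05,Ko09}, and you correctly conclude that the right move here is to cite it as a black box, which is exactly what the paper does. Your sketch of the underlying machinery (Brieskorn--Pham links, the Fano condition, orbifold Kähler--Einstein existence via the $\alpha$-invariant and the Demailly--Kollár criterion) is an accurate outline of what happens inside those references, but since neither you nor the paper re-derive it, the approaches coincide.
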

	
	\begin{proof}[Proof of Theorem \ref{T:Hom_Sphere}]
		For (1) we define $\Sigma=\Sigma_0 N_k$ whenever $k$ is not a multiple of 30. By Theorem \ref{T:pos_sasaki} and Corollary \ref{C:Twist_sus_conn_sum} the manifold $\Sigma\#_\ell (S^2\times S^4)$ then admits a Riemannian metric of positive Ricci curvature for all $\ell\in\N_0$. 
		
		By Lemma \ref{L:Twist_sus_conn_sum}, the manifold $\Sigma$ is simply-connected. Further, it follows from Poincaré duality and the universal coefficient theorem that $H_3(N_k;\Z)$ and $H_4(N_k;\Z)$ are trivial. Hence, by \eqref{EQ:Twist_sus_hom}, the homology groups of $\Sigma$ are given by $H_2(\Sigma;\Z)\cong H_3(\Sigma;\Z)\cong H_2(N_k;\Z)\cong \Z/k\oplus \Z/k$ and $H_4(\Sigma;\Z)\cong H_3(N_k;\Z)\oplus H_4(N_k;\Z)=0$.
		
		For (2) we define $M_k=\Sigma_e(N_k\#_7(S^2\times S^3))$ for some
		\[e\in H^2(N_k\#_7(S^2\times S^3);\Z)\cong\bigoplus_7 H^2(S^2\times S^3;\Z).\]
		As before we obtain from Theorem \ref{T:pos_sasaki} and Corollary \ref{C:Twist_sus_conn_sum} that for any $\ell\in\N_0$ the manifold $M_k\#_{\ell}(S^2\times S^4)$ admits a Riemannian metric of positive Ricci curvature and by Lemma \ref{L:Twist_susp_topology} we have that $M_k$ is simply-connected with 
		\[H_2(M_k;\Z)\oplus H_2(N_k\#_7(S^2\times S^3);\Z)\cong \Bigslant{\Z}{k\Z}\oplus\Bigslant{\Z}{k\Z}\oplus \Z^7. \]
		Further, by Lemma \ref{L:Twist_susp_topology}, the manifold $M_k$ is spin if and only if $e$ has even divisibility, so that we can construct a spin manifold by choosing $e=0$ and a non-spin manifold by choosing $e$ to be primitive.
	\end{proof}
	
	\begin{remark}
		We can give further information on the topology of $\Sigma=\Sigma_0 N_k$ and $M_k=\Sigma_e (N_k\#_7(S^2\times S^3))$. First note that it follows from Lemma \ref{L:Twist_sus_conn_sum} that
		\[ M_k=\Sigma_e(N_k\#_7(S^2\times S^3))\cong\Sigma_0 N_k\#\Sigma_{e_1}(S^2\times S^3)\#\dots\#\Sigma_{e_7}(S^2\times S^3), \]
		where $e_i\in H^2(S^2\times S^3;\Z)$ denotes the restriction of $e$ to the $i$-th summand. Further, by \cite[Theorem B]{GR23}, each $\Sigma_{e_i}(S^2\times S^3)$ is diffeomorphic to either $(S^2\times S^4)\# (S^3\times S^3)$ or $(S^2\ttimes S^4)\# (S^3\times S^3)$, where $S^2\ttimes S^4$ denotes the total space of the unique non-trivial $S^4$-bundle over $S^2$, depending on whether $e_i$ has even or odd divisibility.
		
		It remains to investigate the topological properties of $\Sigma_0 N_k$. Since $H^4(N_k;\Z)$ is trivial and $N_k$ is spin, the characteristic classes $w_2(N_k)$ and $p_1(N_k)$ are trivial. Hence, by Lemma \ref{L:Twist_susp_topology}, also the classes $w_2(\Sigma_0 N_k)$ and $p_1(\Sigma_0 N_k)$ vanish.
		
		Further, for any coefficient ring, the product of any two classes in $H^2(\Sigma_0 N_k)$ vanishes. This can be seen by considering the space $M'=(S^2\ttimes S^3)\# N_k$, where $S^2\ttimes S^3$ denotes the total space of the unique non-trivial linear $S^3$-bundle over $S^2$. If $x^*\in H^2(S^2\ttimes S^3;\Z)\cong \Z$ denotes a generator, then the principal $S^1$-bundle with Euler class $x^*$ over $M'$ is diffeomorphic to $(S^3\times S^3)\#\Sigma_0 N_k$ by \cite[Theorem A]{GR23}. By the Gysin sequence, the bundle projection induces a surjective map
		\[ H^2(M')\to H^2((S^3\times S^3)\#\Sigma_0 N_k)\cong H^2(\Sigma_0 N_k). \]
		Since $H^4(M')$ is trivial, any product of elements of $H^2(M')$ vanishes, and therefore the same holds for $H^2(\Sigma_0 N_k)$.
		
		Finally, we note that closed, simply-connected spin $6$-manifolds were classified by Zhubr \cite{Zu75}. A consequence of this classification is that the diffeomorphism type of such a manifold $M$ is uniquely determined by the second homology group $H_2(M;\Z)$, the cohomology ring $H^*(M;\Z/n)$ for all $n\in\N\cup\{\infty\}$ (which can be identified with a class $\mu(M)\in H_6(K(H_2(M),2);\Z)$), the third Betti number $b_3(M)$ and a certain class $p(M)\in H^4(M;\Z)\cong H_2(M;\Z)$ with $4p(M)=p_1(M)$, up to automorphisms of $H_2(M;\Z)$. In our case, we have from the above that $\mu(\Sigma_0 N_k)$, $b_3(\Sigma_0 N_k)$ and $p_1(\Sigma_0 N_k)$ all vanish. We also have that $p(\Sigma_0 N_k)$ vanishes, which can be seen as follows:
		
		Let $W_k$ be the manifold obtained by plumbing as follows:
		\begin{center}
			\begin{tikzpicture}
				\begin{scope}[every node/.style={circle,draw,minimum height=2.5em}]
					\node[label=center:$\underline{D}^2_{N_k}$] (V1) at (1,0) {\phantom{0}};
					\node[label=center:$\underline{D}^5_{S^2}$] (V2) at (3,0) {\phantom{0}};
				\end{scope}
				\path[-](V1) edge["{$\scriptstyle +$}"] (V2);
			\end{tikzpicture}
		\end{center}
		As seen in Section \ref{S:PREL}, we have $\partial W_k= \Sigma_0 N_k$. Moreover, since $W_k$ is homotopy equivalent to $N_k\vee S^2$, we have an isomorphism
		\[H_2(W_k;\Z)\cong H_2(N_k;\Z)\oplus H_2(S^2;\Z).\]
		Since the inclusion $N_k\setminus D^5\hookrightarrow N_k$ induces an isomorphism on $H_2$, the inclusion $\Sigma_0 N_k\hookrightarrow W_k$ induces the map $x\mapsto (x,0)$ on $H_2$ according to this splitting. Hence, the class $[(\Sigma_0 N_k,\textrm{id}_{\Z/k\oplus\Z/k})]$ is trivial in the bordism group $\Omega_6^{\textrm{Spin}}(K(\Z/k\oplus \Z/k,2))$. As shown in \cite[3.13]{Zu75}, there is a homomorphism
		\[P\colon \Omega_6^{\textrm{Spin}}(K(\Z/k\oplus \Z/k,2))\to \Z/k\oplus \Z/k\]
		with $p(M)=P([(M,\textrm{id}_{\Z/k\oplus \Z/k})])$ for all $M$ with $H_2(M;\Z)\cong \Z/k\oplus \Z/k$, and therefore we have $p(\Sigma_0 N_k)=0$. Hence, the diffeomorphism type of $\Sigma_0 N_k$ is uniquely determined by the invariants $H_2(\Sigma_0 N_k;\Z)\cong \Z/k\oplus \Z/k$, $\mu(\Sigma_0 N_k)=0$, $b_3(\Sigma_0 N_k)=0$ and $p(\Sigma_0 N_k)=0$.
	\end{remark}
	
	\begin{remark}
		The manifolds in Theorem \ref{T:pos_sasaki} are the only known examples of closed, simply-connected rational homology $5$-spheres with a Riemannian metric of positive Ricci curvature, with the exception of the manifolds $\#_\ell N_2$ for all $\ell\in\N$, a finite number of sporadic examples, see \cite[Corollary 10.2.20]{BG08}, and the Wu manifold $W^5$. The latter is the homogeneous space $\mathrm{SU(3)}/\mathrm{SO(3)}$ and it follows from classical results of Nash \cite{Na79} that this space admits a Riemannian metric of positive Ricci curvature. The Wu manifold $W^5$ is non-spin with $H_2(W;\Z)\cong \Z/2$. We can apply a similar construction as in the proof of Theorem \ref{T:Hom_Sphere} to these manifolds, which provide further examples of rational homology $6$-spheres with a Riemannian metric of positive Ricci curvature.
	\end{remark}
	
	\begin{remark}
		We can iterate the construction in the proof of Theorem \ref{T:Hom_Sphere} by taking iterated twisted suspensions of $N_k$ in Theorem \ref{T:pos_sasaki}. In this way we obtain infinite families of rational homology spheres with a Riemannian metric of positive Ricci curvature in any dimension $n>5$.
	\end{remark}
	
	\subsection{Non-simply-connected rational homology spheres}
	
	In this subsection we prove Theorem \ref{T:Space_forms}.

	\begin{proof}[{Proof of Theorem \ref{T:Space_forms}}]
		For (1) let $L_k$ be a $(n-1)$-dimensional lens space with $\pi_1(L_k)\cong\Z/k$. Recall that the cohomology of $L_k$ is given as follows:
		\[ H^i(L_k;\Z)\cong\begin{cases}
			\Z,\quad &i=0,n-1,\\
			\Bigslant{\Z}{k\Z},\quad &2\leq i\leq n-2\text{ and }i\text{ even},\\
			0,\quad else.
		\end{cases} \]
		Further, the cup product $H^2(L_k,\Z)\otimes H^{2i}(L_k;\Z)\to H^{2i+2}(L_k;\Z)$ for $1\leq i\leq \frac{n-4}{2}$ is given by the multiplication in the group $\Z/k$.
		
		Let $e\in H^2(L_k;\Z)$ be a generator and define $M_k=\Sigma_e L_k$. Then, by Lemma \ref{L:Twist_susp_topology}, we have $\pi_1(M_k)\cong\pi_1(L_k)\cong\Z/k$. Further, it follows from the Gysin sequence and Lemma \ref{L:Twist_susp_topology} that $M_k$ has trivial cohomology groups in degrees $3\leq i\leq n-2$ and $H^{n-1}(M_k;\Z)\cong \Z/k$. By Poincaré duality and the universal coefficient theorem, it follows that $M_k$ has vanishing homology groups in degrees $2\leq i\leq n-1$.
		
		Finally, by Corollary \ref{C:Twist_sus_conn_sum}, for any $\ell\in\N_0$ the manifold
		\[ M_k\#_\ell(S^2\times S^{n-2}) \]
		admits a Riemannian metric of positive Ricci curvature.
		
		For (2) let $\Sigma^3$ be the Poincaré homology sphere and define $S_n=\Sigma_0\dots\Sigma_0 \Sigma^3$ as the $(n-3)$-fold twisted suspension of $\Sigma^3$. Since $\Sigma^3$ is a homology sphere, also $S_n$ is a homology sphere by \eqref{EQ:Twist_sus_hom} and we have $\pi_1(S_n)\cong\pi_1(\Sigma^3)$ by Lemma \ref{L:Twist_susp_topology}. Finally, since $\Sigma^3$ is a spherical space form, it follows from Corollary \ref{C:Twist_sus_conn_sum} that for any $\ell\in\N_0$ the manifold
		\[ S_n\#_\ell(S^2\times S^{n-2}) \]
		admits a Riemannian metric of positive Ricci curvature.
	\end{proof}
	\begin{remark}
		More generally, since any orientable spherical space form $S_G=S^n/G$ is a rational homology sphere, the twisted suspension $\Sigma_e S_G$ is a rational homology sphere and we have $\pi_1(\Sigma_e S_G)\cong\pi_1(S_G)\cong G$ by Lemma \ref{L:Twist_susp_topology}. The universal cover of this space can be described as follows:
		
		Let $P\to S_G$ be the principal $S^1$-bundle with Euler class $e\in H^2(S_G;\Z)$. If we pull back this bundle along the covering $S^n\to S_G$, we obtain a principal $S^1$-bundle over $S^n$, which is trivial as $H^2(S^n;\Z)=0$. For an embedding $D^n\hookrightarrow S_G$ we then have $|G|$ preimages in $S^n$ of this embedding. Hence, the space $\Sigma_e S_G$ is covered by the space obtained from $S^n\times S^1$ by performing $|G|$ surgeries along local trivializations. By Lemma \ref{L:Twist_susp_multiple} this space is diffeomorphic to
		\[ \Sigma_0 S^n\#_{|G|-1}(S^2\times S^{n-1}). \]
		Since $\Sigma_0 S^n\cong S^{n+1}$, cf.\ \cite[Example 5.3]{GR23}, we obtain that the universal cover of $\Sigma_e S_G$ is given by
		\[ \widetilde{\Sigma_e S_G}\cong\#_{|G|-1}(S^2\times S^{n-1}). \]	
	\end{remark}
	
	\subsection{Manifolds with prescribed third homology}
	
	In this subsection we prove Theorem \ref{T:prescr_G}. The manifold $M_G$ will be constructed as the connected sum of twisted suspensions of $\C P^m$.
	
	\begin{lemma}\label{L:Twist_susp_CPm}
		Let $e\in H^2(\C P^m;\Z)$ and denote by $k\in\N_0$ the divisibility of $e$. If $k\neq 0$, then the cohomology of $\Sigma_e\C P^m$ is given as follows:
		\[ H^i(\Sigma_e\C P^m;\Z)\cong\begin{cases}
			\Z,\quad & i=0,2,2m-1,2m+1,\\
			\Bigslant{\Z}{k\Z},\quad & 4\leq i\leq 2m-2,\text{ and }i\text{ even},\\
			0,\quad & \text{else.}
		\end{cases} \]
	\end{lemma}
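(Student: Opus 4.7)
The plan is to combine Lemma \ref{L:Twist_susp_topology} with a Gysin-sequence computation on a principal $S^1$-bundle over $\C P^{m-1}$.

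First I would dispose of the ``boundary'' degrees using the structural lemma. Since $\C P^m$ is simply-connected, Lemma \ref{L:Twist_susp_topology}(1) implies $\pi_1(\Sigma_e\C P^m)=0$, so $H^0\cong\Z$ and $H^1=0$; part (2) gives $H^2(\Sigma_e\C P^m;\Z)\cong H^2(\C P^m;\Z)\cong\Z$; and Poincaré duality on the closed, orientable $(2m+1)$-manifold $\Sigma_e\C P^m$ yields $H^{2m+1}\cong\Z$. By Lemma \ref{L:Twist_susp_topology}(3), the inclusion induces isomorphisms $H^i(U;\Z)\cong H^i(\Sigma_e\C P^m;\Z)$ for $3\le i\le 2m$, where $U=P\setminus\varphi_\pi(D^{2m}\times S^1)^\circ$, so the remaining task is to compute $H^i(U;\Z)$ in this range.

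The key step is to identify $U$ up to homotopy with a simpler total space. Since $\C P^m\setminus D^{2m,\circ}$ deformation retracts onto $\C P^{m-1}$ (via the standard CW structure on $\C P^m$), and $U$ is the preimage under the bundle projection $\pi$ of this subset, pulling the bundle back along the deformation retraction yields a homotopy equivalence $U\simeq P'$, where $P'\to\C P^{m-1}$ is the principal $S^1$-bundle with Euler class $e|_{\C P^{m-1}}$. Writing $H^2(\C P^m;\Z)=\Z\langle x\rangle$ with $e=\pm kx$, the restricted class is $\pm kx'$, where $x'$ generates $H^2(\C P^{m-1};\Z)$; consequently cup product with $e|_{\C P^{m-1}}$ acts as multiplication by $k$ on each $H^{2j}(\C P^{m-1};\Z)\cong\Z$ for $0\le j\le m-1$.

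Reading off the Gysin sequence for $P'\to\C P^{m-1}$ then produces $H^{2j}(P';\Z)\cong\Z/k$ for $1\le j\le m-1$, vanishing cohomology in odd degrees $1\le i\le 2m-3$, and $H^{2m-1}(P';\Z)\cong\Z$ as the top cohomology of the $(2m-1)$-dimensional manifold $P'$. Assembling these computations with the boundary-degree data gives the claim. The only point that demands a bit of care is the homotopy equivalence $U\simeq P'$, but this is a standard consequence of pulling back a bundle along a deformation retraction, so no genuine obstacle arises.
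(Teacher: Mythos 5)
Your proof is correct and follows essentially the same route as the paper: both compute the cohomology of the restricted bundle over $\C P^m\setminus D^{2m}\simeq\C P^{m-1}$ via the Gysin sequence (the paper also notes the total space is a lens space), and then transfer the answer to $\Sigma_e\C P^m$ using Lemma \ref{L:Twist_susp_topology}. Your version simply spells out the boundary degrees and the homotopy equivalence more explicitly.
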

	\begin{proof}
		Let $P\to \C P^m\setminus D^{2m}$ be the principal $S^1$-bundle with Euler class $e$. It then follows from the Gysin sequence that the cohomology of $P$ is given by
		\[ H^i(P;\Z)\cong\begin{cases}
			\Z,\quad & i=0,2m-1,\\
			\Bigslant{\Z}{k\Z},\quad & 2\leq i\leq 2m-2,\text{ and }i\text{ even},\\
			0,\quad & \text{else.}
		\end{cases} \]
		This can alternatively also be seen from the fact that $\C P^m\setminus D^{2m}$ is homotopy equivalent to $\C P^{m-1}$ and therefore $P$ is homotopy equivalent to a lens space of dimension $(2m-1)$ with fundamental group $\Z/k$. The claim now follows from Lemma \ref{L:Twist_susp_topology}.
	\end{proof}
	\begin{proof}[Proof of Theorem \ref{T:prescr_G}]
		Recall that the group $G$ is given by
		\[ G\cong\bigslant{\Z}{k_1\Z}\oplus\dots\oplus\bigslant{\Z}{k_{\ell_1}\Z}\oplus \Z^{\ell_2}. \]
		Let $e_i\in H^2(\C P^m;\Z)$ be a class of divisibility $k_i$. We define
		\[ M_G=\Sigma_{e_1}\C P^m\#\dots\#\Sigma_{e_{\ell_1}}\C P^m\#_{\ell_2}\Sigma_0(S^2\times S^{2m-2}). \]
		By \cite[Theorem B]{GR23} we have
		\[\Sigma_0(S^2\times S^{2m-2})\cong (S^2\times S^{2m-1})\#(S^3\times S^{2m-2}).  \]
		Hence, by Lemma \ref{L:Twist_susp_CPm} we have
		\[ H^i(M_G;\Z)\cong\begin{cases}
			\Z,\quad & i=0,2m+1,\\
			\Z^\ell,\quad & i=2,2m-1,\\
			\Z^{\ell_2},\quad & i=3,\\
			G,\quad &i=2m-2,\\
			\bigslant{\Z}{k_1\Z}\oplus\dots\oplus\bigslant{\Z}{k_{\ell_1}\Z} ,\quad & 4\leq i\leq 2m-4,\text{ and }i\text{ even},\\
			0,\quad & \text{else.}
		\end{cases} \]
		In particular, by Poincaré duality, we have $H_3(M_G;\Z)\cong G$ and $M_G$ has the rational cohomology ring of $\#_{\ell}(S^2\times S^{2m-1})\#_{\ell_2}(S^3\times S^{2m-2})$.
		
		Finally, by Lemma \ref{L:Twist_sus_conn_sum}, we have that $M_G$ is a twisted suspension of the manifold
		\[ \#_{\ell_1}\C P^m\#_{\ell_2}(S^2\times S^{2m-2}). \]
		By \cite[Theorem C]{Bu19} and \cite[Theorem C]{Re23} each summand admits a core metric as defined in \cite{Bu19} and by \cite[Theorem B]{Bu19} the connected sum of manifolds with core metrics admits a Riemannian metric of positive Ricci curvature. Hence, by Theorem \ref{T:Twist_sus_Ric}, the manifold $M_G$ admits a Riemannian metric of positive Ricci curvature.
	\end{proof}

	\appendix
	
	\section{Cohomogeneity-two torus actions}\label{A:cohom_2}
	
	In this section we recall basic facts on cohomogeneity-two torus actions on closed, simply-connected manifolds and determine the second Betti number from the orbit space data. For basic facts on torus actions of cohomogeneity 2 we refer to \cite{GK14}, \cite{KMP74}, \cite{Oh82}, \cite{Oh83}, \cite{OR70} and the references therein. For a closed, simply-connected manifold $M^n$, $n\geq4$, with an effective action of $T^{n-2}$ we will use the following facts:
	\begin{enumerate}
		\item The only possible isotropy groups are the trivial group, $T^1$ and $T^2$.
		\item The orbit space is an $m$-gon, where $m$ denotes the number of orbits with $T^2$-isotropy. The action over the interior is free, has $T^1$-isotropy over the interior of each edge and $T^2$-isotropy over each vertex.
		\item Let $a_i\in\Z^{n-2}$ denote a primitive vector generating the $T^1$-isotropy over the $i$-th edge. Then each pair $(a_i,a_{i+1})$ and $(a_m,a_1)$ can be extended to a basis of $\Z^{n-2}$ and all vectors $a_1,\dots,a_m$ together generate $\Z^{n-2}$.
		\item Every labelling of vectors on an $m$-gon satisfying the properties of the previous item is induced by an effective action of $T^{n-2}$ on some closed, simply-connected $n$-dimensional manifold. Two such manifolds are equivariantly diffeomorphic if and only if there is a diffeomorphism between their orbit spaces that preserves the labellings (up to sign).
	\end{enumerate}
	
	\begin{proposition}\label{P:cohomo_b2}
		Let $M^n$ be a closed, simply-connected manifold that admits an effective action of a torus $T^{n-2}$. Let $m$ denote the number of orbits with isotropy $T^2$. Then the second Betti number of $M$ is given by
		\[ b_2(M)=m-n+2. \]
		In particular, $m\geq n-2$ and if $m=n-2$, then there is precisely one equivariant diffeomorphism type of such manifolds, up to automorphisms of $T^{n-2}$. 
	\end{proposition}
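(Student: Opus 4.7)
My plan is to compute $b_2(M) = \dim_\Q H^2(M;\Q)$ directly from the orbit-space data of facts~(1)--(4) and then deduce both statements. The central objects are the $m$ characteristic submanifolds $N_i := \pi^{-1}(\overline{e_i})\subset M$, where $\pi\colon M\to P$ is the orbit map and $e_i$ is the $i$th edge of the polygon $P$. Each $N_i$ is the fixed-point set of the circle subgroup $S^1_{a_i}\subset T^{n-2}$ and is therefore a closed orientable codimension-two submanifold; I will write $\alpha_i\in H^2(M;\Q)$ for its Poincaré dual. The key intermediate goal is to show that the $\alpha_i$ generate $H^2(M;\Q)$ subject to exactly the $n-2$ $\Q$-linear relations $\sum_i (a_i)_k\,\alpha_i = 0$ for $k=1,\dots,n-2$, from which $b_2(M)=m-(n-2)$ is immediate.

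The cleanest route is via the Borel equivariant cohomology $H^*_T(M;\Q)$. Since $M$ is simply-connected and $T$ is connected, the Serre spectral sequence of the fibration $M\to ET\times_T M\to BT$ collapses, so fibre restriction yields a surjection $H^*_T(M;\Q)\twoheadrightarrow H^*(M;\Q)$ whose kernel is the ideal generated by $H^{>0}(BT;\Q)$. Because the action is locally standard by fact~(1), a Mayer-Vietoris computation over $P$ using the slice-theorem descriptions of neighbourhoods of its strata yields the face-ring presentation
\[
H^*_T(M;\Q) \cong \Q[v_1,\dots,v_m]\big/\bigl(v_iv_j : i,j\text{ non-adjacent}\bigr),
\]
with $\deg v_i=2$, where $v_i$ corresponds to $N_i$ and each generator $t_k$ of $H^2(BT;\Q)\cong \Q^{n-2}$ pulls back to $\sum_i (a_i)_k v_i$ (here $(a_i)_k$ denotes the $k$th coordinate of $a_i$, and the pullback formula records the fact that the equivariant Euler class of the normal bundle of $N_i$ is $a_i$). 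In degree two the Stanley-Reisner monomial relations are vacuous, so $H^2_T(M;\Q)\cong \Q^m$, and quotienting by the $n-2$ linear relations coming from the $t_k$---which are $\Q$-linearly independent since the $a_i$ span $\Q^{n-2}$ by fact~(3)---gives $b_2(M)=m-(n-2)=m-n+2$.

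The ``in particular'' statements are then immediate corollaries. The inequality $m\geq n-2$ follows from $b_2(M)\geq 0$. If $m=n-2$, the $n-2$ vectors $a_1,\dots,a_{n-2}$ generate $\Z^{n-2}$ by fact~(3) and, having cardinality equal to the rank, form a $\Z$-basis; since automorphisms of $T^{n-2}$ act on these labels as $\mathrm{GL}(n-2,\Z)$, we can arrange $(a_1,\dots,a_{n-2})=(e_1,\dots,e_{n-2})$ after an appropriate automorphism, and fact~(4) then determines the equivariant diffeomorphism type of $M$ uniquely. The main step to nail down will be the Mayer-Vietoris computation of $H^*_T(M;\Q)$ as a face ring; this is standard in toric topology for torus manifolds, and adapts to the present locally standard cohomogeneity-two setting once one writes out the equivariant cohomology of each local model $T^{n-2}\times_{T^2}\C^2$ at a vertex orbit and $T^{n-2}\times_{T^1}(\R\times \C)$ at an edge orbit.
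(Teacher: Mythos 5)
Your approach via the Borel construction has a real gap that makes it fail whenever $n\geq 5$.

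The problem is the claim that ``since $M$ is simply-connected and $T$ is connected, the Serre spectral sequence of $M\to ET\times_T M\to BT$ collapses.'' Simple connectivity of $M$ does not imply collapse (equivariant formality); the standard example is the free circle action on $S^3$, where $H^*_{S^1}(S^3;\Q)\cong H^*(S^2;\Q)$ is not a free $\Q[u]$-module, so the edge map to $H^*(S^3;\Q)$ is not surjective. More to the point, in the present setting equivariant formality definitively fails once $n\geq 5$: the smallest orbits of the $T^{n-2}$-action have $T^2$-isotropy, so have dimension $n-4\geq 1$, hence $M^{T^{n-2}}=\emptyset$. By Borel localization this forces $H^*_T(M;\Q)$ to be a torsion module over $H^*(BT;\Q)$, which is incompatible with it being free, so the spectral sequence does not degenerate and the restriction $H^*_T(M;\Q)\to H^*(M;\Q)$ is not surjective. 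Relatedly, the face-ring presentation you invoke is the one for quasitoric/torus manifolds $M^{2k}$ with $T^k$-action; it has Krull dimension $2$ for an $m$-gon quotient, and for $n>4$ it cannot receive an injection from $\Q[t_1,\dots,t_{n-2}]$, so the ``equivariant Euler class'' relations you want to quotient by do not even sit inside $H^*_T(M;\Q)$ in the way your argument requires. The Mayer--Vietoris computation you cite does not transport from torus manifolds to this cohomogeneity-two, higher-rank-torus setting. (For $n=4$, where the action is the quasitoric case, your computation is essentially correct.)

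The paper's argument avoids equivariant cohomology entirely. It extends the label matrix $A=(a_1\mid\dots\mid a_m)\in\Z^{(n-2)\times m}$ to a unimodular matrix $A'\in\Z^{m\times m}$ via elementary divisors, builds the closed, simply-connected $(m+2)$-manifold $M'$ with $T^m$-action over the $m$-gon labelled by the columns of $A'$, and observes that $T^{m-n+2}$ acts freely on $M'$ with quotient $M$. This gives $b_2(M')=b_2(M)-(m-n+2)$ via the Gysin-type formula for simply-connected principal torus bundles. The proof then shows $b_2(M')=0$: otherwise one could take a simply-connected $S^1$-bundle over $M'$, lift the torus action, and obtain an $m$-gon labelled by $m$ vectors generating $\Z^{m+1}$, which is impossible. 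If you want to keep an equivariant-cohomological flavour, you would need a genuinely different mechanism to compute $H^2(M;\Q)$ when the action has no fixed points; as written the proposal does not establish the formula for $n\geq 5$.
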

	\begin{proof}
		Let $A\in\Z^{(n-2)\times m}$ denote the matrix
		\[ A=(a_1\mid\dots\mid a_m). \]
		We extend $A$ to a unimodular matrix as follows. Since $a_1,\dots,a_m$ generate $\Z^{n-2}$, it follows from the elementary divisor theorem that there exist unimodular matrices $S\in\Z^{(n-2)\times (n-2)}$, $T\in\Z^{m\times m}$ such that
		\[ S^{-1}\cdot A\cdot T=\begin{pmatrix}
			\textrm{id}_{n-2} & 0
		\end{pmatrix}.\]
		Let $S'\in\Z^{m\times m}$ be defined by
		\[ S'=\begin{pmatrix}
			S & 0\\
			0 & \mathrm{id}_{m-n+2}
		\end{pmatrix}.\]
		Then the unimodular matrix
		\[ A'=S'\cdot T^{-1}\in\Z^{m\times m} \]
		is of the form
		\[ A'=\begin{pmatrix}
			A\\
			*
		\end{pmatrix}. \]
		We denote the $i$-th column of $A'$ by $a_i'$ and consider the $m$-gon with labelling $a_i'$ on the $i$-th edge. Let $M'$ denote the closed, simply-connected $(m+2)$-dimensional manifold with an action of $T^m$ inducing this orbit space labelling.
		
		Since each pair $(a_i,a_{i+1})$ and $(a_m,a_1)$ can be extended to a basis of $\Z^{n-2}$, each triple $(a_i',a_{i+1}',e_j)$ and $(a_m',a_1',e_j)$ with $j> n-2$, where $(e_1,\dots,e_m)$ denotes the standard basis of $\Z^m$, can be extended to a basis of $\Z^m$. This implies that the circle subgroups generated by each $e_j$ with $j>n-2$ intersect each isotropy subgroup of $M'$ trivially. Hence, the subtorus $\{0\}\times T^{m-n+2}$ of $T^m$ acts freely on $M'$ with quotient $M$. It follows that
		\[ b_2(M')=b_2(M)-m+n-2, \]
		see e.g.\ \cite[Lemma 2.3]{GR23}.
		
		Furthermore, we have $b_2(M')=0$, since otherwise, again by \cite[Lemma 2.3]{GR23}, we can consider a principal $S^1$-bundle over $M'$ with simply-connected total space $P'$. By lifting the action from $M'$ to $P'$, see \cite[Proposition 6.2]{HY76}, we obtain an effective action of $T^{m+1}$ on $P'$ with a free sub-action of a circle. Since the action is free, dividing out this action does not change the number of vertices in the orbit space. Thus, the orbit space of $P'$ is an $m$-gon labelled by $m$-elements of $\Z^{m+1}$ that generate $\Z^{m+1}$, which is a contradiction.
		
		Finally, if $m=n-2$, then $(a_1,\dots,a_m)$ is a basis of $\Z^m$, so for any other choice of labelling $(b_1,\dots,b_m)$ there exists an automorphism of $T^m$ carrying one into the other.
	\end{proof}

	\bibliographystyle{plainurl}
	\bibliography{References}

\end{document}